\documentclass[11pt,a4paper]{article}
\usepackage[margin=28mm,headheight=14pt]{geometry}
\usepackage[T1]{fontenc}
\usepackage{lmodern,microtype}
\usepackage{amsmath,amssymb,amsthm,mathtools,mathrsfs}
\usepackage{enumitem}
\usepackage{indentfirst,etoolbox,needspace}
\usepackage{xcolor,tikz}
\usepackage{comment}
\usepackage{hyperref}
\hypersetup{colorlinks=true,linkcolor=blue!35!black,citecolor=green!30!black,
 urlcolor=blue!45!black,
 pdftitle={Positivity of supercritical LYZ pairs},
 pdfauthor={Bowen Liu, Mao Sheng, Shing-Tung Yau},
 pdfsubject={Supercritical Leung--Yau--Zaslow equation}}
\allowdisplaybreaks[2]
\numberwithin{equation}{section}
\makeatletter
\renewcommand\paragraph{\@startsection{paragraph}{4}{\z@}%
 {2.5ex \@plus 1ex \@minus .2ex}{1ex \@plus .2ex}%
 {\normalfont\normalsize\bfseries}}
\makeatother
\newtheorem{theorem}{Theorem}[section]
\newtheorem*{theorem*}{Theorem}
\newtheorem{proposition}[theorem]{Proposition}
\newtheorem{lemma}[theorem]{Lemma}
\newtheorem{corollary}[theorem]{Corollary}
\newtheorem{conjecture}[theorem]{Conjecture}
\theoremstyle{definition}
\newtheorem{definition}[theorem]{Definition}
\newtheorem{example}[theorem]{Example}
\newtheorem{remark}[theorem]{Remark}
\newcommand{\R}{\mathbb R}
\newcommand{\C}{\mathbb C}
\newcommand{\croot}{C_{\mathrm{LYZ}}}
\newcommand{\Rpair}[2]{\mathcal R_{#1,#2}}
\newcommand{\OX}{\mathcal O_X}
\DeclareMathOperator{\Amp}{Amp}
\DeclareMathOperator{\Nef}{Nef}
\DeclareMathOperator{\Psef}{Psef}
\DeclareMathOperator{\Cone}{Cone}
\DeclareMathOperator{\PL}{PL}
\DeclareMathOperator{\arccot}{arccot}

\title{Positivity of supercritical LYZ pairs}
\author{Bowen Liu\thanks{\raggedright Yau Mathematical Sciences Center, Tsinghua University,
100084 Beijing, China. Email: \href{mailto:liubw22@mails.tsinghua.edu.cn}{\texttt{liubw22@mails.tsinghua.edu.cn}}.}
\and Mao Sheng\thanks{\raggedright Yau Mathematical Sciences Center, Tsinghua University,
100084 Beijing, China. Yanqi Lake Beijing Institute of Mathematical Sciences
and Applications, 101408 Beijing, China.
Email: \href{mailto:msheng@tsinghua.edu.cn}{\texttt{msheng@tsinghua.edu.cn}}.}
\and Shing-Tung Yau\thanks{\raggedright Yau Mathematical Sciences Center, Tsinghua University,
100084 Beijing, China. Email: \href{mailto:styau@tsinghua.edu.cn}{\texttt{styau@tsinghua.edu.cn}}.}}
\date{}

\begin{document}
\maketitle

\begin{abstract}
We introduce the roots of supercritical
Leung--Yau--Zaslow pairs and prove pointwise strong positivity of the
associated Schur forms.
The resulting LYZ cones
contain the pliant cone on smooth projective threefolds, with
strict inclusion on a Fujino--Sato threefold. We also establish
Hard Lefschetz and Hodge--Riemann relations for the associated
Chern classes on compact K\"ahler manifolds in bidegrees
$1\le p+q\le\dim X$ with $\min\{p,q\}\le1$.
\end{abstract}

\begingroup
\small
\setlength{\parskip}{0pt}
\tableofcontents
\endgroup
\clearpage

\section*{Introduction}
The Leung--Yau--Zaslow (LYZ) equation arises as the mirror counterpart
of the special Lagrangian equation. In the semi-flat setting of
dual torus fibrations \cite{SYZ}, Leung--Yau--Zaslow \cite{LYZ}
showed that the Fourier--Mukai transform translates the special
Lagrangian condition into an equation for a line-bundle connection,
which we call the LYZ equation.

Let $(X,\omega)$ be a compact K\"ahler manifold of dimension $n\ge2$, and let $\alpha$ be a smooth closed real
$(1,1)$-form. In the supercritical phase, the equation takes the form
\[
 \sum_{i=1}^n\arccot\lambda_i(x)=\theta\in(0,\pi),
 \qquad x\in X,
\]
where $\lambda_i(x)$ are the eigenvalues of $\alpha$ with respect to
$\omega$ and $\arccot\colon\R\to(0,\pi)$. We call such a solution
$(\alpha,\omega)$ a supercritical LYZ pair. A pair of cohomology
classes $([\alpha],[\omega])$ is supercritical if it admits such
representatives.

For the solvability of the LYZ equation,
Collins--Jacob--Yau \cite{CJY} conjectured a Nakai--Moishezon type
criterion in the supercritical phase branch and proved it
for surfaces.
Gao Chen \cite{Chen} established a criterion involving uniform
stability along test families. Building on this work,
Chu--Lee--Takahashi \cite[Corollary~1.5]{CLT} proved the conjectured
criterion on projective manifolds in the supercritical phase.
The resulting inequalities express positivity of a sequence of
homogeneous polynomials in $[\alpha]$ and $[\omega]$ on proper
subvarieties.

In this paper, we organize these homogeneous polynomials appearing in
Nakai--Moishezon type criteria as the Chern classes of a collection of
real $(1,1)$-classes. Schur positivity and Hodge--Riemann properties
of these classes provide a new perspective on the positivity
implied by supercriticality.

More precisely, for a cohomological supercritical LYZ pair $([\alpha],[\omega])$
of phase $\theta$, we define its collection of \emph{LYZ root classes} by
\[
 \Rpair{[\alpha]}{[\omega]}:=\{[D_0],\ldots,[D_{n-2}]\},
 \qquad
 [D_j]:=[\alpha]-\cot\frac{\theta+j\pi}{n-1}\,[\omega].
\]
Elementary symmetric and Schur polynomials in these root classes define
classes $c_m(\Rpair{[\alpha]}{[\omega]})$ and $s_\nu(\Rpair{[\alpha]}{[\omega]})$. Up to positive
normalizing constants, the Chern classes are exactly the classes
appearing in the Nakai--Moishezon type criteria
(Corollary~\ref{cor:root-chern-positive}). 

For representatives
$(\alpha,\omega)$ solving the LYZ equation, these Chern forms $c_m(\alpha,\omega)$ are pointwise strongly positive for $1\le m\le n-1$, as follows from \cite[Lemma~8.2]{CJY}. Our first result shows that the Schur forms $s_\nu(\alpha,\omega)$ are also pointwise strongly
positive (Theorem~\ref{thm:root-schur-positive}).  

Fulger--Lehmann \cite{FL} introduced the pliant cone, which is generated by products of Schur classes of
globally generated vector bundles. In
\cite[Sections~6.4--6.5]{FL}, they ask how the resulting cones
compare, particularly for curves, and how other constructions of
positive classes fit into this picture.

Following this approach, we use supercritical LYZ pairs to construct such a cone on every
compact K\"ahler manifold. Let $\croot^m(X)$ be the closed convex
cone generated by products of Schur classes of supercritical LYZ pairs, of total degree $m$.
Theorem~\ref{thm:analytic-cone-inclusions} gives
\[
 S^m\overline{\mathcal K(X)}\subseteq\croot^m(X)
 \subseteq\bigl(\overline{\mathrm{Eff}}_{\mathrm{an}}^{n-m}(X)\bigr)^\vee,
 \qquad 1\le m\le n.
\]
The left-hand cone is generated by products of nef $(1,1)$-classes, and the right-hand cone consists of classes pairing nonnegatively with
every $m$-dimensional analytic subvariety. 

On smooth projective threefolds, our second main result gives a direct
comparison with the pliant cone. To be explicit, we prove
\[
 S^2\Nef^1(X)\subseteq\PL^2(X)
 \subseteq\croot^2(X)\cap N^2(X)_\R
 \subseteq\Nef^2(X)
\]
(Theorem~\ref{thm:pliant-inclusion}). Moreover, we prove that the inclusion of the pliant
cone in the LYZ cone is strict on the smooth projective toric
threefold of Fujino--Sato \cite{FS}. In particular, this gives a
threefold example of the separation between pliant and nef curve
classes considered in \cite[Section~6.4]{FL}.

The proof of the cone inclusion uses the Hodge--Riemann property
of Chern classes of ample vector bundles
\cite[Proposition~3.8]{PRT}. This connection suggests asking whether
the Chern classes of LYZ root classes satisfy analogous Hodge--Riemann relations
or, more generally, a K\"ahler package comprising Hard Lefschetz,
Hodge--Riemann relations, and Lefschetz decomposition.
In this direction, Fu--Yau--Zhang--Zhang
\cite[Proposition~4.1 and Theorem~4.3]{FYZZ} also obtained related
Hodge-index inequalities on $H^{1,1}(X,\R)$, including boundary
cases of the supercritical LYZ equation.

Our third main result proves Hard Lefschetz and the Hodge--Riemann
relations for the Chern classes of LYZ root classes on compact K\"ahler manifolds in
bidegrees $1\le p+q\le n$ with $\min\{p,q\}\le1$
(Theorem~\ref{sp:thm:global}). As a consequence, the K\"ahler package
holds in dimensions three and four
(Corollary~\ref{cor:low-dimensional-package}).
We conjecture that the K\"ahler package holds in arbitrary dimension
(Conjecture~\ref{sp:conj:full-package}).

The paper is organized as follows. Section~\ref{sec:roots}
introduces LYZ root classes and their characteristic classes.
Section~\ref{sec:pliant} proves Schur positivity and compares the
associated cones. Section~\ref{sp:sec:small-phase} establishes
the Hodge--Riemann relations.
Appendix~\ref{app:FS-data} gives the toric intersection data.

\paragraph*{Acknowledgments}
The second named author would like to thank the Chinese Academy of Sciences
for financial support through the Project for Young Scientists in Basic
Research (Grant No.~YSBR-032).

\paragraph*{AI disclosure}
The construction of the LYZ root classes and the ideas of defining the
LYZ cones and studying the Hodge--Riemann properties of supercritical LYZ pairs
originated with the authors. The strict-inclusion example
on the Fujino--Sato threefold was found by GPT-5.6 Sol Pro. The authors
also used AI tools to improve the exposition, and take full responsibility for the paper.

\clearpage
\section{LYZ root classes from supercritical LYZ pairs}\label{sec:roots}
Throughout this section, let $X$ be a compact K\"ahler manifold of
dimension $n\ge2$. We write
$H^{1,1}(X,\R):=H^{1,1}(X)\cap H^2(X,\R)$ for the space of
real $(1,1)$-cohomology classes and denote the K\"ahler cone by $\mathcal K(X)$.

\subsection{The supercritical LYZ equation and its root polynomials}

Let $\alpha$ be a smooth closed real $(1,1)$-form and let $\omega$
be a K\"ahler form on $X$. At each point $x\in X$, let
$\lambda_1(x),\ldots,\lambda_n(x)$ be the eigenvalues of $\alpha$
with respect to $\omega$, that is,
\[
 \omega=\sqrt{-1}\sum_{i=1}^n\mathrm{d}z^i\wedge\mathrm{d}\bar z^i,
 \qquad
 \alpha=\sqrt{-1}\sum_{i=1}^n\lambda_i\,\mathrm{d}z^i\wedge\mathrm{d}\bar z^i.
\]
When the central charge $\int_X([\alpha]+\sqrt{-1}[\omega])^n\ne0$, the
\emph{topological phase} $\theta\in\R/(2\pi\mathbb Z)$ is determined by
\[
 e^{-\sqrt{-1}\theta}\int_X([\alpha]+\sqrt{-1}[\omega])^n
 \in\R_{>0}.
\]
A \emph{real lift} of $\theta$ is a number $\Theta\in\R$ such that
$\theta=\Theta\bmod 2\pi$. For such a lift, the
Leung--Yau--Zaslow (LYZ) equation is
\[
 \sum_{i=1}^n\arccot\lambda_i(x)=\Theta
 \qquad\text{for every }x\in X,
\]
where $\arccot\colon\R\to(0,\pi)$.

\begin{definition}
\leavevmode
\begin{enumerate}[label=(\roman*)]
\item A solution $(\alpha,\omega)$ of the LYZ equation is called a
\emph{supercritical LYZ pair} if $\Theta\in(0,\pi)$.
In this case, we denote the unique lift in $(0,\pi)$ again by $\theta$.
\item A \emph{cohomological supercritical LYZ pair} is a pair
\[
 ([\alpha],[\omega])\in H^{1,1}(X,\R)\times\mathcal K(X)
\]
that admits representatives forming a supercritical LYZ pair.
\end{enumerate}
\end{definition}

For fixed classes $[\alpha]$ and $[\omega]$, we choose a K\"ahler
representative $\omega$ and an initial representative $\alpha_0$ of
$[\alpha]$. Solving the LYZ equation with respect to $\omega$
means finding a smooth real function $\varphi$ such that
\[
 \alpha_\varphi:=\alpha_0+\sqrt{-1}\partial\bar\partial\varphi
\]
forms a supercritical LYZ pair with $\omega$. Chu--Lee--Takahashi \cite[Corollary~1.5]{CLT} proved the following
Nakai--Moishezon type criterion.

\begin{theorem}\label{thm:LYZ-criterion}

Let \(X\) be a smooth projective variety of dimension \(n\), and let
\[
 ([\alpha],[\omega])\in H^{1,1}(X,\R)\times\mathcal K(X),
\]
with $\theta\in(0,\pi)$ satisfying
\begin{equation}\label{eq:LYZ}
\int_X
\left(
\operatorname{Re}([\alpha]+\sqrt{-1}[\omega])^n
-\cot\theta\,
\operatorname{Im}([\alpha]+\sqrt{-1}[\omega])^n
\right)
=0.
\end{equation}
Then the pair $([\alpha],[\omega])$ is a cohomological supercritical LYZ pair
if and only if, for
every irreducible proper subvariety \(Y\subset X\) of dimension
\(1\le m\le n-1\),
\[
\int_Y
\left(
\operatorname{Re}([\alpha]+\sqrt{-1}[\omega])^m
-\cot\theta\,
\operatorname{Im}([\alpha]+\sqrt{-1}[\omega])^m
\right)
>0.
\]
\end{theorem}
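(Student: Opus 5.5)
This statement is quoted from Chu--Lee--Takahashi \cite[Corollary~1.5]{CLT}. The plan is to reduce it to their general result and to Gao Chen's stability criterion \cite{Chen}, rather than to reprove the PDE theory. We treat the two directions separately.

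\emph{Necessity.} Assume $\alpha$ and $\omega$ form a supercritical LYZ pair of phase $\theta$. The first step is pointwise: at each point, diagonalize and expand $\prod_i(\lambda_i+\sqrt{-1})$ and its degree-$m$ analogues. One shows that the supercritical condition $\sum\arccot\lambda_i=\theta<\pi$ makes the real $(m,m)$-form
\[
\operatorname{Re}(\alpha+\sqrt{-1}\omega)^m-\cot\theta\,\operatorname{Im}(\alpha+\sqrt{-1}\omega)^m
\]
positive on every complex $m$-plane, for $1\le m\le n-1$. This is the content of \cite[Lemma~8.2]{CJY}. The key observation is that restricting to an $m$-dimensional subspace yields eigenvalues $\mu_j$ with $\sum_{j\le m}\arccot\mu_j<\theta$. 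This follows from interlacing and the monotonicity of $\arccot$, together with the fact that each omitted direction contributes a positive angle. Hence the argument of $\prod(\mu_j+\sqrt{-1})$ lies in $(0,\theta)$, and
\[
\operatorname{Re}-\cot\theta\operatorname{Im}=\frac{|z|\sin(\theta-\arg z)}{\sin\theta}>0.
\]
The second step integrates this over the regular locus of $Y$, which gives the strict inequality.

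\emph{Sufficiency.} We invoke Chen's theorem \cite{Chen}: a solution exists if and only if uniform positivity holds along test families. We then use the result of Chu--Lee--Takahashi that, on projective $X$, uniform stability reduces to the numerical subvariety inequalities. Their method is an induction on dimension using resolutions of singularities and a mass-concentration / ampleness perturbation argument. We would cite it directly, after verifying three things. First, the normalization \eqref{eq:LYZ} is exactly the statement that $\theta$ is the topological phase. Second, the lift lies in $(0,\pi)$. Third, the inequality for $Y=X$ is replaced by \eqref{eq:LYZ}.

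\emph{Main obstacle.} If one insisted on a self-contained proof, the hardest step would be sufficiency. It requires passing from numerical positivity on subvarieties to a uniform, strict, $\epsilon$-margin version for all test families: a Demailly--P\u{a}un type concentration argument producing a subsolution. For this paper we take that result as given from \cite{CLT}.
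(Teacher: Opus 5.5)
Your proposal is correct and follows the paper: the paper gives no proof of this statement and quotes it from \cite[Corollary~1.5]{CLT}, and your pointwise necessity argument (Cauchy interlacing $\mu_j\ge\lambda_{j+n-m}$, so the restricted angles sum to less than $\theta$ and $\operatorname{Re}-\cot\theta\operatorname{Im}$ is positive on every complex $m$-plane) is the standard one behind \cite[Lemma~8.2]{CJY} and is sound. One small correction: \eqref{eq:LYZ} only says $e^{-\sqrt{-1}\theta}\int_X([\alpha]+\sqrt{-1}[\omega])^n\in\R$, so it fixes the phase modulo $\pi$ rather than being exactly the topological-phase condition. This is harmless for necessity, since a solution makes the central charge nonzero with argument equal to its phase in $(0,\pi)$, which must then equal $\theta$. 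For sufficiency, you are relying on \cite{CLT} being stated with this same normalization.
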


The numerical inequalities in this criterion involve homogeneous
polynomials in $[\alpha]$ and $[\omega]$. We study these polynomials
by first fixing a pair
$([\alpha],[\omega])\in H^{1,1}(X,\R)\times\mathcal K(X)$
and $\theta\in(0,\pi)$. For $0\le m\le n$, we define
the real $(m,m)$-cohomology classes
\begin{equation}
 [P_m]:=\operatorname{Re}([\alpha]+\sqrt{-1}[\omega])^m
 -\cot\theta\,\operatorname{Im}([\alpha]+\sqrt{-1}[\omega])^m.
\end{equation}
We set
\[
 s:=\cot\theta,\qquad [\kappa]:=[\alpha]-s[\omega].
\]

\begin{lemma}\label{lem:Pm-shifted-identity}
For every $0\le m\le n$,
\[
 [P_m]=\operatorname{Im}\left(
 (s+\sqrt{-1})([\kappa]+(s-\sqrt{-1})[\omega])^m\right).
\]
\end{lemma}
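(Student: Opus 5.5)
Since $s=\cot\theta$ and $[\kappa]=[\alpha]-s[\omega]$, we have $[\kappa]+(s-\sqrt{-1})[\omega]=[\alpha]-\sqrt{-1}[\omega]$. The right-hand side of the lemma is therefore $\operatorname{Im}\bigl((s+\sqrt{-1})([\alpha]-\sqrt{-1}[\omega])^m\bigr)$. The plan is to expand this and compare it with the definition of $[P_m]$. All classes are real and the even-degree cohomology ring is commutative, so it is legitimate to treat $[\alpha],[\omega]$ as formal real variables and use the complex algebra of $\R[[\alpha],[\omega]]\otimes\C$ with real and imaginary parts taken coefficientwise.

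Write $Z:=([\alpha]+\sqrt{-1}[\omega])^m=A+\sqrt{-1}B$, where $A=\operatorname{Re}Z$ and $B=\operatorname{Im}Z$ are real classes. Complex conjugation is coefficientwise and multiplicative, so $([\alpha]-\sqrt{-1}[\omega])^m=\bar Z=A-\sqrt{-1}B$. Then
\[
 (s+\sqrt{-1})(A-\sqrt{-1}B)=(sA+B)+\sqrt{-1}(A-sB),
\]
and its imaginary part is $A-sB=\operatorname{Re}Z-\cot\theta\operatorname{Im}Z=[P_m]$. This is exactly the claimed identity.

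No step is a real obstacle. The one point needing care is to justify that conjugation commutes with taking powers in the complexified cohomology ring. This holds because $[\alpha],[\omega]$ are real classes and conjugation is a ring automorphism. The case $m=0$ is consistent: both sides equal $1$, since $\operatorname{Re}1-s\operatorname{Im}1=1$ and $\operatorname{Im}(s+\sqrt{-1})=1$.
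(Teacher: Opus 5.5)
Your proposal is correct and follows the paper's proof essentially verbatim. You rewrite $[\kappa]+(s-\sqrt{-1})[\omega]$ as $[\alpha]-\sqrt{-1}[\omega]$, recognize its $m$-th power as the conjugate of $([\alpha]+\sqrt{-1}[\omega])^m$ (since the classes are real), and read off $\operatorname{Re}-s\operatorname{Im}$ as the imaginary part after multiplying by $s+\sqrt{-1}$.
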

\begin{proof}
Since $[\kappa]+(s-\sqrt{-1})[\omega]=[\alpha]-\sqrt{-1}[\omega]$ and
$[\alpha],[\omega]$ are real, we have
\[
\begin{aligned}
 \operatorname{Im}\left(
 (s+\sqrt{-1})([\kappa]+(s-\sqrt{-1})[\omega])^m\right)
 &=\operatorname{Im}\left(
 (s+\sqrt{-1})\overline{([\alpha]+\sqrt{-1}[\omega])^m}\right)\\
 &=\operatorname{Re}([\alpha]+\sqrt{-1}[\omega])^m
   -s\operatorname{Im}([\alpha]+\sqrt{-1}[\omega])^m\\
 &=[P_m].
\end{aligned}
\]
\end{proof}

With the same parameter $s=\cot\theta$, we define the homogeneous polynomials
\begin{equation}\label{eq:poly}
 P_m(x,y):=\operatorname{Im}\left(
 (s+\sqrt{-1})(x+(s-\sqrt{-1})y)^m\right),\qquad m\ge0.
\end{equation}
Lemma~\ref{lem:Pm-shifted-identity} identifies the cohomology classes above as
$[P_m]=P_m([\kappa],[\omega])$.
In particular, \eqref{eq:LYZ} is the equality $\int_X[P_n]=0$.
We set $p_m(x):=P_m(x,1)$. Differentiating \eqref{eq:poly} gives
\[
 \frac{\partial}{\partial x}P_m(x,y)=mP_{m-1}(x,y).
\]
Setting $y=1$, we obtain
\begin{equation}\label{eq:appell}
 \frac{\mathrm{d}}{\mathrm{d}x}p_m(x)=m p_{m-1}(x),\qquad m\ge1.
\end{equation}
\begin{example}\label{ex:first-root-polynomials}
The first four polynomials are
\[
\begin{aligned}
 p_0(x)&=1, & p_1(x)&=x,\\
 p_2(x)&=x^2-(s^2+1), & p_3(x)&=x^3-3(s^2+1)x-2s(s^2+1).
\end{aligned}
\]
\end{example}

\begin{lemma}
For every \(m\ge1\), the polynomial \(p_m\) has \(m\) simple real
roots, given by
\begin{equation}\label{eq:roots}
\mu_{m,j}
:=
\cot\left(\frac{\theta+j\pi}{m}\right)-s,
\qquad
0\le j\le m-1.
\end{equation}
For $m\ge2$, between each pair of consecutive roots of $p_m$ lies
exactly one root of $p_{m-1}$. Explicitly,
\[
 \mu_{m,j}>\mu_{m-1,j}>\mu_{m,j+1},
 \qquad 0\le j\le m-2.
\]
\end{lemma}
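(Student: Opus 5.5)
We compute the roots of $p_m$ directly from the definition~\eqref{eq:poly}, and then obtain the interlacing from monotonicity of $\cot$ on $(0,\pi)$.

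\textbf{Step 1: roots of $p_m$.} Write $x=\cot\phi-s$ with $\phi\in(0,\pi)$; this parametrizes all of $\R$ bijectively. Then
\[
 x+(s-\sqrt{-1})=\cot\phi-\sqrt{-1}=\frac{\cos\phi-\sqrt{-1}\sin\phi}{\sin\phi}=\frac{e^{-\sqrt{-1}\phi}}{\sin\phi}.
\]
Since $\sin\theta>0$, we also have $s+\sqrt{-1}=e^{\sqrt{-1}\theta}/\sin\theta$. Hence
\[
 p_m(\cot\phi-s)=\frac{\sin(\theta-m\phi)}{\sin\theta\,\sin^m\phi}.
\]
So $x$ is a root exactly when $m\phi\equiv\theta \pmod{\pi}$ with $\phi\in(0,\pi)$. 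The solutions are $\phi=(\theta+j\pi)/m$ for $0\le j\le m-1$, because $0<\theta<\pi$ puts these in $(0,\pi)$ and $j=-1$ or $j=m$ falls outside. These $m$ values of $\phi$ are distinct, and $\cot$ is injective on $(0,\pi)$, so we obtain $m$ distinct real roots $\mu_{m,j}$. The leading coefficient of $p_m$ is $\operatorname{Im}(s+\sqrt{-1})=1$, so $p_m$ has degree exactly $m$. Therefore these are all the roots and each is simple.

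\textbf{Step 2: interlacing.} Since $\cot$ is strictly decreasing on $(0,\pi)$, the inequalities $\mu_{m,j}>\mu_{m-1,j}>\mu_{m,j+1}$ are equivalent to
\[
 \frac{\theta+j\pi}{m}<\frac{\theta+j\pi}{m-1}<\frac{\theta+(j+1)\pi}{m},
\]
with all three angles lying in $(0,\pi)$ for $0\le j\le m-2$.

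The left inequality holds because $\theta+j\pi>0$. The right inequality, after multiplying by $m(m-1)>0$, becomes
\[
 m(\theta+j\pi)<(m-1)(\theta+(j+1)\pi),
\]
which simplifies to $\theta+j\pi<(m-1)\pi$. This holds since $\theta<\pi$ and $j\le m-2$.

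The word \emph{exactly} follows by counting. The $m-1$ roots of $p_{m-1}$ are placed one in each of the $m-1$ open intervals between consecutive roots of $p_m$, so each interval contains exactly one. Alternatively, this is Rolle's theorem applied to~\eqref{eq:appell}.

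\textbf{Main obstacle.} There is no real obstacle here. The one point requiring care is the sign and branch bookkeeping in the identity $x+s-\sqrt{-1}=e^{-\sqrt{-1}\phi}/\sin\phi$, together with checking the range of $j$ against the constraint $\theta\in(0,\pi)$.
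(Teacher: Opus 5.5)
Your proof is correct. Step 1 is the paper's own substitution $x=\cot t-s$, leading to $\sin(\theta-mt)/(\sin\theta\,\sin^m t)$ and the same counting via monicity.

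For the interlacing you take a slightly different route. The paper invokes Rolle's theorem together with $p_m'=mp_{m-1}$ from \eqref{eq:appell}. This puts a root of $p_{m-1}$ in each of the $m-1$ gaps, and counting makes it exactly one. The indexed form $\mu_{m,j}>\mu_{m-1,j}>\mu_{m,j+1}$ then needs the ordering of both root lists, which comes from monotonicity of $\cot$; the paper leaves this implicit.

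You instead compare the angles $(\theta+j\pi)/m<(\theta+j\pi)/(m-1)<(\theta+(j+1)\pi)/m$ directly. After checking they lie in $(0,\pi)$, you apply the decreasing map $\cot$. This proves the indexed inequalities outright, with no derivative identity. The paper's Rolle argument is more structural: it uses only that $p_m$ has $m$ simple real roots and that $p_{m-1}$ is a multiple of its derivative, and it never compares the explicit root formulas. Both arguments are complete.
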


\begin{proof}
For $0<t<\pi$, we substitute $x=\cot t-s$ into \eqref{eq:poly}.
Using $s=\cot\theta$, we obtain
\[
\begin{aligned}
 p_m(\cot t-s)
 &=\operatorname{Im}\bigl((\cot\theta+\sqrt{-1})
                          (\cot t-\sqrt{-1})^m\bigr)\\
 &=\operatorname{Im}\left(
   \frac{\cos\theta+\sqrt{-1}\sin\theta}{\sin\theta}
   \frac{(\cos t-\sqrt{-1}\sin t)^m}{\sin^m t}\right)\\
 &=\frac{\operatorname{Im}e^{\sqrt{-1}(\theta-mt)}}
          {\sin\theta\,\sin^m t}\\
 &=\frac{\sin(\theta-mt)}{\sin\theta\,\sin^m t}.
\end{aligned}
\]
Taking $t=(\theta+j\pi)/m$, $0\le j\le m-1$, gives the $m$
distinct roots in \eqref{eq:roots}. These are all the roots,
since $p_m$ is monic of degree $m$.
Rolle's theorem and \eqref{eq:appell} place exactly one zero of
$p_{m-1}$ between consecutive zeros of $p_m$.
\end{proof}

\subsection{LYZ root classes and their characteristic classes}

The roots of $p_{n-1}$ give the following cohomological construction.

\begin{definition}\label{def:formal-root-bundle}
Let $n\ge2$ and let $\mathcal R:=\{[D_0],\ldots,[D_{n-2}]\}$
be a collection of classes in $H^{1,1}(X,\R)$.
We call $\mathcal R$ a \emph{collection of LYZ root classes} if the classes
$[D_j]$ lie on an affine line with K\"ahler direction
$[\omega]\in\mathcal K(X)$, and there exist
$[\alpha]\in H^{1,1}(X,\R)$ and $\theta\in(0,\pi)$ satisfying
\eqref{eq:LYZ} such that the classes can be ordered as
\begin{equation}\label{eq:abstract-root-positions}
 [D_j]:=[\alpha]-\cot\frac{\theta+j\pi}{n-1}\,[\omega],
 \qquad 0\le j\le n-2.
\end{equation}
If $([\alpha],[\omega])$ can be chosen to be a cohomological supercritical LYZ pair, we call $\mathcal R$ \emph{supercritical}.
\end{definition}

The pair $([\alpha],[\omega])$ and the phase $\theta$ in
Definition~\ref{def:formal-root-bundle} are not a priori determined by
$\mathcal R$. On a surface, we fix a class $[\kappa]$ with
$[\kappa]^2>0$ and a K\"ahler ray.
For each $\theta\in(0,\pi)$, we choose $[\omega]$ on this ray so that
$[\omega]^2=\sin^2\theta\,[\kappa]^2$, and set
$[\alpha]:=[\kappa]+\cot\theta\,[\omega]$.
Then equation \eqref{eq:LYZ} holds and $\mathcal R=\{[\kappa]\}$.
Varying $\theta$ gives infinitely many distinct pairs with the same
root class $[\kappa]$.

\begin{proposition}
On a compact K\"ahler surface $X$, the space of cohomological supercritical LYZ pairs is naturally identified with
\[
 \left\{([\kappa],[\omega],s)\in
 \mathcal K(X)\times\mathcal K(X)\times\R
 \;:\;[\kappa]^2=(1+s^2)[\omega]^2\right\}.
\]
\end{proposition}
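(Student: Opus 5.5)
The plan is to use the correspondence $([\alpha],[\omega])\mapsto([\kappa],[\omega],s)$, with $s=\cot\theta$ and $[\kappa]=[\alpha]-s[\omega]$. Its inverse is $([\kappa],[\omega],s)\mapsto([\kappa]+s[\omega],[\omega])$ with $\theta=\arccot s\in(0,\pi)$. Since $\arccot\colon\R\to(0,\pi)$ is a bijection, this is a bijection between triples $([\alpha],[\omega],\theta)$ and triples $([\kappa],[\omega],s)$. It remains to show two things. First, supercriticality of $([\alpha],[\omega])$ with phase $\theta$ is equivalent to $[\kappa]\in\mathcal K(X)$ together with $[\kappa]^2=(1+s^2)[\omega]^2$. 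Second, the phase is determined by the pair, so that the identification is natural. For $n=2$, Example~\ref{ex:first-root-polynomials} gives $p_2(x)=x^2-(1+s^2)$. By Lemma~\ref{lem:Pm-shifted-identity}, $[P_2]=[\kappa]^2-(1+s^2)[\omega]^2$, so \eqref{eq:LYZ} is exactly the quadratic relation in the statement.

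\emph{Pointwise analysis.} Let $(\alpha,\omega)$ be a supercritical LYZ pair, so $\arccot\lambda_1+\arccot\lambda_2=\theta<\pi$ with both terms positive. Then $\arccot\lambda_i<\theta$ for each $i$, i.e.\ $\lambda_i>\cot\theta=s$. Hence $\kappa:=\alpha-s\omega$ has eigenvalues $\mu_i=\lambda_i-s>0$, so $\kappa$ is a K\"ahler form. Next apply the addition formula $\cot(A+B)=(\cot A\cot B-1)/(\cot A+\cot B)$ with $\cot A=s+\mu_1$, $\cot B=s+\mu_2$ and $\cot(A+B)=s$. This gives $(s+\mu_1)(s+\mu_2)-1=s(2s+\mu_1+\mu_2)$, which simplifies to $\mu_1\mu_2=1+s^2$. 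In form language this reads $\kappa^2=(1+s^2)\omega^2$ pointwise, and integrating gives the cohomological identity again. So $[\kappa]$ is K\"ahler. The phase is also pinned down: the pair $([\kappa],[\omega])$ determines $s$ via $s^2=[\kappa]^2/[\omega]^2-1$ only up to sign. I would therefore record $s$, equivalently $\theta$, as part of the data, which is exactly why $s$ appears as a third coordinate. The surface example preceding the proposition shows that this is genuinely necessary.

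\emph{Converse.} Take K\"ahler classes $[\kappa],[\omega]$ and $s\in\R$ with $[\kappa]^2=(1+s^2)[\omega]^2$. Fix a K\"ahler form $\omega$ and any K\"ahler representative $\kappa_0$. By Yau's solution of the Calabi conjecture there is $\varphi$ with $\kappa:=\kappa_0+\sqrt{-1}\partial\bar\partial\varphi>0$ and $\kappa^2=(1+s^2)\omega^2$; the volume normalization needed for this is precisely the hypothesis. Set $\alpha:=\kappa+s\omega$, whose eigenvalues are $\lambda_i=s+\mu_i$ with $\mu_i>0$ and $\mu_1\mu_2=1+s^2$. Reversing the computation above gives $\cot(\arccot\lambda_1+\arccot\lambda_2)=s$. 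The sum therefore equals $\theta$ or $\theta+\pi$.

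The main obstacle is excluding $\theta+\pi$. For this I would use the elementary fact that $\arccot a+\arccot b<\pi$ if and only if $a+b>0$. This holds because $\arccot b<\pi-\arccot a=\arccot(-a)$ is equivalent to $b>-a$. Here $\lambda_1+\lambda_2=2s+\mu_1+\mu_2\ge2s+2\sqrt{1+s^2}>0$, so the sum lies in $(0,\pi)$ and equals $\theta$. Thus $(\alpha,\omega)$ is a supercritical LYZ pair of phase $\theta$. The two directions are mutually inverse, which gives the claimed identification.
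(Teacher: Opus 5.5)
Your proof is correct and produces the same bijection $([\alpha],[\omega])\mapsto([\alpha]-s[\omega],[\omega],s)$ as the paper, but by a different route.

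The paper argues purely in cohomology and uses the surface solvability criterion of Collins--Jacob--Yau in both directions. In the forward direction it reads off $s=([\alpha]^2-[\omega]^2)/(2[\alpha][\omega])$ from the central charge and obtains $[\kappa]\in\mathcal K(X)$ from the criterion. For the converse it uses the Hodge index theorem to get $[\alpha][\omega]>0$, so that the central charge is $2[\alpha][\omega](s+\sqrt{-1})$ with argument $\theta$, and then applies the criterion again.

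You instead get the forward direction from the pointwise observation $\lambda_i>\cot\theta$. This shows that the form $\kappa=\alpha-s\omega$ is already K\"ahler and satisfies $\kappa^2=(1+s^2)\omega^2$ pointwise, with no existence theorem needed. For the converse you solve this Monge--Amp\`ere equation by Yau's theorem. You rule out the branch $\theta+\pi$ with the pointwise bound $\lambda_1+\lambda_2\ge2s+2\sqrt{1+s^2}>0$, which is the local counterpart of the paper's Hodge-index estimate. In effect you unpack the surface case of the Collins--Jacob--Yau criterion, which is exactly this reduction to Calabi--Yau. Your version is self-contained and yields the stronger pointwise statement for every LYZ representative; the paper's is shorter given the citation.

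One step should be tightened. The statement identifies pairs $([\alpha],[\omega])$, not triples $([\alpha],[\omega],\theta)$, so you need $\theta$ to be a function of the classes. Your remark that $([\kappa],[\omega])$ fixes $s$ only up to sign concerns the inverse map and does not show this. The fact follows from $\lambda_i+\sqrt{-1}=e^{\sqrt{-1}\phi_i}/\sin\phi_i$: pointwise $(\alpha+\sqrt{-1}\omega)^2=e^{\sqrt{-1}\theta}(\sin\phi_1\sin\phi_2)^{-1}\omega^2$. Hence $\int_X([\alpha]+\sqrt{-1}[\omega])^2\in e^{\sqrt{-1}\theta}\R_{>0}$, and $\theta\in(0,\pi)$ is determined by the classes. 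The same identity in your converse shows that the constant $\theta$ you obtain is a lift of the topological phase, as the definition of the LYZ equation requires. The paper carries out this check explicitly; you left it implicit.
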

\begin{proof}
Given a cohomological supercritical LYZ pair $([\alpha],[\omega])$ with phase
$\theta\in(0,\pi)$ satisfying \eqref{eq:LYZ}, its central charge $\int_X([\alpha]+\sqrt{-1}[\omega])^2$
has positive imaginary part $2[\alpha][\omega]$, so the pair determines
\[
 s:=\frac{[\alpha]^2-[\omega]^2}{2[\alpha][\omega]}=\cot\theta,
 \qquad [\kappa]:=[\alpha]-s[\omega].
\]
By the solvability criterion on surfaces
\cite[Proposition~8.5]{CJY}, the class $[\kappa]$ is K\"ahler, and
equation \eqref{eq:LYZ} gives $[\kappa]^2=(1+s^2)[\omega]^2$.

Conversely, given such a triple, we set $[\alpha]:=[\kappa]+s[\omega]$
and $\theta:=\arccot s$. The Hodge index theorem gives
\[
 [\alpha][\omega]=[\kappa][\omega]+s[\omega]^2
 \ge\bigl(\sqrt{1+s^2}+s\bigr)[\omega]^2>0.
\]
Since $[\alpha]^2-[\omega]^2=2s[\alpha][\omega]$, we have
\[
 \int_X([\alpha]+\sqrt{-1}[\omega])^2
 =2[\alpha][\omega](s+\sqrt{-1}),
\]
whose argument is $\theta$.
The class $[\alpha]-\cot\theta\,[\omega]=[\kappa]$ is K\"ahler, so the
solvability criterion shows that $([\alpha],[\omega])$ is a cohomological
supercritical LYZ pair. The two constructions are inverse.
This completes the proof.
\end{proof}

In contrast, for $n\ge3$, the collection of LYZ root classes determines the triple uniquely.

\begin{proposition}
Let $X$ be a compact K\"ahler manifold of dimension $n\ge3$.
There is a natural bijection between triples
$([\alpha],[\omega],\theta)\in H^{1,1}(X,\R)\times\mathcal K(X)\times(0,\pi)$,
with $\theta$ satisfying \eqref{eq:LYZ}, and collections of LYZ root classes on $X$.
\end{proposition}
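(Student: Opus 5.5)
The map from triples to collections is the defining formula \eqref{eq:abstract-root-positions}: to $([\alpha],[\omega],\theta)$ with $\theta$ satisfying \eqref{eq:LYZ} we assign $\{[D_0],\ldots,[D_{n-2}]\}$, $[D_j]=[\alpha]-c_j[\omega]$ with $c_j:=\cot\frac{\theta+j\pi}{n-1}$. By Definition~\ref{def:formal-root-bundle} this map is surjective onto collections of LYZ root classes. So the proof reduces to injectivity: I will recover $\theta$, $[\omega]$ and $[\alpha]$ from the unordered set $\mathcal R$.

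\textbf{Step 1: recovering the ordering.} Since $n\ge3$, there are $n-1\ge2$ roots. The angles $t_j=(\theta+j\pi)/(n-1)$ increase strictly in $(0,\pi)$, so the $c_j$ strictly decrease. Hence all $[D_j]$ are distinct and lie on the line $[\alpha]+\R[\omega]$, with
\[
 [D_{j+1}]-[D_j]=(c_j-c_{j+1})[\omega]
\]
a positive multiple of the Kähler class $[\omega]$. Thus the ordering $[D_0],\ldots,[D_{n-2}]$ is intrinsic: it is the order along the line in which successive differences are Kähler. (The reversed differences are negatives of Kähler classes, hence not Kähler.) This step needs at least two roots, which is where $n\ge3$ enters.

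\textbf{Step 2: recovering $\theta$ and $[\omega]$.} Fix the normalization through the ratios
\[
 r_j:=\frac{[D_{j+1}]-[D_j]}{[D_1]-[D_0]}=\frac{c_j-c_{j+1}}{c_0-c_1},
\]
which are well-defined real numbers because all these classes are proportional. The key point, and the main obstacle, is to show that the data determine $\theta$.

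\begin{itemize}
\item For $n\ge4$, I would prove that $\theta\mapsto (c_1-c_2)/(c_0-c_1)$ is strictly monotone on $(0,\pi)$. Writing $c_j-c_{j+1}=\sin(\pi/(n-1))/(\sin t_j\sin t_{j+1})$, this ratio equals $\sin t_0/\sin t_2$. The angles $t_0<t_2$ lie in $(0,\pi)$ and move together as $\theta$ varies, with $t_2-t_0=2\pi/(n-1)$ fixed. Differentiating, the logarithmic derivative is $(\cot t_0-\cot t_2)/(n-1)>0$, so the ratio is strictly increasing. Therefore $\theta$ is determined.
\item For $n=3$ there is only one difference, so this ratio is unavailable and I must use \eqref{eq:LYZ} instead. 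Write $[\alpha]=[D_0]+c_0[\omega]$ and $[\omega]=\lambda^{-1}([D_1]-[D_0])$ with $\lambda=c_0-c_1>0$. Then \eqref{eq:LYZ}, which reads $\int_X P_3([\kappa],[\omega])=0$, becomes an equation in $\theta$ alone, since the factor $\lambda$ is a homogeneous scale and factors out. I would substitute the root factorization: with $m=n-1$, the polynomial $P_{n-1}(x,y)$ has roots at $x=\mu_{n-1,j}y$. Up to scaling, $\int_X P_n=0$ is then a relation among $\int [D_0]^a[D_1]^b$. The plan is to show that this scaled relation is strictly monotone in $\theta$, using the explicit form $p_n(\cot t-s)=\sin(\theta-nt)/(\sin\theta\sin^n t)$. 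I expect this to be the hardest step. If it fails, I would fall back on restricting to the stated hypotheses: the recovered data must make $[D_1]-[D_0]$ a Kähler multiple, and strict monotonicity might need a Hodge-index inequality on the intersection numbers.
\end{itemize}

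Once $\theta$ is known, $[\omega]=([D_1]-[D_0])/(c_0-c_1)$ and $[\alpha]=[D_0]+c_0[\omega]$ are determined. This gives injectivity. Naturality holds because both maps are defined by formulas invariant under pullback by isomorphisms.
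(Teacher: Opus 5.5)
Your argument for $n\ge4$ is correct and takes a different route from the paper. The paper never orders the roots. It compares coefficients of $\prod_j(t+[D_j])$ to recover $[\kappa]$ and $[\Omega]^2$, where $[\Omega]=[\omega]/\sin\theta$. It then uses Hard Lefschetz for $[\Omega]+[\widetilde\Omega]$ to get $[\Omega]=[\widetilde\Omega]$, and reads off $\cos\theta$ from the $t^{n-4}$ coefficient. Your approach orders the roots along the line and uses the gap ratio $\sin t_0/\sin t_2$, which is strictly increasing in $\theta$. This is more elementary: it needs neither Hard Lefschetz nor \eqref{eq:LYZ}.

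The genuine gap is the case $n=3$, which you leave as an unexecuted plan, and the tools you point to are not the right ones. The claim that $\lambda$ ``factors out'' is inaccurate. The scale $\lambda=c_0-c_1=2/\sin\theta$ itself depends on $\theta$, and $[\kappa]=[D_0]+(c_0-\cot\theta)[\omega]$ contains the unscaled term $[D_0]$. Also, the scalar identity for $p_n(\cot t-s)$ says nothing directly about the intersection numbers $\int_X[D_0]^a[D_1]^b$.

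The missing observation is that $c_0+c_1=2\cot\theta$ and $c_0-c_1=2/\sin\theta$. Hence $[\kappa]=([D_0]+[D_1])/2=[\alpha]-\cot\theta\,[\omega]$ and $[\Omega]:=([D_1]-[D_0])/2=[\omega]/\sin\theta$ are read off from the collection with no dependence on $\theta$. Substituting $[\omega]=\sin\theta\,[\Omega]$ into $p_3$ from Example~\ref{ex:first-root-polynomials} turns \eqref{eq:LYZ} into $\int_X([\kappa]^3-3[\kappa][\Omega]^2)=2\cos\theta\int_X[\Omega]^3$. This is affine in $\cos\theta$ with coefficient $\int_X[\Omega]^3>0$, since $[\Omega]$ is K\"ahler. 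It therefore determines $\theta\in(0,\pi)$, and then $[\omega]=\sin\theta\,[\Omega]$ and $[\alpha]=[\kappa]+\cos\theta\,[\Omega]$; no Hodge-index fallback is needed. Without this step your proposal does not establish injectivity on threefolds.
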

\begin{proof}
The assignment \eqref{eq:abstract-root-positions} defines the map
from triples to collections of LYZ root classes. It is surjective by
Definition~\ref{def:formal-root-bundle}.

To prove injectivity, suppose that two triples
$([\alpha],[\omega],\theta)$ and
$([\widetilde\alpha],[\widetilde\omega],\widetilde\theta)$ give the same
collection $\mathcal R=\{[D_0],[D_1],\ldots,[D_{n-2}]\}$. We set
\[
\begin{aligned}
 [\kappa]&:=[\alpha]-\cot\theta\,[\omega],
 & [\Omega]&:=\frac{[\omega]}{\sin\theta},\\
 [\widetilde\kappa]&:=[\widetilde\alpha]
   -\cot\widetilde\theta\,[\widetilde\omega],
 & [\widetilde\Omega]&:=\frac{[\widetilde\omega]}{\sin\widetilde\theta}.
\end{aligned}
\]
Both $[\Omega]$ and $[\widetilde\Omega]$ are K\"ahler.
The collection $\mathcal R$ defines
$Q(t):=\prod_{j=0}^{n-2}(t+[D_j])$.
For the first triple, the root factorization and the derivative identity
\eqref{eq:appell} give
\[
 Q(t)=P_{n-1}(t+[\kappa],[\omega]),
 \qquad
 [t^{n-1-m}]Q(t)=\binom{n-1}{m}P_m([\kappa],[\omega]),
\]
where $0\le m\le n-1$ and $[t^k]$ denotes the coefficient of $t^k$.
The same formulas hold for the second triple, with its phase
$\widetilde\theta$. For every $n\ge3$, the formulas in
Example~\ref{ex:first-root-polynomials} show that comparing the coefficients
of $t^{n-2}$ and $t^{n-3}$ for the two triples gives
\[
 [\kappa]=[\widetilde\kappa],
 \qquad
 [\kappa]^2-[\Omega]^2
 =[\widetilde\kappa]^2-[\widetilde\Omega]^2.
\]
Thus $[\Omega]^2=[\widetilde\Omega]^2$.
We set $H:=[\Omega]+[\widetilde\Omega]\in\mathcal K(X)$. Then
\[
 ([\Omega]-[\widetilde\Omega])H^{n-2}
 =([\Omega]^2-[\widetilde\Omega]^2)H^{n-3}=0,
\]
and thus the Hard Lefschetz theorem for $H$ gives $[\Omega]=[\widetilde\Omega]$.

It remains to prove $\theta=\widetilde\theta$.
Suppose first that $n=3$. Since $p_2(x)=x^2-\csc^2\theta$, the root classes are
\begin{equation}\label{eq:threefold-root-classes}
 [D_0]=[\kappa]-[\Omega],
 \qquad
 [D_1]=[\kappa]+[\Omega].
\end{equation}
Using the common classes $[\kappa]$ and $[\Omega]$, the phase equation
\eqref{eq:LYZ} for the two triples becomes
\begin{equation}\label{eq:recover-costheta}
 0=[\kappa]^3-3[\kappa][\Omega]^2
   -2\cos\theta\,[\Omega]^3
  =[\kappa]^3-3[\kappa][\Omega]^2
   -2\cos\widetilde\theta\,[\Omega]^3.
\end{equation}
Subtracting and using $[\Omega]^3>0$ gives
$\cos\theta=\cos\widetilde\theta$.

Now suppose that $n>3$. Comparing the coefficients of $t^{n-4}$ in
$Q(t)$ and cancelling the nonzero factor $\binom{n-1}{3}$ gives
\[
 [\kappa]^3-3[\kappa][\Omega]^2-2\cos\theta\,[\Omega]^3
 =[\kappa]^3-3[\kappa][\Omega]^2-2\cos\widetilde\theta\,[\Omega]^3.
\]
The class $[\Omega]^3$ is nonzero because $\int_X[\Omega]^n>0$.
Subtracting therefore gives $\cos\theta=\cos\widetilde\theta$ in this case as well.
Since $\theta,\widetilde\theta\in(0,\pi)$, we have
$\theta=\widetilde\theta$. Consequently,
\[
 [\omega]=\sin\theta\,[\Omega]=[\widetilde\omega],
 \qquad
 [\alpha]=[\kappa]+\cos\theta\,[\Omega]=[\widetilde\alpha].
\]
The two triples are equal. This proves injectivity.
\end{proof}

For a collection of LYZ root classes $\mathcal R=\{[D_0],\ldots,[D_{n-2}]\}$, we define its
\emph{Chern and Schur classes} by
\[
 c_m(\mathcal R):=e_m([D_0],\ldots,[D_{n-2}]),
 \qquad
 s_\nu(\mathcal R):=s_\nu([D_0],\ldots,[D_{n-2}]),
\]
where $e_m$ is the $m$-th elementary symmetric polynomial and $s_\nu$
is the Schur polynomial associated with a partition $\nu$.
We set
$c_0(\mathcal R):=1$ and $c_m(\mathcal R):=0$ for $m<0$ or $m>n-1$.

\begin{proposition}\label{prop:chern}
Let $\mathcal R$ be the collection of LYZ root classes defined by a triple
$([\alpha],[\omega],\theta)$. We set
$[\kappa]:=[\alpha]-\cot\theta\,[\omega]$. For $0\le m\le n-1$,
\begin{equation}\label{eq:chern}
 c_m(\mathcal R)=\binom{n-1}{m}P_m([\kappa],[\omega]).
\end{equation}
\end{proposition}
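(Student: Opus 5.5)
The plan is to work at the level of polynomials in commuting variables and then substitute classes. All classes involved lie in the commutative subring of $H^{\bullet}(X,\R)$ generated by $[\kappa]$ and $[\omega]$, so any polynomial identity in two commuting variables $x,y$ specializes to an identity of cohomology classes. First I will rewrite the root classes in terms of $[\kappa]$. We have $\cot\frac{\theta+j\pi}{n-1}=\mu_{n-1,j}+s$ by \eqref{eq:roots}, where $s=\cot\theta$. Hence
\[
 [D_j]=[\alpha]-(\mu_{n-1,j}+s)[\omega]=[\kappa]-\mu_{n-1,j}[\omega].
\]

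Next, consider $Q(t):=\prod_{j=0}^{n-2}(t+[D_j])=\prod_j\bigl((t+[\kappa])-\mu_{n-1,j}[\omega]\bigr)$. The polynomial $p_{n-1}$ is monic of degree $n-1$ with simple roots $\mu_{n-1,j}$, so $p_{n-1}(x)=\prod_j(x-\mu_{n-1,j})$. Homogenizing, and noting that $P_{n-1}(x,y)=y^{n-1}p_{n-1}(x/y)$ because $P_{n-1}$ is homogeneous of degree $n-1$, we get the polynomial identity
\[
 P_{n-1}(x,y)=\prod_j(x-\mu_{n-1,j}y)
\]
in $\R[x,y]$. Substituting $x=t+[\kappa]$ and $y=[\omega]$ yields $Q(t)=P_{n-1}(t+[\kappa],[\omega])$.

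Then I extract coefficients. By definition $Q(t)=\sum_m e_m([D_0],\ldots,[D_{n-2}])\,t^{n-1-m}=\sum_m c_m(\mathcal R)t^{n-1-m}$. On the other side, Taylor expand in the first variable. Iterating $\partial_xP_k=kP_{k-1}$ gives
\[
 \partial_x^{k}P_{n-1}=\frac{(n-1)!}{(n-1-k)!}\,P_{n-1-k},
\]
so
\[
 P_{n-1}(x+t,y)=\sum_{k=0}^{n-1}\binom{n-1}{k}t^kP_{n-1-k}(x,y)
\]
as an identity in $\R[x,y,t]$. Taking $k=n-1-m$, the coefficient of $t^{n-1-m}$ is $\binom{n-1}{m}P_m(x,y)$. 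Substituting $x=[\kappa]$ and $y=[\omega]$ and comparing coefficients gives \eqref{eq:chern}. The cases $m=0$ (where both sides equal $1$) and $m=n-1$ are included.

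I expect no real obstacle. The only point needing care is justifying the homogenized factorization of $P_{n-1}$: one should check that $P_{n-1}$ has leading coefficient $1$ in $x$ and is homogeneous. Both facts are immediate from \eqref{eq:poly}, since $\operatorname{Im}\bigl((s+\sqrt{-1})\cdot 1\bigr)=1$.
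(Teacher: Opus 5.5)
Your proposal is correct and follows the paper's proof step for step. You rewrite $[D_j]=[\kappa]-\mu_{n-1,j}[\omega]$ and use the factorization $P_{n-1}(x,y)=\prod_j(x-\mu_{n-1,j}y)$ to get $\prod_j(t+[D_j])=P_{n-1}([\kappa]+t,[\omega])$. You then Taylor-expand via $\partial_xP_k=kP_{k-1}$ and compare coefficients of $t^{n-1-m}$; your check that $P_{n-1}$ is homogeneous and monic in $x$ spells out a step the paper leaves implicit.
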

\begin{proof}
By \eqref{eq:roots} and \eqref{eq:abstract-root-positions}, we have
$P_{n-1}(x,y)=\prod_{j=0}^{n-2}(x-\mu_{n-1,j}y)$ and
$[D_j]=[\kappa]-\mu_{n-1,j}[\omega]$.
Repeated differentiation of \eqref{eq:poly} gives
$\partial_x^{\,n-1-m}P_{n-1}(x,y)
=\frac{(n-1)!}{m!}P_m(x,y)$ for $0\le m\le n-1$.
Thus, for a formal variable $t$, Taylor's formula gives
\[
\begin{aligned}
 \prod_{j=0}^{n-2}(t+[D_j])
 &=P_{n-1}([\kappa]+t,[\omega])\\
 &=\sum_{m=0}^{n-1}
   \frac{\bigl(\partial_x^{\,n-1-m}P_{n-1}\bigr)([\kappa],[\omega])}
        {(n-1-m)!}\,t^{n-1-m}\\
 &=\sum_{m=0}^{n-1}\binom{n-1}{m}
   P_m([\kappa],[\omega])t^{n-1-m}.
\end{aligned}
\]
Comparing the coefficients of $t^{n-1-m}$ gives \eqref{eq:chern}.
\end{proof}

\begin{corollary}\label{cor:root-chern-positive}
Let $X$ be a smooth projective variety, and let
$([\alpha],[\omega])\in H^{1,1}(X,\R)\times\mathcal K(X)$,
with $\theta\in(0,\pi)$ satisfying \eqref{eq:LYZ}.
Let $\mathcal R$ be the collection defined by
\eqref{eq:abstract-root-positions} for this triple.
Then the pair $([\alpha],[\omega])$ is a cohomological supercritical LYZ pair
if and only if $\mathcal R$ is Chern positive, that is,
\[
 \int_Yc_m(\mathcal R)>0
\]
for every irreducible proper subvariety $Y\subset X$ of dimension
$m$, with $1\le m\le n-1$.
\end{corollary}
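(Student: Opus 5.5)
This is a direct combination of Theorem~\ref{thm:LYZ-criterion} with Proposition~\ref{prop:chern}. The plan is to rewrite the positivity conditions of the criterion as positivity of Chern classes of $\mathcal R$.

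First, by Lemma~\ref{lem:Pm-shifted-identity} and the definition of $P_m$, we have
\[
 [P_m]=\operatorname{Re}([\alpha]+\sqrt{-1}[\omega])^m-\cot\theta\,\operatorname{Im}([\alpha]+\sqrt{-1}[\omega])^m=P_m([\kappa],[\omega])
\]
for every $0\le m\le n$, where $[\kappa]=[\alpha]-\cot\theta\,[\omega]$. The triple $([\alpha],[\omega],\theta)$ satisfies \eqref{eq:LYZ}, so it defines a collection of LYZ root classes via \eqref{eq:abstract-root-positions}. Proposition~\ref{prop:chern} then gives, for $0\le m\le n-1$,
\[
 c_m(\mathcal R)=\binom{n-1}{m}[P_m].
\]

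Next, fix an irreducible proper subvariety $Y\subset X$ of dimension $m$ with $1\le m\le n-1$. Integrating the identity above over $Y$ gives
\[
 \int_Y c_m(\mathcal R)=\binom{n-1}{m}\int_Y\Bigl(\operatorname{Re}([\alpha]+\sqrt{-1}[\omega])^m-\cot\theta\,\operatorname{Im}([\alpha]+\sqrt{-1}[\omega])^m\Bigr).
\]
Since $1\le m\le n-1$, the binomial coefficient is strictly positive. Hence the left side is positive exactly when the integral in Theorem~\ref{thm:LYZ-criterion} is positive.

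The hypotheses of the corollary match those of Theorem~\ref{thm:LYZ-criterion}: $X$ is projective, $[\omega]$ is K\"ahler, and \eqref{eq:LYZ} holds. So the pair is a cohomological supercritical LYZ pair if and only if all these integrals are positive, which by the previous step is exactly Chern positivity of $\mathcal R$. There is no real obstacle. The only points to check are that the degree range of Proposition~\ref{prop:chern} ($m\le n-1$) covers all dimensions of proper subvarieties, and that the classes $[P_m]$ built from $([\alpha],[\omega],\theta)$ coincide with the polynomials in the criterion. The latter is exactly the content of Lemma~\ref{lem:Pm-shifted-identity}.
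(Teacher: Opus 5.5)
Your proposal is correct and matches the paper's argument: the paper also deduces the corollary directly from Theorem~\ref{thm:LYZ-criterion} together with the identity \eqref{eq:chern}, $c_m(\mathcal R)=\binom{n-1}{m}P_m([\kappa],[\omega])$, using that the binomial factor is positive for $1\le m\le n-1$.
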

\begin{proof}
This follows from Theorem~\ref{thm:LYZ-criterion} and
\eqref{eq:chern}.
\end{proof}

From now on, for a cohomological supercritical LYZ pair
$([\alpha],[\omega])$, we always choose smooth closed representatives
$(\alpha,\omega)$ solving the LYZ equation with phase
$\theta\in(0,\pi)$. We write $\Rpair{[\alpha]}{[\omega]}$ for the collection of LYZ root
classes and $\Rpair{\alpha}{\omega}:=\{D_0,\ldots,D_{n-2}\}$ for the
collection of LYZ root forms, where we define
\[
 D_j:=\alpha-\cot\frac{\theta+j\pi}{n-1}\,\omega,
 \qquad 0\le j\le n-2,
\]
and their \emph{Chern and Schur forms} by
\[
 c_m(\alpha,\omega):=e_m(D_0,\ldots,D_{n-2}),
 \qquad
 s_\nu(\alpha,\omega):=s_\nu(D_0,\ldots,D_{n-2}).
\]
\section{Positive cones from supercritical LYZ pairs}\label{sec:pliant}
Let $X$ be a compact K\"ahler manifold of dimension $n\ge2$.
We write
\[
 H^{m,m}(X,\R):=H^{m,m}(X)\cap H^{2m}(X,\R)
\]
for the space of real $(m,m)$-cohomology classes.
Motivated by the mixed Schur products defining the pliant cone
\cite[Definition~3.2]{FL}, we consider the corresponding cone of
Schur classes of collections of LYZ root classes in cohomology.

\begin{definition}
For $1\le m\le n$, the $m$-th \emph{LYZ cone} in
$H^{m,m}(X,\R)$ is
\begin{equation}
 \croot^m(X):=\overline{\Cone\!\left\{
 \prod_{i=1}^r s_{\nu^{(i)}}(\Rpair{[\alpha_i]}{[\omega_i]})
 \;\middle|\;
 \begin{gathered}
 ([\alpha_i],[\omega_i])\text{ are cohomological}\\
 \text{supercritical LYZ pairs on }X,\\
 r\ge1,\quad |\nu^{(i)}|\ge1,\\
 \ell(\nu^{(i)})\le n-1,\quad
 \sum_{i=1}^r|\nu^{(i)}|=m
 \end{gathered}\right\}}.
\end{equation}
Here each $\nu^{(i)}$ is a partition. We write $|\nu|$ for the sum of
the parts of $\nu$ and $\ell(\nu)$ for its number of nonzero parts.
\end{definition}

In this section, we first establish pointwise strong positivity of the Schur forms and compare
the LYZ cone with products of nef $(1,1)$-classes and the dual cone
of effective analytic cycle classes. We then study its algebraic part on
projective threefolds, proving that it contains the pliant cone and
that this inclusion is strict on the Fujino--Sato example.

\subsection{Pointwise strong positivity of Schur forms}

Let $(\alpha,\omega)$ be a supercritical LYZ pair with phase
$\theta\in(0,\pi)$, and let $\Rpair{\alpha}{\omega}=\{D_0,\dots,D_{n-2}\}$ be its
collection of LYZ root forms, where 
\[
 D_j=\alpha-q_j\omega,\qquad
 q_j:=\cot\frac{\theta+j\pi}{n-1},\qquad 0\le j\le n-2.
\]
For a partition $\nu=(\nu_1\ge\cdots\ge\nu_\ell>0)$,
let $\nu'$ be its conjugate.
The \emph{Schur forms} are defined by the determinant formula
\[
 s_\nu(\alpha,\omega)
 :=\det\bigl(c_{\nu'_i-i+j}(\alpha,\omega)\bigr)_{1\le i,j\le\nu_1}
 =s_\nu(D_0,\ldots,D_{n-2}).
\]
We set
\[
 F_\nu(x,y):=s_\nu(x-q_0y,\ldots,x-q_{n-2}y).
\]
Then
\begin{equation}\label{eq:schur-root-F}
 s_\nu(\alpha,\omega)=F_\nu(\alpha,\omega).
\end{equation}

At a point $p\in X$, we choose an $\omega$-orthonormal complex
coframe $\mathrm{d}z^1,\ldots,\mathrm{d}z^n$ diagonalizing $\alpha$.
Then
\[
 \omega=\sqrt{-1}\sum_{i=1}^n\mathrm{d}z^i\wedge\mathrm{d}\bar z^i,
 \qquad
 \alpha=\sqrt{-1}\sum_{i=1}^n\lambda_i\,\mathrm{d}z^i\wedge\mathrm{d}\bar z^i,
\]
where $\lambda_i=\cot\phi_i$, $\phi_i>0$, and
$\sum_{i=1}^n\phi_i=\theta$.

Let $\nu\vdash m$ with $\ell(\nu)\le n-1$. For angles
$0<\psi_1,\ldots,\psi_m<\pi$, we define
\begin{equation}\label{eq:Clambda}
C_\nu(\psi_1,\ldots,\psi_m)
:=
[t_1\cdots t_m]\,
F_\nu\left(
\sum_{i=1}^m t_i\cot\psi_i,
\sum_{i=1}^m t_i
\right).
\end{equation}
Here the notation $[t_1\cdots t_m]$ denotes the coefficient of $t_1\cdots t_m$.
In this coframe, \eqref{eq:schur-root-F} gives
\begin{equation}\label{eq:schur-squarefree-expansion}
s_\nu(\alpha,\omega)
=
(\sqrt{-1})^{\,m^2}
\sum_{\substack{I\subset\{1,\ldots,n\}\\|I|=m}}
C_\nu((\phi_i)_{i\in I})\,\mathrm{d}z^I\wedge\mathrm{d}\bar z^I.
\end{equation}
Here $\mathrm{d}z^I:=\bigwedge_{i\in I}\mathrm{d}z^i$ and
$\mathrm{d}\bar z^I:=\overline{\mathrm{d}z^I}$, with indices in increasing order.

To prove pointwise strong positivity of the Schur forms associated with
$\Rpair{\alpha}{\omega}$,
it suffices to show that every coefficient in
\eqref{eq:schur-squarefree-expansion} is nonnegative.
We fix $J\subseteq\{1,\ldots,n\}$ with $|J|=m$ and consider
$C_\nu((\phi_i)_{i\in J})$.
We first prove nonnegativity when these $m$ selected angles are
all equal. Then we use averaging to compare an arbitrary selected
angle tuple with the equal-angle tuple of the same sum.

\subsubsection{Equal-angle case}

Suppose that $\phi_i=\phi$ for every $i\in J$.
Since the selected angles have sum at most $\theta$, we have
$m\phi\le\theta$, and the coefficient to be studied is
$C_\nu(\phi,\ldots,\phi)$.

We define the $(n-1)\times(n-1)$ Hermitian matrix $M$ by
\[
 M_{ij}:=\begin{cases}
 -\cot\theta, & i=j,\\
 -\cot\theta+\sqrt{-1}, & i>j,\\
 -\cot\theta-\sqrt{-1}, & i<j,
 \end{cases}
\]
and, for $0<\phi<\pi$, we set
\[
 K_\phi:=\cos\phi\,I_{n-1}+\sin\phi\,M.
\]

For the rest of this section, we write $A[J]:=(A_{ij})_{i,j\in J}$ for the principal submatrix
indexed by $J$, with indices in increasing order.

\begin{lemma}
The eigenvalues of $M$ are $-q_0,\ldots,-q_{n-2}$, and those of
$K_\phi$ are
\[
 \xi_i(\phi):=\cos\phi-q_{i-1}\sin\phi,
 \qquad 1\le i\le n-1.
\]
For every nonempty $J\subseteq\{1,\ldots,n-1\}$, with $j=|J|$,
and $x,y\in\R$,
\begin{equation}\label{eq:schur-matrix-determinant}
 \det(xI_j+yM[J])
 =\operatorname{Re}(x+\sqrt{-1}y)^j
  -\cot\theta\,\operatorname{Im}(x+\sqrt{-1}y)^j.
\end{equation}
Thus, every principal minor of $K_\phi$ of order $j$ is given by
\begin{equation}\label{eq:K-principal-minor}
 \det K_\phi[J]=\frac{\sin(\theta-j\phi)}{\sin\theta},
 \qquad |J|=j.
\end{equation}
\end{lemma}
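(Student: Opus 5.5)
The plan is to establish the principal-minor formula \eqref{eq:schur-matrix-determinant} first, for all $J$ at once, and then read off everything else from it.

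\textbf{Structure of $M[J]$.} Any principal submatrix $M[J]$ with $|J|=j$ has exactly the same shape as the $j\times j$ version of $M$: diagonal $-\cot\theta$, entries below the diagonal $-\cot\theta+\sqrt{-1}$, entries above $-\cot\theta-\sqrt{-1}$. This is because only the relative order of indices matters. So it suffices to compute $D_j(x,y):=\det(xI_j+yM_j)$, where $M_j$ is the $j\times j$ model.

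\textbf{Rank-one decomposition.} Write $M_j=-\cot\theta\,\mathbf 1\mathbf 1^T+\sqrt{-1}\,S$, where $S$ is the skew matrix with $+1$ below and $-1$ above the diagonal. Then set
\[
xI+yM_j=\bigl(xI+\sqrt{-1}yS\bigr)-y\cot\theta\,\mathbf 1\mathbf 1^T .
\]
The matrix $xI+\sqrt{-1}yS$ is a Toeplitz matrix with constant strict lower part $a=x+\sqrt{-1}y$ shifted appropriately. Concretely, its entries are $x$ on the diagonal, $\sqrt{-1}y$ below and $-\sqrt{-1}y$ above.

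For a matrix with diagonal $d$, constant lower entry $b$ and constant upper entry $c$ ($b\ne c$), the classical formula gives
\[
\det=\frac{b(d-c)^j-c(d-b)^j}{b-c}.
\]
With $d=x$, $b=\sqrt{-1}y$, $c=-\sqrt{-1}y$, this yields $\operatorname{Re}(x+\sqrt{-1}y)^j$. The same formula applied with $d=x-y\cot\theta$, $b=-y\cot\theta+\sqrt{-1}y$, $c=-y\cot\theta-\sqrt{-1}y$ computes $\det(xI+yM_j)$ directly. Here $b-c=2\sqrt{-1}y$ and $d-c=x+\sqrt{-1}y=:z$, $d-b=\bar z$. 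So
\[
\det=\frac{(-\cot\theta+\sqrt{-1})z^j-(-\cot\theta-\sqrt{-1})\bar z^j}{2\sqrt{-1}}
=\operatorname{Re}z^j-\cot\theta\operatorname{Im}z^j,
\]
which is \eqref{eq:schur-matrix-determinant}. The case $y=0$ follows by continuity. The constant-triangle determinant formula itself is proved by subtracting consecutive rows to reduce to a bidiagonal-plus-corner form, or via the standard argument that $\det$ is affine in a common shift of all entries. I expect this to be the only real computation, and it is routine.

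\textbf{Eigenvalues.} Taking $J$ to be all indices, the characteristic polynomial is
\[
\det(xI+M)=\operatorname{Re}(x+\sqrt{-1})^{n-1}-s\operatorname{Im}(x+\sqrt{-1})^{n-1}=p_{n-1}(x+s)
\]
by Lemma~\ref{lem:Pm-shifted-identity}-type conjugation, i.e. the polynomial $P_{n-1}(x+s,1)$ up to conjugation. Its roots are $x=\mu_{n-1,i}-s\cdot(-1)$. More carefully, the roots $x+s=\mu_{n-1,i}+\dots$ give $x=q_i$ after simplification: $\det(xI+M)=0$ iff $x=q_i$, so the eigenvalues of $M$ are $-q_i$. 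An alternative is to verify directly via the angle substitution $x=\cot t$, using the computation $\operatorname{Re}e^{-\sqrt{-1}jt}-\cot\theta\operatorname{Im}e^{-\sqrt{-1}jt}\propto\sin(\theta-jt)/\sin\theta$. Since $K_\phi$ is a polynomial in $M$, its eigenvalues are $\cos\phi-q_{i-1}\sin\phi$.

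\textbf{Minor formula.} For \eqref{eq:K-principal-minor}, substitute $x=\cos\phi$, $y=\sin\phi$, so that $z=e^{\sqrt{-1}\phi}$. This gives
\[
\cos j\phi-\cot\theta\sin j\phi=\frac{\sin(\theta-j\phi)}{\sin\theta}.
\]
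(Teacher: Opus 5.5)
Your argument is correct and is essentially the paper's. The constant-diagonal, constant-triangle determinant formula you quote is exactly what the paper derives by observing that $\det(T+ty\mathbf 1\mathbf 1^*)$ is affine in $t$ and interpolating between its two triangular specializations; the eigenvalues and principal minors are then read off from the resulting identity just as you do.

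Do fix the sign slips in the eigenvalue paragraph. First, $\det(xI+M)=p_{n-1}(x-s)$, not $p_{n-1}(x+s)$. Second, the angle substitution should use $\cot t+\sqrt{-1}=e^{\sqrt{-1}t}/\sin t$, not $e^{-\sqrt{-1}jt}$. This gives $\det(\cot t\,I+M)=\sin(\theta-(n-1)t)/(\sin\theta\,\sin^{n-1}t)$, whose zeros $t=(\theta+k\pi)/(n-1)$, $0\le k\le n-2$, yield exactly $x=q_k$.
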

\begin{proof}
We write $\mathbf1:=(1,\ldots,1)^t\in\R^j$ and decompose
$xI_j+yM[J]=T+(-\cot\theta+\sqrt{-1})y\mathbf1\mathbf1^*$,
where $T$ is upper triangular, with diagonal $x-\sqrt{-1}y$ and
entries $-2\sqrt{-1}y$ above the diagonal.
By multilinearity in the columns, the determinant $\det(T+ty\mathbf1\mathbf1^*)$ is affine in $t$. Indeed, every term involving at least two factors of $t$ vanishes, since the corresponding columns are identical. 

At $t=0$ its value is $(x-\sqrt{-1}y)^j$. At $t=2\sqrt{-1}$, the matrix is lower triangular, with determinant $(x+\sqrt{-1}y)^j$. Thus,
\[
\begin{aligned}
\det(T+ty\mathbf1\mathbf1^*)
&=(x-\sqrt{-1}y)^j+\frac{t}{2\sqrt{-1}}\bigl((x+\sqrt{-1}y)^j-(x-\sqrt{-1}y)^j\bigr).
\end{aligned}
\]
Taking $t=-\cot\theta+\sqrt{-1}$, we obtain
\[
\begin{aligned}
\det(xI_j+yM[J])
&=\frac{1}{2}(1+\sqrt{-1}\cot\theta)(x+\sqrt{-1}y)^j+\frac{1}{2}(1-\sqrt{-1}\cot\theta)(x-\sqrt{-1}y)^j\\
&=\operatorname{Re}(x+\sqrt{-1}y)^j
-\cot\theta\,\operatorname{Im}(x+\sqrt{-1}y)^j,
\end{aligned}
\]
which is \eqref{eq:schur-matrix-determinant}. For $j=n-1$ and $y=1$, its zeros in
$x$ are precisely $q_0,\ldots,q_{n-2}$, which gives the eigenvalues of
$M$ and hence of $K_\phi$. Substituting $(x,y)=(\cos\phi,\sin\phi)$ gives
$\det K_\phi[J]=\cos(j\phi)-\cot\theta\sin(j\phi)$,
which is \eqref{eq:K-principal-minor}.
\end{proof}

By homogeneity and \eqref{eq:Clambda}, the eigenvalue formula gives
\begin{equation}\label{eq:F-K}
 C_\nu(\phi,\ldots,\phi)
 =m!F_\nu(\cot\phi,1)
 =\frac{m!}{\sin^m\phi}
 s_\nu\bigl(\xi_1(\phi),\ldots,\xi_{n-1}(\phi)\bigr).
\end{equation}
Moreover, \eqref{eq:K-principal-minor} is nonnegative for
$j\le\min\{m,n-1\}$, since $0\le\theta-j\phi<\pi$.
The principal-minor criterion therefore makes every principal
submatrix of $K_\phi$ of size at most $m$ positive semidefinite.
The equal-angle case now reduces to the following linear algebra fact.

\begin{lemma}\label{lem:local-schur-positivity}
Let $K$ be an $(n-1)\times(n-1)$ Hermitian matrix with eigenvalues
$\xi_1,\ldots,\xi_{n-1}$, and let $\nu\vdash m$ with
$\ell(\nu)\le n-1$. If every principal submatrix of $K$ of size
at most $\min\{m,n-1\}$ is positive semidefinite, then
$s_\nu(\xi_1,\ldots,\xi_{n-1})\ge0$.
\end{lemma}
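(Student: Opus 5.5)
The plan is to express $s_\nu(\xi_1,\ldots,\xi_{n-1})$ as a nonnegative combination of immanants of small positive semidefinite matrices built from entries of $K$, and then apply Schur's classical theorem that immanants of positive semidefinite Hermitian matrices are nonnegative.

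\textbf{Step 1: Frobenius formula.} Write
\[
 s_\nu(\xi)=\frac1{m!}\sum_{\sigma\in S_m}\chi_\nu(\sigma)\,p_{\mathrm{type}(\sigma)}(\xi),
\]
where $\chi_\nu$ is the irreducible character of $S_m$ and $p_\mu$ are the power sums. Since $K$ is Hermitian, $p_k(\xi)=\operatorname{tr}K^k$. Hence $p_{\mathrm{type}(\sigma)}(\xi)=\prod_{\text{cycles}}\operatorname{tr}K^{|c|}$, and this equals $\operatorname{tr}(K^{\otimes m}\circ P_\sigma)$ on $(\C^{n-1})^{\otimes m}$, where $P_\sigma$ permutes the tensor factors. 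Expanding this trace in the standard basis gives
\[
 p_{\mathrm{type}(\sigma)}(\xi)=\sum_{\mathbf i=(i_1,\ldots,i_m)\in\{1,\ldots,n-1\}^m}\ \prod_{a=1}^m K_{i_a i_{\sigma(a)}}.
\]
Here I take the convention for $\sigma$ versus $\sigma^{-1}$ that matches the immanant below; it is harmless because $\chi_\nu(\sigma)=\chi_\nu(\sigma^{-1})$.

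\textbf{Step 2: regroup as immanants.} Exchanging the two sums gives
\[
 s_\nu(\xi)=\frac1{m!}\sum_{\mathbf i}\operatorname{Imm}_\nu\bigl(K\langle\mathbf i\rangle\bigr),
 \qquad
 \operatorname{Imm}_\nu(A):=\sum_{\sigma\in S_m}\chi_\nu(\sigma)\prod_{a}A_{a\sigma(a)},
\]
where $K\langle\mathbf i\rangle:=(K_{i_ai_b})_{1\le a,b\le m}$ is a principal submatrix in which indices may repeat.

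\textbf{Step 3: each $K\langle\mathbf i\rangle$ is positive semidefinite.} Fix $\mathbf i$ and let $S=\{i_1,\ldots,i_m\}$ be its set of distinct entries, so $|S|\le\min\{m,n-1\}$. Let $V$ be the $|S|\times m$ matrix whose column $a$ is the standard basis vector $e_{i_a}$. Then $K\langle\mathbf i\rangle=V^{*}K[S]V$. By hypothesis $K[S]$ is positive semidefinite, and therefore so is $K\langle\mathbf i\rangle$.

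\textbf{Step 4: apply Schur's theorem.} Schur's inequality states that $\operatorname{Imm}_\nu(A)\ge\chi_\nu(1)\det A\ge0$ for every positive semidefinite Hermitian $A$. Alternatively, $\operatorname{Imm}_\nu(A)$ is, up to a positive factor, the trace of $A^{\otimes m}$ composed with the central idempotent of $\nu$; this idempotent is an orthogonal projection commuting with $A^{\otimes m}\succeq0$, so the trace is nonnegative. Applying this to each $K\langle\mathbf i\rangle$ makes every summand in Step 2 nonnegative, which gives $s_\nu(\xi)\ge0$.

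The main point requiring care is the bookkeeping in Steps 1 and 2: the trace identity $p_{\mathrm{type}(\sigma)}=\operatorname{tr}(K^{\otimes m}P_\sigma)$ and the matching of index conventions with the immanant. Once the formula in Step 2 is established, the rest is a formal consequence of Schur's theorem together with the observation that repeated-index submatrices are congruent images of genuine principal submatrices of size at most $\min\{m,n-1\}$.
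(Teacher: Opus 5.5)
Your proof is correct and follows the same route as the paper. The paper uses the immanant expansion $m!\,s_\nu(\xi)=\sum_{\mathbf i}\operatorname{Imm}_\nu(K\langle\mathbf i\rangle)$, which it cites from Okounkov--Olshanski and you derive from the Frobenius formula. It then writes each repeated-index matrix as a congruence image of a principal submatrix of size at most $\min\{m,n-1\}$ and applies Schur's immanant inequality, exactly as you do.

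One correction to the optional alternative in your Step 4. The full trace of $A^{\otimes m}e_\nu$ equals $\chi_\nu(1)\,s_\nu(\operatorname{spec}A)$, not a positive multiple of $\operatorname{Imm}_\nu(A)$. What you want instead is the diagonal matrix coefficient $\langle v,A^{\otimes m}e_\nu v\rangle$ at $v=e_1\otimes\cdots\otimes e_m$, which equals $\frac{\chi_\nu(1)}{m!}\operatorname{Imm}_\nu(A)$. This coefficient is nonnegative because $A^{\otimes m}e_\nu=e_\nu A^{\otimes m}e_\nu$ is positive semidefinite.
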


\begin{proof}
Let $\chi^\nu$ be the irreducible character of the symmetric group
$S_m$ indexed by $\nu$. For an $m\times m$ matrix $A$, we define
its $\nu$-immanant by
\[
 \operatorname{Imm}_\nu(A)
 :=\sum_{\sigma\in S_m}\chi^\nu(\sigma)
   \prod_{i=1}^m A_{i,\sigma(i)}.
\]
For a positive definite Hermitian matrix $A$, Schur's immanant
inequality \cite[Theorem~1.1]{Haiman93} states that
\[
 \operatorname{Imm}_\nu(A)
 \ge\chi^\nu(\mathrm{id})\det A>0,
\]
where $\chi^\nu(\mathrm{id})$ is the dimension of the representation
and is therefore positive.
For $A\ge0$, applying this inequality to $A+\varepsilon I_m$ and
letting $\varepsilon\to0^+$ gives
$\operatorname{Imm}_\nu(A)\ge0$ by continuity.

For an ordered list of indices $I=(i_1,\ldots,i_m)$ with
$1\le i_1,\ldots,i_m\le n-1$, allowing repetitions, define
\[
 K_I:=(K_{i_a,i_b})_{a,b=1}^m.
\]
The immanant expansion of Schur polynomials
\cite[Proposition~2.4 and Remark~2.6]{OO} gives
\begin{equation}\label{eq:schur-equal-angles}
 m!\,s_\nu(\xi_1,\ldots,\xi_{n-1})
 =\sum_{\substack{I=(i_1,\ldots,i_m)\\
                  1\le i_1,\ldots,i_m\le n-1}}
 \operatorname{Imm}_\nu(K_I).
\end{equation}
Fix such an $I$, and list its distinct indices as
$j_1<\cdots<j_r$. Set $J=\{j_1,\ldots,j_r\}$ and define
$R\in\mathbb R^{m\times r}$ by
\[
 R_{ab}:=
 \begin{cases}
  1, & i_a=j_b,\\
  0, & i_a\ne j_b.
 \end{cases}
\]
Then $K_I=R K[J]R^*$. Since $r\le\min\{m,n-1\}$, the matrix $K[J]$ is positive
semidefinite by hypothesis, and so is $K_I$.
Hence $\operatorname{Imm}_\nu(K_I)\ge0$ for every $I$.
Every summand in \eqref{eq:schur-equal-angles} is nonnegative, proving the claim.
\end{proof}

\begin{corollary}\label{cor:schur-equal-angle}
Let $\nu\vdash m$ with $\ell(\nu)\le n-1$ and let
$0<\phi<\pi$ satisfy $m\phi\le\theta$. Then
\[
 C_\nu(\phi,\ldots,\phi)\ge0.
\]
\end{corollary}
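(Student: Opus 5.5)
The corollary follows by assembling the three facts just established. By \eqref{eq:F-K},
\[
 C_\nu(\phi,\ldots,\phi)=\frac{m!}{\sin^m\phi}\,s_\nu\bigl(\xi_1(\phi),\ldots,\xi_{n-1}(\phi)\bigr),
\]
where $\xi_1(\phi),\ldots,\xi_{n-1}(\phi)$ are the eigenvalues of the Hermitian matrix $K_\phi=\cos\phi\,I_{n-1}+\sin\phi\,M$. Since $0<\phi<\pi$, the prefactor $m!/\sin^m\phi$ is positive. It therefore suffices to show that $s_\nu$ evaluated at the eigenvalues of $K_\phi$ is nonnegative. For this we apply Lemma~\ref{lem:local-schur-positivity} to $K=K_\phi$.

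To check the hypothesis of that lemma, let $J\subseteq\{1,\ldots,n-1\}$ with $1\le j=|J|\le\min\{m,n-1\}$. Then \eqref{eq:K-principal-minor} gives $\det K_\phi[J]=\sin(\theta-j\phi)/\sin\theta$. Here $\sin\theta>0$ because $\theta\in(0,\pi)$. From $j\le m$ and $m\phi\le\theta$ we get $j\phi\le\theta$, so $\theta-j\phi\in[0,\theta)\subset[0,\pi)$, and hence $\sin(\theta-j\phi)\ge0$. Thus every principal minor of $K_\phi$ of order at most $\min\{m,n-1\}$ is nonnegative.

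The one step needing care is passing from nonnegative principal minors to positive semidefiniteness of the principal submatrices. For a Hermitian matrix this is valid only when \emph{all} principal minors are nonnegative, not just the leading ones. This is exactly what we have: for any $J$ with $|J|\le\min\{m,n-1\}$, every principal minor of $K_\phi[J]$ is itself a principal minor of $K_\phi$ of order at most $|J|$, hence nonnegative. So each such $K_\phi[J]$ is positive semidefinite.

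Lemma~\ref{lem:local-schur-positivity} then gives $s_\nu(\xi_1(\phi),\ldots,\xi_{n-1}(\phi))\ge0$, and multiplying by the positive prefactor yields $C_\nu(\phi,\ldots,\phi)\ge0$.

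The only remaining point is the derivation of \eqref{eq:F-K}. By homogeneity of $F_\nu$ of degree $m$, the coefficient of $t_1\cdots t_m$ in $F_\nu\bigl((\sum t_i)\cot\phi,\sum t_i\bigr)=(\sum t_i)^mF_\nu(\cot\phi,1)$ is $m!\,F_\nu(\cot\phi,1)$. Moreover,
\[
 F_\nu(\cot\phi,1)=s_\nu(\cot\phi-q_0,\ldots,\cot\phi-q_{n-2})=\sin^{-m}\phi\;s_\nu(\xi_1(\phi),\ldots,\xi_{n-1}(\phi)),
\]
since $\xi_i(\phi)=\sin\phi\,(\cot\phi-q_{i-1})$ and $s_\nu$ is homogeneous of degree $m$. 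This completes the argument.
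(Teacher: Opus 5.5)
Your proof is correct and is the paper's argument. You combine \eqref{eq:F-K} with the nonnegativity of the principal minors \eqref{eq:K-principal-minor} for orders $j\le\min\{m,n-1\}$, pass to positive semidefiniteness via the all-principal-minors criterion, and apply Lemma~\ref{lem:local-schur-positivity} to $K_\phi$; your rederivation of \eqref{eq:F-K} by homogeneity is an accurate expansion of a step the paper states briefly.
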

\begin{proof}
We apply Lemma~\ref{lem:local-schur-positivity} to $K_\phi$ and use
\eqref{eq:F-K}.
\end{proof}

\subsubsection{Averaging the phase angles}

Let $\psi_1,\ldots,\psi_m>0$ have sum $S\le\theta$.
By Corollary~\ref{cor:schur-equal-angle}, $C_\nu(S/m,\ldots,S/m)$ is nonnegative.
We will prove the following inequality by averaging:
\[
 C_\nu(\psi_1,\ldots,\psi_m)
 \ge C_\nu(S/m,\ldots,S/m).
\]

\begin{definition}
The Young diagram of a partition $\nu$ has $\nu_i$ boxes in row $i$,
with rows aligned on the left. For partitions $\mu$ and $\nu$, we write
$\mu\nearrow\nu$ if the diagram of $\nu$ is obtained from that of
$\mu$ by adding one box. We denote this box by $\nu/\mu$.
If it lies in row $i$ from the top and column $j$ from the left,
both numbered from $1$, its \emph{content} is
\[
 c(\nu/\mu):=j-i.
\]
\end{definition}

\Needspace{12\baselineskip}
\begin{example}
For $\nu=(3,1)$, the partitions $\mu\nearrow\nu$ are $(2,1)$ and
$(3)$. The shaded box is the added box $\nu/\mu$, and its number
is its content.
\begin{center}
\begin{tikzpicture}[x=0.55cm,y=0.55cm,line width=0.5pt,font=\small]
 \draw (0,0) rectangle (2,1);
 \draw (1,0)--(1,1);
 \draw (0,-1) rectangle (1,0);
 \draw[->] (2.5,0.25)--(3.5,0.25);
 \fill[blue!15] (6,0) rectangle (7,1);
 \draw (4,0) rectangle (7,1);
 \draw (5,0)--(5,1) (6,0)--(6,1);
 \draw (4,-1) rectangle (5,0);
 \node at (6.5,0.5) {$2$};
 \node at (1,-1.65) {$\mu=(2,1)$};
 \node at (5.5,-1.65) {$\nu=(3,1)$};
 \begin{scope}[xshift=5.4cm]
  \draw (0,0) rectangle (3,1);
  \draw (1,0)--(1,1) (2,0)--(2,1);
  \draw[->] (3.5,0.25)--(4.5,0.25);
  \fill[blue!15] (5,-1) rectangle (6,0);
  \draw (5,0) rectangle (8,1);
  \draw (6,0)--(6,1) (7,0)--(7,1);
  \draw (5,-1) rectangle (6,0);
  \node at (5.5,-0.5) {$-1$};
  \node at (1.5,-1.65) {$\mu=(3)$};
  \node at (6.5,-1.65) {$\nu=(3,1)$};
 \end{scope}
\end{tikzpicture}
\end{center}
The added boxes lie at $(i,j)=(1,3)$ and $(2,1)$, respectively,
so their contents are $3-1=2$ and $1-2=-1$.
\end{example}

\begin{lemma}\label{lem:schur-translation}
With $F_\varnothing:=1$, for every nonempty partition $\nu$ with $\ell(\nu)\le n-1$,
\[
 \partial_xF_\nu
 =\sum_{\mu\nearrow\nu}
  \bigl(n-1+c(\nu/\mu)\bigr)F_\mu,
\]
where the coefficients
\[
 n-1+c(\nu/\mu)=n-1+\nu_i-i\ge n-i\ge1
\]
are positive, with $i$ the row of the removed box $\nu/\mu$.
\end{lemma}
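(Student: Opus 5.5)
The plan is to reduce the statement to a classical identity for Schur polynomials under a simultaneous translation of all variables. By definition,
\[
 F_\nu(x,y)=s_\nu(x-q_0y,\ldots,x-q_{n-2}y),
\]
so $\partial_x F_\nu$ is the derivative of $s_\nu(z_0+x,\ldots,z_{n-2}+x)$ in the common shift $x$. Here the $N:=n-1$ variables are $z_j:=-q_jy$. By the chain rule this derivative equals
\[
 \Bigl(\sum_{k=0}^{n-2}\partial_{z_k}s_\nu\Bigr)(x-q_0y,\ldots,x-q_{n-2}y).
\]
So it suffices to prove the polynomial identity
\[
 \sum_{k=1}^{N}\frac{\partial s_\nu}{\partial z_k}(z_1,\ldots,z_N)
 =\sum_{\mu\nearrow\nu}\bigl(N+c(\nu/\mu)\bigr)s_\mu(z_1,\ldots,z_N)
\]
in $N$ variables, for $\ell(\nu)\le N$. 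Then we substitute $z_k=x-q_{k-1}y$.

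To prove this identity I would use the bialternant formula $s_\nu=a_{\nu+\delta}/a_\delta$, where $\delta=(N-1,\ldots,0)$ and $a_\lambda=\det(z_i^{\lambda_j})$. The operator $E:=\sum_k\partial_{z_k}$ is the infinitesimal generator of $z\mapsto z+t\mathbf 1$. It kills $a_\delta$, since the Vandermonde determinant is translation invariant. Hence
\[
 E s_\nu=\frac{E\,a_{\nu+\delta}}{a_\delta}.
\]
Next, $E$ is a derivation and acts on each row of the determinant. Applying it to the monomial determinant lowers one exponent $\lambda_j=\nu_j+N-j$ at a time, with coefficient $\lambda_j$. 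This gives
\[
 E\,a_{\nu+\delta}=\sum_j(\nu_j+N-j)\,a_{\nu+\delta-e_j}.
\]
When $\nu-e_j$ is not a partition, the term $a_{\nu+\delta-e_j}$ has two equal exponents, so it vanishes. When $j=\ell(\nu)+1,\dots$ the coefficient is $\nu_j+N-j=N-j$, and it also vanishes via repeated exponents, except possibly at $j=N$, where the exponent is already $0$. That last term is absent because the derivative of a constant column is zero. The surviving terms are exactly $\mu=\nu-e_i\nearrow\nu$, each with coefficient $\nu_i+N-i$. The removed box lies in row $i$ and column $\nu_i$, so its content is $\nu_i-i$ and the coefficient is $N+c(\nu/\mu)$. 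Dividing by $a_\delta$ gives the identity.

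Two details need care. First, the case $\nu$ a single box must yield $\partial_xF_{(1)}=N\cdot F_\varnothing$ with the convention $F_\varnothing=1$. This holds since $s_{(1)}=\sum z_k$, consistent with content $0$. Second, every $\mu$ appearing still satisfies $\ell(\mu)\le N$, so the $F_\mu$ are defined. The positivity claim is immediate: $\nu_i\ge1$ for a removable box, so
\[
 N+\nu_i-i\ge N+1-i=n-i\ge1,
\]
because $i\le\ell(\nu)\le n-1$.

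The main obstacle I expect is the bookkeeping in the derivative of the alternant. Specifically, one must check that the non-partition terms vanish by coinciding exponents. If this becomes messy, I would instead use the known identity $E\,s_\nu=\sum_{\mu\nearrow\nu}(N+c(\nu/\mu))s_\mu$, which follows from the hook-content formula and Pieri duality. Alternatively, I would verify it through the Jacobi--Trudi determinant using $E h_k=(N+k-1)h_{k-1}$.
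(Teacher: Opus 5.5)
Your proof is correct and follows the paper's route. The paper also uses the chain rule to reduce $\partial_xF_\nu$ to $\sum_k\partial_{z_k}s_\nu$ evaluated at $z_k=x-q_{k-1}y$, and then simply cites the diagonal-derivative formula (Remark~2.4 of GKMK) for the identity you prove. Your bialternant argument is a valid self-contained replacement for that citation: $E a_\delta=0$, $E a_{\nu+\delta}=\sum_j(\nu_j+N-j)\,a_{\nu+\delta-e_j}$, and the non-partition terms are killed by repeated exponents. One small correction: the exponent-lowering formula comes from distributing the derivation $E$ over the columns of $\det(z_i^{\lambda_j})$, which have fixed exponents, not over the rows.
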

\begin{proof}
We apply the diagonal-derivative formula \cite[Remark~2.4]{GKMK}
to $s_\nu(z_1,\ldots,z_{n-1})$ with $z_k:=x-q_{k-1}y$.
The chain rule gives $\partial_xF_\nu=\sum_k\partial_{z_k}s_\nu$.
\end{proof}

\begin{proposition}\label{prop:schur-averaging}
Let $m\ge2$. Suppose that for every partition $\mu\vdash m-1$
with $\ell(\mu)\le n-1$, the coefficient $C_\mu$ is nonnegative
on every positive $(m-1)$-tuple of sum at most $\theta$.
Let $\nu\vdash m$ with $\ell(\nu)\le n-1$, and let
$a,b,\psi_3,\ldots,\psi_m>0$ satisfy
$a+b+\sum_{i=3}^m\psi_i\le\theta$. If $a',b'>0$ satisfy
\[
 a'+b'=a+b,\qquad |a'-b'|\le|a-b|,
\]
then
\begin{equation}\label{eq:averaging-difference}
 C_\nu(a',b',\psi_3,\ldots,\psi_m)
 \le C_\nu(a,b,\psi_3,\ldots,\psi_m).
\end{equation}
\end{proposition}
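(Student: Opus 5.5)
The plan is to make the dependence of $C_\nu$ on the pair $(a,b)$ explicit through polarization, and to reduce the sign of the difference \eqref{eq:averaging-difference} to the sign of a coefficient $C_\mu$ with $\mu\vdash m-1$. The inductive hypothesis then controls that sign, via Lemma~\ref{lem:schur-translation}.

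\emph{Polarization.} Since $F_\nu$ is a homogeneous polynomial of degree $m$ in $(x,y)$, let $\widetilde F_\nu(u_1,\dots,u_m)$ denote its full polarization, a symmetric multilinear form on $\R^2$. Then \eqref{eq:Clambda} reads
\[
C_\nu(\psi_1,\ldots,\psi_m)=m!\,\widetilde F_\nu(u(\psi_1),\ldots,u(\psi_m)),\qquad u(\psi):=(\cot\psi,1)=\frac{e(\psi)}{\sin\psi},
\]
where $e(\psi):=(\cos\psi,\sin\psi)$.

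Fix $\psi_3,\ldots,\psi_m$ and write
\[
G(u,v):=\widetilde F_\nu(u,v,u(\psi_3),\ldots,u(\psi_m)).
\]
This is a symmetric bilinear form with entries $G(e_1,e_1)=A$, $G(e_1,e_2)=B$ and $G(e_2,e_2)=C$.

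\emph{Trigonometric reduction.} Put $\sigma:=a+b\le\theta<\pi$ and $w:=\cos(a-b)\in(\cos\sigma,1]$. Expanding $G(e(a),e(b))$ and using the product-to-sum identities, together with $\sin a\sin b=\tfrac12(w-\cos\sigma)$, I expect to obtain
\[
\frac{C_\nu(a,b,\psi_3,\ldots)}{m!}=\frac{G(e(a),e(b))}{\sin a\sin b}
=(A-C)+\frac{2\,(A\cos\sigma+B\sin\sigma)}{w-\cos\sigma}.
\]
The condition $|a'-b'|\le|a-b|$ with the same sum means that $w$ increases. Therefore \eqref{eq:averaging-difference} follows once $A\cos\sigma+B\sin\sigma\ge0$.

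\emph{Identifying the sign term.} We have
\[
A\cos\sigma+B\sin\sigma=G(e_1,e(\sigma))=\sin\sigma\;G\bigl((1,0),u(\sigma)\bigr),
\]
with $\sin\sigma>0$. Inserting the vector $(1,0)$ into one slot of the polarization is differentiation in $x$. Precisely,
\[
m\,\widetilde F_\nu((1,0),v_2,\ldots,v_m)=\widetilde{(\partial_xF_\nu)}(v_2,\ldots,v_m),
\]
so $G((1,0),u(\sigma))$ is a positive multiple of
\[
[t_1\cdots t_{m-1}]\,(\partial_xF_\nu)\Bigl(\textstyle\sum t_i\cot\psi_i,\sum t_i\Bigr)
\]
evaluated at the angles $(\sigma,\psi_3,\ldots,\psi_m)$.

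By Lemma~\ref{lem:schur-translation}, this equals $\sum_{\mu\nearrow\nu}(n-1+c(\nu/\mu))\,C_\mu(\sigma,\psi_3,\ldots,\psi_m)$ with positive coefficients. Each such $\mu\vdash m-1$ has $\ell(\mu)\le n-1$. The $m-1$ angles $\sigma,\psi_3,\ldots,\psi_m$ are positive with sum at most $\theta$, so the hypothesis gives $C_\mu\ge0$.

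The main obstacle is the bookkeeping in the trigonometric reduction and in the polarization/derivative identity: the normalizing factors $m!$ and $m$ must come out positive. The case $w=1$ and the limit behaviour also need care, since the denominator $w-\cos\sigma$ stays positive because $\sigma<\pi$ and $a,b>0$.
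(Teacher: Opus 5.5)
Your proposal is correct and is essentially the paper's proof in polarized notation. Since $\cot a+\cot b=2\sin\sigma/(w-\cos\sigma)$, your splitting into a constant plus $2(A\cos\sigma+B\sin\sigma)/(w-\cos\sigma)$ is the paper's splitting into a constant plus a coefficient times $\cot a+\cot b$. You then identify that coefficient with $[t_2\cdots t_m]\partial_xF_\nu$ at $(\sigma,\psi_3,\ldots,\psi_m)$ and conclude via Lemma~\ref{lem:schur-translation} and the hypothesis, exactly as the paper does. The one slip is that the constant term should be $A+C$ rather than $A-C$; this is harmless because it does not depend on $w$.
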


\begin{proof}
For convenience, we put $\sigma:=a+b=a'+b'$.
With $\psi_3,\ldots,\psi_m$ fixed, the expression in
\eqref{eq:Clambda} is symmetric and affine in $\cot a,\cot b$ respectively.
Hence we can write
\[
\begin{aligned}
 C_\nu(a,b,\psi_3,\ldots,\psi_m)
 &=A+B(\cot a+\cot b)+D\cot a\cot b\\
 &=(A+D)+(B+D\cot\sigma)(\cot a+\cot b),
\end{aligned}
\]
where the second equality follows from the identity
$\cot a\cot b=1+\cot\sigma(\cot a+\cot b)$, and $A,B,D$ are independent of $a,b$. To prove the desired result, it suffices to show
$B+D\cot\sigma\ge0$ and
$\cot a'+\cot b'\le\cot a+\cot b$.

To prove the first inequality, we define
\[
 P(\lambda_1,\lambda_2)
 :=[t_1\cdots t_m]F_\nu\!\left(
 t_1\lambda_1+t_2\lambda_2+\sum_{i=3}^m t_i\cot\psi_i,
 \sum_{i=1}^m t_i\right).
\]
This is the polynomial in the first two cotangents defining $C_\nu$,
so the same constants $A,B,D$ give
\[
 P(\lambda_1,\lambda_2)
 =A+B(\lambda_1+\lambda_2)+D\lambda_1\lambda_2.
\]
Moreover, $B+D\cot\sigma$ is the coefficient of $u$ in
$P(u,\cot\sigma)$, since
\[
 P(u,\cot\sigma)=(A+B\cot\sigma)+u(B+D\cot\sigma).
\]
To compute this coefficient from the definition of $P$, set
\[
 x:=t_2\cot\sigma+\sum_{i=3}^m t_i\cot\psi_i,
 \qquad y:=\sum_{i=2}^m t_i.
\]
The coefficient of $t_1$ in $F_\nu(x+t_1u,y+t_1)$ is
$u\,\partial_xF_\nu(x,y)+\partial_yF_\nu(x,y)$.
Taking the coefficient of $t_2\cdots t_m$ then gives
\[
 \begin{aligned}
 P(u,\cot\sigma)
 &=[t_1\cdots t_m]F_\nu(x+t_1u,y+t_1)\\
 &=[t_2\cdots t_m]\bigl(
 u\,\partial_xF_\nu(x,y)+\partial_yF_\nu(x,y)\bigr).
 \end{aligned}
\]
Since $x,y$ are independent of $u$, comparing the coefficients of $u$
in these two expressions for $P(u,\cot\sigma)$ yields
\[
 B+D\cot\sigma
 =[t_2\cdots t_m]\partial_xF_\nu(x,y).
\]
Applying Lemma~\ref{lem:schur-translation} and then the definition
\eqref{eq:Clambda} of $C_\mu$, we obtain
\[
 B+D\cot\sigma
 =\sum_{\mu\nearrow\nu}
 \bigl(n-1+c(\nu/\mu)\bigr)
 C_\mu(\sigma,\psi_3,\ldots,\psi_m).
\]
Each weight is positive. The tuple $(\sigma,\psi_3,\ldots,\psi_m)$
has $m-1$ positive entries and sum
$\sigma+\sum_{i=3}^m\psi_i\le\theta$, so every $C_\mu$ in this sum
is nonnegative by the hypothesis. Hence $B+D\cot\sigma\ge0$.

Finally, since $0<\sigma<\pi$,
\[
 \cot a+\cot b
 =\frac{\sin\sigma}
 {\sin^2(\sigma/2)-\sin^2((a-b)/2)}.
\]
The same formula holds for $a',b'$. The condition
$0\le|a'-b'|\le|a-b|<\sigma<\pi$ therefore gives
$\cot a'+\cot b'\le\cot a+\cot b$.
Together with $B+D\cot\sigma\ge0$, this proves
\eqref{eq:averaging-difference}.
\end{proof}

Induction now gives the coefficient inequality for arbitrary angles.

\begin{corollary}\label{cor:schur-barycenter}
Let $\nu\vdash m$ with $\ell(\nu)\le n-1$, and let
$\psi_1,\ldots,\psi_m>0$ have sum $S\le\theta$. Then
\[
 C_\nu(\psi_1,\ldots,\psi_m)
 \ge C_\nu\left(\frac Sm,\ldots,\frac Sm\right)\ge0.
\]
\end{corollary}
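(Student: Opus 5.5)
We argue by induction on $m=|\nu|$. For $m=1$ the only partition is $\nu=(1)$, and the first inequality is an equality with $S/m=\psi_1$. The second inequality is Corollary~\ref{cor:schur-equal-angle} with $\phi=\psi_1\le\theta$. The second inequality $C_\nu(S/m,\ldots,S/m)\ge0$ holds in every degree by the same corollary, since $m\cdot(S/m)=S\le\theta$. So everything reduces to the first inequality.

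Fix $m\ge2$ and assume the statement for all partitions of $m-1$ with length at most $n-1$. In particular $C_\mu\ge0$ on every positive $(m-1)$-tuple of sum at most $\theta$, which is exactly the hypothesis of Proposition~\ref{prop:schur-averaging}. Since $C_\nu$ is symmetric in its arguments, the proposition applies to any pair of coordinates, not just the first two. It says that replacing a pair $(a,b)$ by a pair $(a',b')$ with the same sum and smaller spread does not increase $C_\nu$. The total sum is unchanged, so the constraint $\le\theta$ is preserved and the step can be repeated.

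We run the standard balancing procedure. As long as the tuple is not constant, pick an entry $\psi_i>S/m$ and an entry $\psi_j<S/m$. Replace $\psi_i$ by $S/m$ and $\psi_j$ by $\psi_i+\psi_j-S/m$. The new pair has the same sum and is still positive. Its spread is smaller: the new difference is $|\psi_i+\psi_j-2S/m|$, which is less than $\psi_i-\psi_j$ because both $\psi_i-S/m$ and $S/m-\psi_j$ are positive. Each step fixes one more coordinate at $S/m$, so after at most $m-1$ steps we reach $(S/m,\ldots,S/m)$. By Proposition~\ref{prop:schur-averaging}, $C_\nu$ is non-increasing along this chain, which gives $C_\nu(\psi_1,\ldots,\psi_m)\ge C_\nu(S/m,\ldots,S/m)$.

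The only delicate point is closing the induction correctly. Proposition~\ref{prop:schur-averaging} needs nonnegativity of the lower-degree $C_\mu$, and that comes from the full corollary in degree $m-1$, namely both inequalities combined. So both inequalities have to be carried along together in the inductive statement. No limiting argument is needed, because the balancing procedure terminates in finitely many steps.
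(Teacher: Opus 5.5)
Your proof is correct and follows the paper's argument. You induct on $m$, using Corollary~\ref{cor:schur-equal-angle} for the base case and for the equal-angle bound, and then repeatedly apply Proposition~\ref{prop:schur-averaging} to a pair of entries straddling $S/m$, fixing one coordinate at $S/m$ per step. You also check explicitly the symmetry of $C_\nu$, the positivity of the new entry and the decrease of the spread, which the paper leaves implicit.
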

\begin{proof}
We induct on $m$. For $m=1$, the first inequality is equality and
the second follows from Corollary~\ref{cor:schur-equal-angle}.
Assume the claim for all partitions of degree $m-1$.
The hypotheses of Proposition~\ref{prop:schur-averaging} are then
satisfied. We put $u:=S/m$. Unless all entries equal $u$, we choose
$a<u<b$ among the entries and replace
\[
 (a,b)\longmapsto(u,a+b-u).
\]
At each step, one of the selected angles is replaced by $u$,
and $C_\nu$ does not increase. After finitely many steps, we obtain
\[
 C_\nu(\psi_1,\ldots,\psi_m)
 \ge C_\nu(S/m,\ldots,S/m)\ge0,
\]
where the last inequality is Corollary~\ref{cor:schur-equal-angle}.
This completes the induction.
\end{proof}

\begin{theorem}\label{thm:root-schur-positive}
Let $(\alpha,\omega)$ be a supercritical LYZ pair on a compact
K\"ahler $n$-fold $X$, with $n\ge2$.
For every partition $\nu$ with $1\le|\nu|\le n$ and
$\ell(\nu)\le n-1$, the Schur form $s_\nu(\alpha,\omega)$ is
pointwise strongly positive.
\end{theorem}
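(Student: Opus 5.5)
The plan is to reduce the theorem to the coefficient inequality of Corollary~\ref{cor:schur-barycenter}, applied at each point to the squarefree expansion \eqref{eq:schur-squarefree-expansion}.

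Fix $p\in X$ and an $\omega$-orthonormal coframe $\mathrm{d}z^1,\ldots,\mathrm{d}z^n$ diagonalizing $\alpha$ at $p$. Write $\lambda_i=\cot\phi_i$ with $\phi_i\in(0,\pi)$. The LYZ equation gives $\sum_{i=1}^n\phi_i=\theta$, and since every $\phi_i>0$, any subfamily $(\phi_i)_{i\in I}$ with $|I|=m$ has positive entries and sum $S_I\le\theta$; for $m<n$ the inequality is strict. Set $m=|\nu|$. By \eqref{eq:schur-squarefree-expansion}, $s_\nu(\alpha,\omega)$ at $p$ is a linear combination of the forms $(\sqrt{-1})^{m^2}\mathrm{d}z^I\wedge\mathrm{d}\bar z^I$ with coefficients $C_\nu((\phi_i)_{i\in I})$.

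First I will check that each elementary form $(\sqrt{-1})^{m^2}\mathrm{d}z^I\wedge\mathrm{d}\bar z^I$ is strongly positive. Reordering gives $(\sqrt{-1})^{m^2}\mathrm{d}z^I\wedge\mathrm{d}\bar z^I=\bigwedge_{i\in I}\sqrt{-1}\,\mathrm{d}z^i\wedge\mathrm{d}\bar z^i$, up to the standard sign convention. This is a product of semipositive $(1,1)$-forms of rank one, i.e.\ the form $\sqrt{-1}^{m^2}\beta\wedge\bar\beta$ for the decomposable $\beta=\mathrm{d}z^I$, which is strongly positive by definition. Since strongly positive forms form a convex cone, it then suffices to show $C_\nu((\phi_i)_{i\in I})\ge0$ for every $I$ with $|I|=m$.

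Corollary~\ref{cor:schur-barycenter}, applied with $\psi_k$ the entries of $(\phi_i)_{i\in I}$, gives exactly this, because they are positive with sum at most $\theta$. The hypotheses there ($\nu\vdash m$, $\ell(\nu)\le n-1$) match the theorem, and the case $m=n$ is covered: then $I=\{1,\ldots,n\}$ and $S=\theta$. I should check that Corollary~\ref{cor:schur-equal-angle} allows $m\phi=\theta$. It does, because $\theta-j\phi\in[0,\pi)$ for $j\le\min\{m,n-1\}$.

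The only real obstacle is bookkeeping. One point is the sign convention $(\sqrt{-1})^{m^2}$ in \eqref{eq:schur-squarefree-expansion} and its agreement with the normalization of strong positivity. The other is that the expansion \eqref{eq:schur-squarefree-expansion} really has only squarefree terms, with coefficients given by \eqref{eq:Clambda}. This holds because $\alpha$ and $\omega$ are simultaneously diagonal, so each $D_j$ is a combination of the commuting nilpotent forms $\sqrt{-1}\,\mathrm{d}z^i\wedge\mathrm{d}\bar z^i$. Their squares vanish, so extracting the coefficient of $t_1\cdots t_m$ is exactly the substitution $t_i\mapsto\sqrt{-1}\,\mathrm{d}z^i\wedge\mathrm{d}\bar z^i$.
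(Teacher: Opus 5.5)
Your proposal is correct and follows the paper's proof: fix a point, use the squarefree expansion \eqref{eq:schur-squarefree-expansion}, apply Corollary~\ref{cor:schur-barycenter} to each subtuple $(\phi_i)_{i\in I}$, which is positive with sum at most $\theta$, and conclude because $s_\nu(\alpha,\omega)$ is a nonnegative combination of the strongly positive forms $(\sqrt{-1})^{m^2}\mathrm{d}z^I\wedge\mathrm{d}\bar z^I$. Your additional checks of the sign normalization and of the squarefree substitution $t_i\mapsto\sqrt{-1}\,\mathrm{d}z^i\wedge\mathrm{d}\bar z^i$ are correct; the paper leaves this bookkeeping implicit.
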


\begin{proof}
We fix $p\in X$ and put $m:=|\nu|$.
For every $J\subseteq\{1,\ldots,n\}$ of size $m$, the angles
$(\phi_i)_{i\in J}$ are positive and have sum at most $\theta$.
Corollary~\ref{cor:schur-barycenter} therefore gives
\[
 C_\nu((\phi_i)_{i\in J})\ge0.
\]
By \eqref{eq:schur-squarefree-expansion}, the form $s_\nu(\alpha,\omega)$ is a
nonnegative linear combination of the strongly positive forms
$(\sqrt{-1})^{\,m^2}\mathrm{d}z^J\wedge\mathrm{d}\bar z^J$.
This completes the proof.
\end{proof}

\subsection{LYZ positive cones and comparisons}

We now pass from pointwise positivity to positivity of cohomological cones.
For $0\le k\le n$, let $\overline{\mathrm{Eff}}_{\mathrm{an}}^k(X)$ denote the closed
convex cone in $H^{k,k}(X,\R)$ generated by the cohomology classes
of effective analytic cycles of codimension $k$.
For any cone $\mathcal C$, the notation $S^m\mathcal C$ denotes the closed convex cone generated
by products of $m$ classes in $\mathcal C$.
The dual of $\overline{\mathrm{Eff}}_{\mathrm{an}}^{n-m}(X)$ is taken with respect to
\[
 H^{m,m}(X,\R)\times H^{n-m,n-m}(X,\R)\longrightarrow\R,
 \qquad (\xi,\eta)\longmapsto\int_X\xi\cup\eta.
\]

\begin{theorem}\label{thm:analytic-cone-inclusions}
Let $X$ be a compact K\"ahler manifold of dimension $n\ge2$. Then
\begin{equation}\label{eq:analytic-cone-inclusions}
 S^m\overline{\mathcal K(X)}\subseteq\croot^m(X)
 \subseteq\bigl(\overline{\mathrm{Eff}}_{\mathrm{an}}^{n-m}(X)\bigr)^\vee,
 \qquad 1\le m\le n.
\end{equation}
\end{theorem}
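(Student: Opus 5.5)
The plan is to prove the two inclusions separately. The left one comes from explicit constructions of LYZ pairs whose root classes are a given Kähler class. The right one comes from Theorem~\ref{thm:root-schur-positive} together with integration over subvarieties.

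\emph{Left inclusion.} Since $\croot^m(X)$ is a closed convex cone, it suffices to show that $[\beta_1]\cdots[\beta_m]\in\croot^m(X)$ for Kähler classes $[\beta_i]$. Nef limits, and hence all of $S^m\overline{\mathcal K(X)}$, then follow by closedness. For a single Kähler class $[\beta]$ with Kähler form $\beta$, I construct a supercritical LYZ pair all of whose root forms are close to multiples of $\beta$.

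\begin{itemize}
\item Take $\omega=\varepsilon\beta$ and $\alpha=\beta$. The eigenvalues are all $\lambda=1/\varepsilon$, so $\sum\arccot\lambda_i=n\arccot(1/\varepsilon)$ is constant, and this lies in $(0,\pi)$ for small $\varepsilon$. The pair is therefore an honest supercritical LYZ pair with $\theta=n\arccot(1/\varepsilon)$. The phase is automatically consistent with \eqref{eq:LYZ}: constant angles make $(\alpha+\sqrt{-1}\omega)^n$ have argument exactly $\theta$ pointwise.
\item The root classes are $[D_j]=(1-\varepsilon q_j)[\beta]$ with $q_j=\cot\frac{\theta+j\pi}{n-1}$.
\item Then $s_{(1)}(\mathcal R)=c_1(\mathcal R)=\bigl(\sum_j(1-\varepsilon q_j)\bigr)[\beta]$. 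As $\varepsilon\to0$ we have $\theta\to0$. For $j\ge1$ the terms $\varepsilon q_j$ stay bounded and tend to $0$. For $j=0$, $q_0\approx(n-1)/\theta\approx(n-1)/(n\varepsilon)$, so $\varepsilon q_0\to(n-1)/n$.
\item The coefficient therefore tends to $(n-1)-\frac{n-1}{n}>0$.
\end{itemize}

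Hence $[\beta]$ is a positive multiple of $s_{(1)}$ of a supercritical pair, up to an arbitrarily small error. Taking products $\prod_{i=1}^m s_{(1)}(\Rpair{[\alpha_i]}{[\omega_i]})$ with $r=m$ and all $\nu^{(i)}=(1)$, and passing to the closure, gives $[\beta_1]\cdots[\beta_m]\in\croot^m(X)$.

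A cleaner alternative: directly compute $c_1(\mathcal R)=(n-1)P_1=(n-1)[\kappa]$ by Proposition~\ref{prop:chern}, with $[\kappa]=(1-\varepsilon\cot\theta)[\beta]$. This is a positive multiple of $[\beta]$ whenever $\varepsilon\cot\theta<1$, which I would verify for small $\varepsilon$; it holds in the limit computed above. This avoids limits entirely.

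\emph{Right inclusion.} By closedness and convexity, it suffices to take a generator $\prod_i s_{\nu^{(i)}}(\Rpair{[\alpha_i]}{[\omega_i]})$ and an irreducible analytic subvariety $Y$ of dimension $m$, and show the pairing is nonnegative. Choose representatives $(\alpha_i,\omega_i)$ solving the LYZ equation. By Theorem~\ref{thm:root-schur-positive}, each form $s_{\nu^{(i)}}(\alpha_i,\omega_i)$ is pointwise strongly positive, which uses $|\nu^{(i)}|\le m\le n$ and $\ell(\nu^{(i)})\le n-1$. A wedge product of strongly positive forms is strongly positive, so the product form $\Phi$ is a strongly positive closed $(m,m)$-form. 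Then $\int_X[\Phi]\cup[Y]=\int_{Y_{\mathrm{reg}}}\Phi\ge0$, since the restriction of a (strongly) positive form to a complex submanifold is a nonnegative volume form. Lelong's theorem justifies integrating over $Y_{\mathrm{reg}}$ and identifying the result with the cohomological pairing. Effective cycles are nonnegative combinations of such $Y$, and the dual cone is closed, so the inclusion follows.

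\emph{Main obstacle.} The hardest step is making the left inclusion airtight, namely that $[\beta]$ itself (not just an approximation) arises from a genuine supercritical pair. I would rely on the constant-eigenvalue construction and Proposition~\ref{prop:chern}, checking $1-\varepsilon\cot\bigl(n\arccot(1/\varepsilon)\bigr)>0$ for small $\varepsilon$. This holds because $\cot(n\arccot(1/\varepsilon))\approx 1/(n\varepsilon)$, giving $1-\varepsilon\cot\theta\to 1-\tfrac1n>0$.
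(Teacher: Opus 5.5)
Your proposal is correct and is essentially the paper's proof. The paper also realizes a K\"ahler class $H=[\alpha]$ through the constant-eigenvalue pair $(\alpha,c\,\alpha)$, but fixes $c=\tan(\pi/(2n))$ so that the phase is exactly $\pi/2$, $\cot\theta=0$, and $c_1(\Rpair{[\alpha]}{[\omega]})=(n-1)H$ on the nose; this makes your $\varepsilon$-asymptotics superfluous (there is never any ``small error'', since every root class is an exact multiple of $[\beta]$, and $1-\varepsilon\cot\theta=\sin((n-1)t)/(\cos t\,\sin(nt))>0$ for every admissible $\varepsilon=\tan t$), while your right inclusion is exactly the paper's argument via the pointwise strong positivity of Theorem~\ref{thm:root-schur-positive}.
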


\begin{proof}
Let $H$ be a K\"ahler class. We choose a K\"ahler form $\alpha$
representing it. We set $\omega:=\tan(\pi/(2n))\alpha$.
All eigenangles of $\alpha$ relative to $\omega$ equal $\pi/(2n)$,
so $(\alpha,\omega)$ is a supercritical LYZ pair of phase $\pi/2$.
Proposition~\ref{prop:chern} gives
$c_1(\Rpair{[\alpha]}{[\omega]})=(n-1)H$.
Applying this construction to K\"ahler classes $H_1,\ldots,H_m$ yields
\[
 H_1\cdots H_m
 =\frac1{(n-1)^m}\prod_{i=1}^m c_1(\Rpair{[\alpha_i]}{[\omega_i]})
 \in\croot^m(X).
\]
Taking positive combinations and limits proves the first inclusion.

By Theorem~\ref{thm:root-schur-positive}, every mixed Schur product has a
smooth closed strongly positive representative, which pairs nonnegatively
with effective analytic cycles. This proves the second inclusion.
\end{proof}

\begin{corollary}
Every compact K\"ahler $n$-fold $X$, $n\ge2$, satisfies
$\croot^1(X)=\overline{\mathcal K(X)}$.
\end{corollary}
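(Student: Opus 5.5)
We need to prove $\croot^1(X)=\overline{\mathcal K(X)}$. The plan is to get the two inclusions from Theorem~\ref{thm:analytic-cone-inclusions} with $m=1$, and then close the gap between its outer cone and the nef cone by working directly with $c_1$.

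For the inclusion $\overline{\mathcal K(X)}\subseteq\croot^1(X)$ there is nothing new to do: with $m=1$ we have $S^1\overline{\mathcal K(X)}=\overline{\mathcal K(X)}$, so this is the left inclusion of \eqref{eq:analytic-cone-inclusions}.

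For the reverse inclusion, note first that the right-hand cone $(\overline{\mathrm{Eff}}_{\mathrm{an}}^{n-1}(X))^\vee$ is in general larger than the nef cone on a non-projective Kähler manifold, so it cannot be used directly. Instead I would work at the level of generators. For $m=1$, the only partition with $|\nu|=1$ is $\nu=(1)$, and $r=1$ is forced, so every generator has the form $s_{(1)}(\Rpair{[\alpha]}{[\omega]})=c_1(\Rpair{[\alpha]}{[\omega]})$. By Theorem~\ref{thm:root-schur-positive}, the form $c_1(\alpha,\omega)$ is a pointwise (strongly) positive closed real $(1,1)$-form, i.e.\ a semipositive smooth representative. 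A class with a smooth semipositive representative $\beta$ is nef: for every $\varepsilon>0$ the form $\beta+\varepsilon\omega_0$ is Kähler for any fixed Kähler form $\omega_0$, so the class lies in $\overline{\mathcal K(X)}$. Since $\overline{\mathcal K(X)}$ is a closed convex cone, it also contains the closed convex cone these generators span, which is $\croot^1(X)$.

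The only point requiring care is this identification of classes represented by semipositive smooth forms with elements of the closed Kähler cone, and that argument is immediate. (As a side check, Proposition~\ref{prop:chern} gives $c_1=(n-1)[\kappa]$, so the statement equivalently says that $[\kappa]$ is nef for every supercritical pair. This is consistent with the above: pointwise $c_1(\alpha,\omega)$ has eigenvalues $C_{(1)}(\phi_i)\ge0$.)
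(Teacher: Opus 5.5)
Your proof is correct and follows the paper's generator-level argument: the paper also observes that every generator of $\croot^1(X)$ is $c_1(\mathcal R)=(n-1)[\kappa]$, and it takes the inclusion $\overline{\mathcal K(X)}\subseteq\croot^1(X)$ from Theorem~\ref{thm:analytic-cone-inclusions}. The only difference is the positivity input. You invoke Theorem~\ref{thm:root-schur-positive}, which gives only semipositivity and so needs your $\varepsilon$-perturbation to reach nefness. The paper instead uses the elementary fact that $\kappa=\alpha-\cot\theta\,\omega$ has eigenvalues $\cot\phi_i-\cot\theta>0$, since $0<\phi_i<\theta$, so each generator is in fact a K\"ahler class.
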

\begin{proof}
Every generator of $\croot^1(X)$ can be written as $c_1(\mathcal R)=(n-1)[\kappa]$, where
$[\kappa]$ is K\"ahler. Hence $\croot^1(X)\subseteq\overline{\mathcal K(X)}$.
The reverse inclusion is Theorem~\ref{thm:analytic-cone-inclusions}.
\end{proof}

\subsubsection{The pliant cone and the LYZ cone on threefolds}

We first express the preceding cone inclusions in terms of numerical
cycle classes. Let $X$ be a smooth projective variety of dimension $n$.
We write $N^k(X)_\R$ and $N_k(X)_\R$ for the real spaces of
codimension-$k$ and dimension-$k$ cycles modulo numerical equivalence.
The closed cone generated by effective $k$-dimensional cycles is denoted
by $\Psef_k(X)\subset N_k(X)_\R$. Its dual under the intersection
pairing is the nef cone
$\Nef^k(X):=(\Psef_k(X))^\vee\subset N^k(X)_\R$.
We also write $\Psef^k(X):=\Psef_{n-k}(X)$ under the identification
$N^k(X)_\R=N_{n-k}(X)_\R$.

To relate these numerical cones to cohomology, we set
\[
 H_{\mathrm{alg}}^{m,m}(X,\R)
 :=\operatorname{im}\!\left(
 \mathrm{cl}\colon\mathrm{CH}^m(X)_\R\longrightarrow H^{2m}(X,\R)
 \right),
\]
where $\mathrm{cl}$ is the cycle class map. The canonical surjection
$q\colon H_{\mathrm{alg}}^{m,m}(X,\R)\twoheadrightarrow N^m(X)_\R$
sends an algebraic cohomology class to its numerical class.

\begin{lemma}\label{lem:numerical-cone}
Let $X$ be a smooth projective variety of dimension $n\ge2$.
Then, for $1\le m\le n$,
\begin{equation}\label{rmk:numerical-cone}
 S^m\Nef^1(X)
 \subseteq q\!\left(\croot^m(X)\cap H_{\mathrm{alg}}^{m,m}(X,\R)\right)
 \subseteq\Nef^m(X).
\end{equation}
\end{lemma}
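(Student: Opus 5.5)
The plan is to deduce both inclusions from Theorem~\ref{thm:analytic-cone-inclusions}, using the projection $q$ to transport everything to numerical classes.

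\emph{First inclusion.} Let $H_1,\ldots,H_m$ be ample $\R$-divisor classes. Ample real classes are K\"ahler, since their first Chern classes contain K\"ahler forms and convex combinations preserve this. By the first half of the proof of Theorem~\ref{thm:analytic-cone-inclusions},
\[
 H_1\cdots H_m=\frac{1}{(n-1)^m}\prod_{i=1}^m c_1(\Rpair{[\alpha_i]}{[\omega_i]}),
\]
where $[\alpha_i]=H_i$ and $[\omega_i]=\tan(\pi/(2n))H_i$. This product is a generator of $\croot^m(X)$. It is also the class of an algebraic cycle with real coefficients, so it lies in $H^{m,m}_{\mathrm{alg}}(X,\R)$. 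Applying $q$ places $H_1\cdots H_m$ in the middle set, which is a convex cone because $q$ is linear and the intersection of two convex cones is a convex cone.

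Since $\Nef^1(X)$ is the closure of the ample cone, every generator of $S^m\Nef^1(X)$ is a limit of such products. We therefore need the middle set to be closed. The argument is as follows. The space $H^{m,m}_{\mathrm{alg}}(X,\R)$ is a finite-dimensional linear subspace, so $\croot^m(X)\cap H^{m,m}_{\mathrm{alg}}(X,\R)$ is a closed cone. Its image under $q$ may fail to be closed if the kernel of $q$ meets the cone nontrivially. To avoid this, approximate at the cohomological level rather than the numerical one. Nef classes $N_i$ are limits of ample classes $N_i+\varepsilon A$, so $\prod (N_i+\varepsilon A)\to\prod N_i$ in $H^{m,m}_{\mathrm{alg}}$. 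Here $\prod N_i$ is understood as the cohomology product of classes in $H^{1,1}_{\mathrm{alg}}$. The limit therefore lies in the closed cone $\croot^m(X)\cap H^{m,m}_{\mathrm{alg}}$, and applying $q$ gives $q(\prod N_i)=\prod N_i$ in $N^m$. Convex combinations then finish the first inclusion. A general element of $S^m\Nef^1$, being a limit of such combinations, may still require closure. If the statement intends $S^m$ as a closure, I would handle this by noting that the cone generated by the $\prod N_i$ in $H^{m,m}_{\mathrm{alg}}$ has closure mapping onto a set containing the closure in $N^m$, since a limit in the quotient lifts once one fixes a complement. I expect this closedness bookkeeping to be the only delicate point. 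If it gets stuck, I would interpret the middle term with a closure, as is natural.

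\emph{Second inclusion.} Let $\xi\in\croot^m(X)\cap H^{m,m}_{\mathrm{alg}}$, and let $Z$ be an effective algebraic cycle of dimension $m$. Then $Z$ is in particular an effective analytic cycle of codimension $n-m$. Theorem~\ref{thm:analytic-cone-inclusions} gives $\int_X\xi\cup[Z]\ge0$. The numerical intersection number $q(\xi)\cdot Z$ equals this cohomological pairing by compatibility of the cycle class map with intersection products. So $q(\xi)$ pairs nonnegatively with all effective cycles, and by continuity with $\Psef_m(X)$. Hence $q(\xi)\in\Nef^m(X)$.
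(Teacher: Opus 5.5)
Your second inclusion is correct and matches the paper's argument. The first half of your first inclusion is also fine. Products of ample classes lie in $q\bigl(\croot^m(X)\cap H^{m,m}_{\mathrm{alg}}(X,\R)\bigr)$. So do nonnegative combinations of products of nef classes, by your cohomological approximation $\prod_i(N_i+\varepsilon A)\to\prod_i N_i$ using $N^1(X)_\R=H^{1,1}_{\mathrm{alg}}(X,\R)$.

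The gap is the final closure step, and it is not optional. In the paper $S^m\Nef^1(X)$ is by definition the \emph{closed} convex cone generated by these products. You must therefore show that a limit $\gamma=\lim_j\gamma_j$ of such combinations is still in the image of $q$. Your justification, that a limit in the quotient lifts once one fixes a complement, does not do this. A complement $W$ of $\ker q$ gives a lift $w\in W$ of $\gamma$, but nothing places $w$, or any other lift, in the cone. The cone elements $\xi_j$ with $q(\xi_j)=\gamma_j$ differ from the $W$-lifts of $\gamma_j$ by elements of $\ker q$ that may diverge. Linear images of closed convex cones need not be closed: the cone $z\ge\sqrt{x^2+y^2}$ in $\R^3$ maps under $(x,y,z)\mapsto(x,z-y)$ onto $\{(x,u):u>0\}\cup\{(0,0)\}$. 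As you observed yourself, the obstruction is precisely a nonzero element of $\ker q$ in the cone. Replacing the middle term by its closure would change the statement.

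The missing ingredient is a positivity (mass) bound, which is how the paper concludes. Fix an ample class $H$ with a K\"ahler representative $\omega_H$. Suppose a class $\xi$ is represented by a positive closed $(m,m)$-form $T$. Then for every fixed smooth closed $(n-m,n-m)$-form $\beta$,
\[
 \Bigl|\int_XT\wedge\beta\Bigr|\le C_\beta\int_XT\wedge\omega_H^{n-m}.
\]
Pairing with a fixed dual basis gives $\|\xi\|\le C\int_X\xi\cup H^{n-m}$, and this bound passes to limits. It applies to your lifts $\xi_j$, which are nonnegative combinations of products of K\"ahler classes or limits of these. Moreover $\int_X\xi_j\cup H^{n-m}=\gamma_j\cdot H^{n-m}$ is bounded because $\gamma_j\to\gamma$. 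Hence a subsequence of $\xi_j$ converges to some $\xi$ in the closed cone $\croot^m(X)\cap H^{m,m}_{\mathrm{alg}}(X,\R)$, and continuity of $q$ gives $q(\xi)=\gamma$.

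Equivalently, Theorem~\ref{thm:root-schur-positive} shows the same bound holds on all of $\croot^m(X)$. Thus $\croot^m(X)\cap\ker q=\{0\}$, and the image of $q$ is closed. With this bound your intermediate nef step becomes unnecessary: you can pass directly from combinations of ample products to their limits, as the paper does.
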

\begin{proof}
The right inclusion follows from \eqref{eq:analytic-cone-inclusions}
by intersection with effective cycles.
To prove the left inclusion, let $\gamma\in S^m\Nef^1(X)$.
We choose a sequence $\gamma_j\to\gamma$ of finite nonnegative sums
of products of ample divisor classes. Let $\xi_j$
be the corresponding cohomological classes. By
\eqref{eq:analytic-cone-inclusions},
$\xi_j\in\croot^m(X)\cap H_{\mathrm{alg}}^{m,m}(X,\R)$.
We fix an ample divisor class $H$. Pairing positive smooth representatives
of the classes $\xi_j$
with smooth representatives of a fixed dual cohomology basis gives
\[
 \|\xi_j\|\le C\int_X\xi_j\cup H^{n-m}
 =C\,\gamma_j\cdot H^{n-m}.
\]
Here we use a fixed cohomology norm $\|\cdot\|$, and the constant $C$ is independent of $j$.
The right-hand side is bounded because $\gamma_j\to\gamma$.
Thus a subsequence of $\xi_j$ converges to a class
$\xi\in\croot^m(X)\cap H_{\mathrm{alg}}^{m,m}(X,\R)$.
Continuity of $q$ gives $q(\xi)=\gamma$, proving the left inclusion.
\end{proof}

Now let $X$ be a smooth projective complex threefold.
In degree two, the map $q$ is an isomorphism.
Indeed, for an integral ample divisor class $H$, Hard Lefschetz and the Lefschetz
$(1,1)$ theorem give $H_{\mathrm{alg}}^{2,2}(X,\R)=H\cdot N^1(X)_\R$.
The Hodge index theorem makes the pairing $HAB$ nondegenerate on
$N^1(X)_\R$, so the canonical surjection is injective in this case.
We therefore identify $N^2(X)_\R$ with $H_{\mathrm{alg}}^{2,2}(X,\R)$.
Under this identification, we will strengthen the inclusions \eqref{rmk:numerical-cone}
by proving
$\PL^2(X)\subseteq\croot^2(X)\cap N^2(X)_\R$.

By \cite[Definition~3.2]{FL}, the pliant cone in degree two is
\[
 \PL^2(X):=\overline{\Cone\!\left\{
 s_{(1,1)}(E),\ s_{(2)}(E),\ c_1(E)c_1(F)
 \;:\; E,F\text{ are globally generated bundles}\right\}}.
\]
The following lemma reduces these generators to second Chern classes.

\begin{lemma}\label{lem:rank-reduction}
On a smooth projective threefold $X$,
\[
 \PL^2(X)=\overline{\Cone\{c_2(E):E\text{ is a globally generated vector bundle}\}}.
\]
\end{lemma}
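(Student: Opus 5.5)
We prove the two inclusions separately. Write $\mathcal C:=\overline{\Cone\{c_2(E):E\text{ globally generated}\}}$.

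\emph{Inclusion $\mathcal C\subseteq\PL^2(X)$.} For a globally generated bundle $E$ we use the determinantal identity
\[
 s_{(1,1)}(E)=c_1(E)^2-c_2(E),\qquad s_{(2)}(E)=c_2(E)
\]
(the convention of \cite{FL} makes $s_{(2)}$ or $s_{(1,1)}$ equal to $c_2$; either way one of the two Schur classes is $c_2(E)$). Hence $c_2(E)$ is itself a generator of $\PL^2(X)$, and this inclusion is immediate.

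\emph{Inclusion $\PL^2(X)\subseteq\mathcal C$.} Here we show that each of the three types of generators is a nonnegative combination, or a limit of such combinations, of second Chern classes of globally generated bundles.
\begin{enumerate}[label=(\roman*)]
\item \emph{Products $c_1(E)c_1(F)$.} Put $L:=\det E$ and $M:=\det F$; these are globally generated line bundles. Then
\[
 c_2(L\oplus M)=c_1(L)c_1(M)=c_1(E)c_1(F),
\]
and $L\oplus M$ is globally generated.
\item \emph{The class $c_2(E)$.} This is in $\mathcal C$ by definition.
\item \emph{The remaining Schur class $c_1(E)^2-c_2(E)$.} We realize it via a kernel bundle. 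Choose a surjection $V\otimes\OX\twoheadrightarrow E$ from a trivial bundle, with kernel $K$. Taking the dual gives $E^\vee\hookrightarrow V^\vee\otimes\OX$, and the quotient $Q:=K^\vee$ is globally generated. Since $c(E^\vee)c(Q)=1$, we get $c(Q)=c(E^\vee)^{-1}$, so
\[
 c_2(Q)=c_1(E)^2-c_2(E).
\]
\end{enumerate}
Thus every generator of $\PL^2(X)$ lies in $\mathcal C$. Taking closed convex cones on both sides gives $\PL^2(X)\subseteq\mathcal C$, and together with the first part this proves the equality.

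The only point that needs care is (iii): the sign conventions. We must check that $c_2(Q)$ is exactly the Schur class $s_{(1,1)}$ or $s_{(2)}$ in the normalization of \cite[Definition~3.2]{FL}, and not its negative. That normalization is set up precisely so that Schur classes of globally generated bundles are given by Chern classes of such quotient bundles, so the identity above matches it. The lemma does not depend on the dimension beyond the degree-two setting; the hypothesis that $X$ is a threefold only reflects how $\PL^2$ was defined.
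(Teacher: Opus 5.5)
Your proof is correct and follows essentially the same route as the paper: $c_1(E)c_1(F)=c_2(\det E\oplus\det F)$, and $c_1(E)^2-c_2(E)=c_2(K^\vee)$, where $K$ is the kernel of the evaluation map (the paper's $M_E$). The forward inclusion is immediate because $c_2(E)$ is itself one of the Schur generators; in the paper's convention $s_{(1,1)}(E)=c_2(E)$ and $s_{(2)}(E)=c_1(E)^2-c_2(E)$, so no sign issue arises.
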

\begin{proof}
Every class on the right lies in $\PL^2(X)$ by definition. For the reverse inclusion, we express the other two types
of generators as second Chern classes of globally generated bundles.
For a globally generated bundle $E$, let $M_E$ denote the kernel of its
evaluation map, so that
\[
 0\longrightarrow M_E\longrightarrow H^0(E)\otimes\OX
 \longrightarrow E\longrightarrow0.
\]
Dualizing gives a
surjection $H^0(E)^*\otimes\OX\twoheadrightarrow M_E^\vee$.
Thus the dual bundle $M_E^\vee$ is globally generated. The Whitney formula gives
\[
 c_2(M_E^\vee)=c_2(M_E)=c_1(E)^2-c_2(E).
\]
For globally generated bundles $E$ and $F$, the line bundles $\det E$ and $\det F$
are globally generated, so their direct sum is globally generated and
\[
 c_2(\det E\oplus\det F)=c_1(E)c_1(F).
\]
This completes the proof.
\end{proof}

The following lemma gives a criterion for a product of two divisor classes
to lie in $\croot^2(X)$.

\begin{lemma}\label{lem:phase}
Let $A,B\in N^1(X)_\R$. Suppose that $B+A$ and $B-A$ are ample, $AB\in\operatorname{int}\Nef^2(X)$, and
\[
A^2(3B-A)>0,\qquad B^2(B-3A)>0.
\]
Then $AB\in\croot^2(X)$.
\end{lemma}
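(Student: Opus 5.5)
The plan is to realize $AB$ directly as the second Chern class $c_2(\mathcal R)=s_{(1,1)}(\mathcal R)$ of a supercritical collection of LYZ root classes $\mathcal R=\{A,B\}$ on the threefold $X$ (so $n=3$). By \eqref{eq:threefold-root-classes}, a triple $([\alpha],[\omega],\theta)$ has root classes $[D_0]=[\kappa]-[\Omega]$ and $[D_1]=[\kappa]+[\Omega]$, where $[\Omega]=[\omega]/\sin\theta$. We therefore set
\[
 [\kappa]:=\tfrac12(A+B),\qquad [\Omega]:=\tfrac12(B-A),
\]
so that $[D_0]=A$ and $[D_1]=B$. By hypothesis $[\Omega]$ is ample, hence K\"ahler, and $[\kappa]$ is ample as well. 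It then remains to find $\theta\in(0,\pi)$ satisfying \eqref{eq:LYZ}, and to verify supercriticality via Corollary~\ref{cor:root-chern-positive}.

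\textbf{Phase.} By \eqref{eq:recover-costheta}, \eqref{eq:LYZ} is equivalent to
\[
 2\cos\theta\,[\Omega]^3=[\kappa]^3-3[\kappa][\Omega]^2 .
\]
Since $[\Omega]^3>0$, such a $\theta\in(0,\pi)$ exists if and only if
\[
 [\kappa]^3-3[\kappa][\Omega]^2+2[\Omega]^3>0
 \quad\text{and}\quad
 [\kappa]^3-3[\kappa][\Omega]^2-2[\Omega]^3<0 .
\]
I will check both inequalities using the factorizations $x^3-3x+2=(x-1)^2(x+2)$ and $x^3-3x-2=(x+1)^2(x-2)$, which homogenize to
\[
 [\kappa]^3-3[\kappa][\Omega]^2\pm2[\Omega]^3
 =([\kappa]\mp[\Omega])^2([\kappa]\pm2[\Omega]).
\]
Substituting our choices gives the values $\tfrac12A^2(3B-A)>0$ and $\tfrac12B^2(3A-B)<0$. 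These are exactly the two assumed inequalities, so the required $\theta$ exists. We then define $[\omega]:=\sin\theta\,[\Omega]$, which is K\"ahler, and $[\alpha]:=[\kappa]+\cos\theta\,[\Omega]$. Then $[\kappa]=[\alpha]-\cot\theta[\omega]$, and the collection $\mathcal R$ defined by \eqref{eq:abstract-root-positions} is $\{A,B\}$.

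\textbf{Supercriticality.} Since $X$ is projective, Corollary~\ref{cor:root-chern-positive} reduces supercriticality to Chern positivity on proper subvarieties. For an irreducible curve $Y$, we have $\int_Y c_1(\mathcal R)=\int_Y(A+B)>0$, because $A+B$ is ample. For an irreducible surface $Y$, we have $\int_Y c_2(\mathcal R)=\int_Y AB>0$. This holds because $AB$ lies in the interior of $\Nef^2(X)=\Psef_2(X)^\vee$, and the class of an irreducible surface is a nonzero element of $\Psef_2(X)$ (it has positive degree against $H$ for an ample $H$). Hence $([\alpha],[\omega])$ is a cohomological supercritical LYZ pair. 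Moreover $AB=c_2(\mathcal R)=s_{(1,1)}(\Rpair{[\alpha]}{[\omega]})$ with $\ell((1,1))=2\le n-1$, so $AB$ is a generator of $\croot^2(X)$.

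\textbf{Main obstacle.} The key step is the phase step, namely recognizing that the two cubic hypotheses are precisely the statement $|\cos\theta|<1$ through the factorization above. The remaining points are routine: the interior-point argument for surfaces, and matching the root ordering so that $[D_0]=A$ and $[D_1]=B$ with $\theta\in(0,\pi)$ (at $n=3$, $\cot(\theta/2)$ and $\cot((\theta+\pi)/2)$ correspond to $\kappa\mp\Omega$ up to the shift by $\cot\theta$).
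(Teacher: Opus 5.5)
Your proof is correct and follows the paper's argument. You make the same choices $[\kappa]=\tfrac12(A+B)$, $[\Omega]=\tfrac12(B-A)$, $[\omega]=\sin\theta\,[\Omega]$, $[\alpha]=[\kappa]+\cos\theta\,[\Omega]$, and deduce supercriticality from Corollary~\ref{cor:root-chern-positive}. Your factorization $([\kappa]\mp[\Omega])^2([\kappa]\pm2[\Omega])$ is a repackaging of the paper's identities $4([\kappa]^3-3[\kappa][\Omega]^2)=A^2(3B-A)-B^2(B-3A)$ and $8[\Omega]^3=A^2(3B-A)+B^2(B-3A)$, which exhibit $\cos\theta$ as $(a-b)/(a+b)$ with $a,b>0$.
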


\begin{proof}
The two positive intersection numbers in the hypotheses determine
a unique $\theta\in(0,\pi)$ by
\[
 \cos\theta=
 \frac{A^2(3B-A)-B^2(B-3A)}
      {A^2(3B-A)+B^2(B-3A)}.
\]
We set
\[
 [\kappa]:=\frac{A+B}{2},\qquad
 [\omega]:=\frac{B-A}{2}\sin\theta,\qquad
 [\alpha]:=[\kappa]+\frac{B-A}{2}\cos\theta.
\]
Since $[\omega]/\sin\theta=(B-A)/2$, we have
\[
\begin{aligned}
 4\bigl([\kappa]^3-3[\kappa]([\omega]/\sin\theta)^2\bigr)
 &=A^2(3B-A)-B^2(B-3A),\\
 8([\omega]/\sin\theta)^3
 &=(B-A)^3=A^2(3B-A)+B^2(B-3A).
\end{aligned}
\]
By the choice of $\theta$ and the identity
\eqref{eq:recover-costheta}, these equalities give
$\int_XP_3([\kappa],[\omega])=0$. By \eqref{eq:threefold-root-classes}, the associated LYZ root classes are
$A,B$, so their Chern classes are $c_1=A+B$ and $c_2=AB$.
The assumptions make $[\omega]$ and $c_1$ ample, while $c_2$
pairs positively with every irreducible surface.
Corollary~\ref{cor:root-chern-positive} therefore shows that $([\alpha],[\omega])$ constructed as above is a cohomological supercritical LYZ pair with $c_2=AB$. This completes the proof.
\end{proof}

To apply Lemma~\ref{lem:phase} to a curve class, we fix an ample class $H$.
The Hodge index theorem makes multiplication by $H$ an isomorphism
\[
 L_H\colon N^1(X)_\R\longrightarrow N_1(X)_\R,
 \qquad D\longmapsto DH.
\]
Following the notation in \cite[Definition~3.2]{PRT}, for a curve class
$\Gamma$, we write
\[
\frac{\Gamma}{H}:=L_H^{-1}(\Gamma),
\qquad\text{so that}\qquad
H\frac{\Gamma}{H}=\Gamma.
\]
\begin{definition}

Let $X$ be a smooth projective complex threefold. Let
$\Gamma\in N_1(X)_{\R}$ be a curve class and $H$ be an ample $\R$-divisor. The pair $(\Gamma,H)$ is called a \emph{Hodge--Riemann pair} if
\[
H\Gamma>0
\]
and
\[
H\left(\frac{\Gamma}{H}\right)^2>0.
\]
\end{definition}

\begin{corollary}\label{cor:HR-reconstruction}
Let $\Gamma\in\operatorname{int}\Nef^2(X)$ and let $H$ be ample. If
\[
 H\left(\frac{\Gamma}{H}\right)^2>0,
\]
then $\Gamma\in\croot^2(X)$.
\end{corollary}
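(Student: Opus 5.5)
We plan to apply Lemma~\ref{lem:phase} to a factorization $\Gamma=AB$ in which $B$ is a large multiple of $H$ and $A$ is a correspondingly small multiple of $\Gamma/H$. Set $D:=\Gamma/H\in N^1(X)_\R$, so that $HD=\Gamma$ and, by hypothesis, $HD^2>0$. For a parameter $t>0$ we take
\[
 B:=tH,\qquad A:=\frac{1}{t}D,
\]
so that $AB=HD=\Gamma$ for every $t$. We will show that all hypotheses of Lemma~\ref{lem:phase} hold once $t$ is sufficiently large.

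First, the ampleness conditions. The classes $B\pm A=t\bigl(H\pm t^{-2}D\bigr)$ are positive multiples of $H\pm t^{-2}D$. Since the ample cone is open in $N^1(X)_\R$ and $H$ is ample, both classes are ample for $t\gg0$. The condition $AB\in\operatorname{int}\Nef^2(X)$ holds for every $t$, because $AB=\Gamma$ and $\Gamma\in\operatorname{int}\Nef^2(X)$ by assumption.

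Second, the two cubic inequalities. Expanding gives
\[
 A^2(3B-A)=\frac{3\,HD^2}{t}-\frac{D^3}{t^3},\qquad
 B^2(B-3A)=t^3H^3-3t\,H^2D.
\]
The first expression is positive for $t\gg0$, since its leading term $3HD^2/t$ is positive by hypothesis and dominates the $O(t^{-3})$ term. The second is positive for $t\gg0$, since $H^3>0$. Fixing one such large $t$, Lemma~\ref{lem:phase} yields $\Gamma=AB\in\croot^2(X)$, where $\croot^2(X)\cap N^2(X)_\R$ is understood via the identification of $N^2(X)_\R$ with $H^{2,2}_{\mathrm{alg}}(X,\R)$ described above.

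The only essential input is the sign of the leading term of $A^2(3B-A)$. This is precisely where the Hodge--Riemann-type hypothesis $H(\Gamma/H)^2>0$ is used, and it is the step I would check most carefully. The remaining conditions follow from openness of the ample cone, positivity of $H^3$, and the assumption that $\Gamma$ lies in the interior of $\Nef^2(X)$.
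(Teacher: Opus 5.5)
Your proposal is correct and is essentially the paper's argument: the paper applies Lemma~\ref{lem:phase} to $A=u\,\Gamma/H$, $B=H$ with $u\to0^+$, and then uses that $\croot^2(X)$ is a cone to pass from $u\Gamma$ to $\Gamma$. Your pair $(A,B)=(t^{-1}\Gamma/H,\;tH)$ is the paper's pair with $u=t^{-2}$, multiplied by $t$, so the same leading-term checks apply; the only gain is that $AB=\Gamma$ exactly, which makes the final rescaling step unnecessary.
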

\begin{proof}
We put $A:=u\Gamma/H$ and $B:=H$. For small $u>0$, the classes $H\pm A$
are ample and $AB=u\Gamma\in\operatorname{int}\Nef^2(X)$. Moreover,
\[
\begin{aligned}
 \frac{A^2(3H-A)}{u^2}
 &=\left(\frac{\Gamma}{H}\right)^2
   \left(3H-u\frac{\Gamma}{H}\right)
 \longrightarrow 3H\left(\frac{\Gamma}{H}\right)^2>0,\\
 H^2(H-3A)&=H^3-3uH\Gamma\longrightarrow H^3>0.
\end{aligned}
\]
Lemma~\ref{lem:phase} realizes $u\Gamma$ as the second Chern class
of a collection of LYZ root classes. Since $\croot^2(X)$ is a cone, it also contains $\Gamma$.
\end{proof}

We next apply the Hodge--Riemann property of ample vector bundles
to the criterion in Corollary~\ref{cor:HR-reconstruction}.
We use the following threefold Chern-class case of
\cite[Proposition~3.8]{PRT}.

\begin{proposition}[\cite{PRT}, Proposition~3.8]\label{prop:PRT-threefold}
Let $X$ be a smooth projective complex threefold,
let $E$ be a vector bundle of rank $r\ge2$, and let
$H\in\Amp^1(X)$ and $\varepsilon\ge0$.
If the $\R$-twisted bundle
$E_\varepsilon:=E\otimes\mathcal O_X(\varepsilon H)$ is ample,
then $c_2(E_\varepsilon)\in\operatorname{int}\Nef^2(X)$ and
\[
 \bigl(c_2(E_\varepsilon),(r-1)c_1(E_\varepsilon)\bigr)
\]
is a Hodge--Riemann pair.
\end{proposition}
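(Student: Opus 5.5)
The plan is to prove the two assertions separately. The interior-nef statement follows from positivity of Chern classes of ample bundles plus openness of ampleness. The Hodge--Riemann inequality follows from a deformation argument in the twisting parameter, anchored at large twist and kept alive by a nondegeneracy input for ample bundles. Throughout, write $L:=(r-1)c_1(E_\varepsilon)$ and $\Gamma:=c_2(E_\varepsilon)$, and recall that $\Gamma$ pairs with surfaces under $N^2(X)_\R\times N_2(X)_\R\to\R$.

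\emph{Step 1: interior of the nef cone.} Ampleness is open in the twisting parameter, so $E_{\varepsilon-\delta}$ is ample for some small $\delta>0$. The twisting formula gives
\[
 c_2(E_\varepsilon)=c_2(E_{\varepsilon-\delta})+(r-1)\delta\,c_1(E_{\varepsilon-\delta})H+\binom r2\delta^2H^2 .
\]
Restricting to an irreducible surface $S$, the Bloch--Gieseker/Fulton--Lazarsfeld positivity for ample bundles of rank $r\ge2$ gives $\int_Sc_2(E_{\varepsilon-\delta})>0$. So the first term is nef, and so is the second, since $c_1(E_{\varepsilon-\delta})$ is ample. The last term $H^2$ lies in $\operatorname{int}\Nef^2(X)$, because a class pairing with $\Psef_2$ is bounded below by a multiple of its pairing with the fixed ample $H$. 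A nef class plus an interior class is interior, so $\Gamma\in\operatorname{int}\Nef^2(X)$. This also gives $L\Gamma>0$, as $L$ is ample.

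\emph{Step 2: the inequality $L(\Gamma/L)^2>0$.} Consider the path $t\mapsto E_t:=E\otimes\mathcal O_X(tH)$ for $t\ge\varepsilon$; every $E_t$ on it is ample. Put
\[
 L_t:=(r-1)c_1(E_t),\qquad \Gamma_t:=c_2(E_t),\qquad D_t:=\Gamma_t/L_t .
\]
By Hard Lefschetz for the ample class $L_t$, the class $D_t$ depends continuously on $t$, so $f(t):=L_tD_t^2$ is continuous. As $t\to\infty$, the leading terms are $\Gamma_t\sim\binom r2t^2H^2$ and $L_t\sim r(r-1)tH$. Hence $D_t\sim\tfrac t2H$ and $f(t)\sim\tfrac{r(r-1)}4t^3H^3>0$. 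It therefore suffices to show that $f$ never vanishes on $[\varepsilon,\infty)$.

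\emph{Step 3: nonvanishing of $f$, the main obstacle.} Suppose $f(t_0)=0$ and write $D:=D_{t_0}$. Since $L_{t_0}D=\Gamma_{t_0}$, we get $\Gamma_{t_0}\cdot D=L_{t_0}D^2=0$, while $\Gamma_{t_0}\cdot L_{t_0}>0$ by Step 1. The Hodge index theorem for $L_{t_0}$ then says that $D$ lies on the boundary of the light cone of the form $(D,D')\mapsto L_{t_0}DD'$. I first try to contradict this with the Hodge--Riemann property of $c_2$ of ample bundles in the sense of Ross--Toma. That property asserts that the quadratic form $(D,D')\mapsto\int_XDD'\cdot h$, for the degree-one class $h$ built from $c_1(E_{t_0})$ and $\Gamma_{t_0}$, has signature $(1,\rho-1)$ with $\Gamma_{t_0}$ "timelike". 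Timelikeness of $\Gamma_{t_0}/L_{t_0}$ for this form is exactly $f(t_0)>0$. Its nondegeneracy along the whole ample path is what prevents the sign change.

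If this direct input is not available in the needed form, the fallback has two parts. First, reduce via a Bloch--Gieseker finite cover to the case where $E_{t_0}\otimes\mathcal O(-\eta)$ is ample for a genuine divisor $\eta$. Second, pass to the projective bundle $\pi\colon\mathbb P(E_{t_0})\to X$, where $\pi_*\xi^{r+1}$ recovers $c_1^2-c_2$ and the tautological class $\xi$ is ample. Applying the classical Hodge index theorem on $\mathbb P(E_{t_0})$ to pullbacks of $D$ and $L_{t_0}$ against powers of $\xi$ should yield strict positivity. I expect this projective-bundle computation to be the technical heart of the proof.
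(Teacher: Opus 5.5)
Your Steps 1 and 2 are sound. The $\delta$-shift, Fulton--Lazarsfeld positivity and $H^2\in\operatorname{int}\Nef^2(X)$ give the interior claim and $L\Gamma>0$. The continuity argument correctly reduces the inequality to $f(t)\neq0$ for all $t\ge\varepsilon$. (The paper does not prove this statement; it quotes it from PRT.)

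The gap is Step 3, which carries the entire Hodge--Riemann content, and nothing you write proves it. First, $f(t_0)=0$ is equivalent to the form $A\mapsto L_{t_0}A^2$ being degenerate on $\{A:\Gamma_{t_0}A=0\}$. So the deformation only trades positivity for a Hard Lefschetz-type nondegeneracy along the whole ample path, which is no easier. Second, the input you invoke is not available on $X$. The Ross--Toma theorem is a statement on $H^{1,1}$ for Schur classes of degree $\dim X-2$; on a threefold that class is $c_1(E)$, so the theorem reduces to the Hodge index theorem you already used. It says nothing about where $\Gamma/L$ sits. Your formulation, ``$\Gamma_{t_0}/L_{t_0}$ is timelike'', is literally $f(t_0)>0$, so that step is circular.

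The information from Step 1 cannot suffice either. On $(\mathbb P^1)^3$, take $L=H_1+H_2+H_3$ and $\Gamma=H_1H_2+\epsilon(H_1H_3+H_2H_3)$. Then $\Gamma\in\operatorname{int}\Nef^2$ and $\Gamma/L=\tfrac12H_1+\tfrac12H_2+(\epsilon-\tfrac12)H_3$, but $L(\Gamma/L)^2=2\epsilon-\tfrac12<0$ for small $\epsilon>0$. Finally, the fallback on $\mathbb P(E)$ produces the wrong class. The Hodge index theorem for $\xi$, restricted to $\pi^*N^1(X)$, uses $\pi_*\xi^r=c_1$ and $\pi_*\xi^{r+1}=c_1^2-c_2$. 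It yields $A(c_1^2-c_2)=0\Rightarrow A^2c_1<0$, which is the statement for $(s_2(E),c_1(E))$, not for $(c_2(E),(r-1)c_1(E))$.

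One way to close the gap is to go up one dimension. Let $Y:=X\times\mathbb P^1$ with projections $p,q$, and set $\ell:=q^*[\mathrm{pt}]$. The bundle $F:=p^*E_\varepsilon\otimes q^*\mathcal O_{\mathbb P^1}(1)$ is ample, and since $\ell^2=0$ we have $c_2(F)=p^*\Gamma+\ell\cdot p^*L$. On $N^1(Y)=p^*N^1(X)\oplus\R\ell$, the form $\int_Y ab\,c_2(F)$ is
\[
 Q(A+x\ell)=LA^2+2x\,\Gamma A=L(A+xD)^2-x^2\,LD^2,\qquad D:=\Gamma/L .
\]
Hence $Q\cong q_L\oplus\langle -LD^2\rangle$, where $q_L(A)=LA^2$ has signature $(1,\rho(X)-1)$. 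The Ross--Toma Hodge--Riemann property for the degree-two Schur class $c_2(F)$ on the fourfold $Y$ says that $Q$ has signature $(1,\rho(Y)-1)=(1,\rho(X))$. This forces $LD^2>0$, which is the Hodge--Riemann pair inequality. It also shows what your nonvanishing of $f(t)$ really is: Hard Lefschetz for $c_2$ of the corresponding ample bundle on $Y$. That is a genuine theorem about ample bundles, not a consequence of the Hodge index theorem on $X$.
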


\begin{corollary}\label{cor:ample-threefold-HR}
Under the hypotheses of Proposition~\ref{prop:PRT-threefold}, the pair
$(c_2(E_\varepsilon),c_1(E_\varepsilon))$ is a Hodge--Riemann pair, and $c_2(E_\varepsilon)\in\croot^2(X)$.
\end{corollary}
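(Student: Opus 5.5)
The plan is to derive both claims directly from Proposition~\ref{prop:PRT-threefold} by rescaling, and then to invoke Corollary~\ref{cor:HR-reconstruction}. We put $\Gamma:=c_2(E_\varepsilon)$ and $H':=(r-1)c_1(E_\varepsilon)$. Proposition~\ref{prop:PRT-threefold} gives $\Gamma\in\operatorname{int}\Nef^2(X)$ together with
\[
 H'\Gamma>0,\qquad H'\left(\frac{\Gamma}{H'}\right)^2>0 .
\]

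First we check that $c_1(E_\varepsilon)$ is ample. Since $E_\varepsilon$ is an ample $\R$-twisted bundle, its determinant $c_1(E_\varepsilon)$ is an ample $\R$-divisor class. This is standard: the determinant of an ample bundle is ample, and the twisted version follows by the usual $\R$-twisted formalism. Hence both $H:=c_1(E_\varepsilon)$ and $H'=(r-1)H$ with $r\ge2$ are ample, so the definition of a Hodge--Riemann pair applies to $(\Gamma,H)$.

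Next we record how the two defining quantities transform under scaling $H\mapsto\lambda H$ with $\lambda>0$. From $L_{\lambda H}=\lambda L_H$ we get
\[
 \frac{\Gamma}{\lambda H}=\lambda^{-1}\frac{\Gamma}{H}.
\]
Consequently
\[
 (\lambda H)\Gamma=\lambda\,H\Gamma,\qquad
 (\lambda H)\left(\frac{\Gamma}{\lambda H}\right)^2=\lambda^{-1}H\left(\frac{\Gamma}{H}\right)^2 .
\]
Both quantities only get multiplied by positive factors. Taking $\lambda=r-1$ shows that $(\Gamma,H')$ is a Hodge--Riemann pair exactly when $(\Gamma,H)$ is. This gives the first claim.

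Finally, $\Gamma\in\operatorname{int}\Nef^2(X)$, $H$ is ample, and $H(\Gamma/H)^2>0$, so Corollary~\ref{cor:HR-reconstruction} yields $c_2(E_\varepsilon)\in\croot^2(X)$. Here $\Gamma$ is viewed in $N^2(X)_\R\cong H^{2,2}_{\mathrm{alg}}(X,\R)$ via the identification made above.

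There is no real obstacle. The only point needing care is the ampleness of $c_1(E_\varepsilon)$ for an $\R$-twisted ample bundle. If the twisted formalism is not taken as known, it can be reduced to $\det(E_\varepsilon)=\det E+r\varepsilon H$ together with the ampleness of the determinant of the ample $\mathbb Q$-twisted bundles $E\otimes\mathcal O_X(\varepsilon' H)$ for rational $\varepsilon'<\varepsilon$ close to $\varepsilon$. Alternatively, it can be replaced by the observation that $H'$ ample already forces $H=H'/(r-1)$ to be ample.
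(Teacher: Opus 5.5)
Your proposal is correct and follows the paper's proof: rescale the second entry of the Hodge--Riemann pair from Proposition~\ref{prop:PRT-threefold} by $1/(r-1)$, observe that $c_1(E_\varepsilon)$ is ample, and apply Corollary~\ref{cor:HR-reconstruction}. Your explicit scaling computation, and your remark that ampleness of $c_1(E_\varepsilon)$ already follows from the ampleness of $(r-1)c_1(E_\varepsilon)$ built into the definition of a Hodge--Riemann pair, only spell out steps that the paper states in one line.
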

\begin{proof}
By Proposition~\ref{prop:PRT-threefold}, the pair
$(c_2(E_\varepsilon),(r-1)c_1(E_\varepsilon))$ is a Hodge--Riemann pair, and by rescaling the second component by $1/(r-1)>0$ it is clear that 
$(c_2(E_\varepsilon),c_1(E_\varepsilon))$ is again a Hodge--Riemann pair.
Moreover, by Proposition~\ref{prop:PRT-threefold} we have $c_2(E_\varepsilon)\in\operatorname{int}\Nef^2(X)$, and $c_1(E_\varepsilon)$ is ample since $E_\varepsilon$ is ample.
Then we can apply Corollary~\ref{cor:HR-reconstruction} to
$(c_2(E_\varepsilon),c_1(E_\varepsilon))$ to obtain
$c_2(E_\varepsilon)\in\croot^2(X)$.
\end{proof}

Approximating globally generated bundles by ample twists now yields
the desired cone inclusion.

\begin{theorem}\label{thm:pliant-inclusion}
For every smooth projective complex threefold $X$,
\begin{equation}
 S^2\Nef^1(X)\subseteq\PL^2(X)
 \subseteq\bigl(\croot^2(X)\cap N^2(X)_\R\bigr)
 \subseteq\Nef^2(X).
\end{equation}
\end{theorem}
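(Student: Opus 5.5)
The plan is to prove the three inclusions separately. The outer ones are essentially formal; the middle one carries the content.

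\emph{First inclusion.} For ample divisors $H_1,H_2$, some multiples $aH_1,bH_2$ are very ample. Then $E=\mathcal O_X(aH_1)\oplus\mathcal O_X(bH_2)$ is globally generated with $c_2(E)=ab\,H_1H_2$. Lemma~\ref{lem:rank-reduction} (or simply the generator $c_1(E)c_1(F)$) puts $H_1H_2$ in $\PL^2(X)$. Real ample classes are positive combinations of integral ample classes, so bilinearity, convexity and closedness give $S^2\Nef^1(X)\subseteq\PL^2(X)$, since the nef cone is the closure of the ample cone.

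\emph{Last inclusion.} This is Lemma~\ref{lem:numerical-cone} with $m=2$, using the identification of $N^2(X)_\R$ with $H^{2,2}_{\mathrm{alg}}(X,\R)$ established above. Under that identification $q$ is the identity on $\croot^2(X)\cap N^2(X)_\R$.

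\emph{Middle inclusion.} By Lemma~\ref{lem:rank-reduction} it suffices to show $c_2(E)\in\croot^2(X)$ for every globally generated bundle $E$, because $\croot^2(X)\cap N^2(X)_\R$ is a closed convex cone. If $E$ has rank $1$, then $c_2(E)=0$, which lies in the cone trivially.

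Assume $\operatorname{rk}E=r\ge2$ and fix an ample $H$. For every $\varepsilon>0$, the twist $E_\varepsilon=E\otimes\mathcal O_X(\varepsilon H)$ is ample, since it is a globally generated bundle twisted by an ample $\R$-class. Corollary~\ref{cor:ample-threefold-HR} then gives $c_2(E_\varepsilon)\in\croot^2(X)$. This class is algebraic, so it lies in $N^2(X)_\R$. Expanding,
\[
c_2(E_\varepsilon)=c_2(E)+(r-1)\varepsilon\,c_1(E)H+\binom r2\varepsilon^2H^2,
\]
which tends to $c_2(E)$ as $\varepsilon\to0^+$. Closedness of $\croot^2(X)$, together with closedness of the finite-dimensional subspace $N^2(X)_\R$, yields $c_2(E)\in\croot^2(X)\cap N^2(X)_\R$.

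The main point needing care is ampleness of $E_\varepsilon$ for real $\varepsilon$, i.e.\ that the hypothesis of Proposition~\ref{prop:PRT-threefold} really holds. I would handle this by restricting first to rational $\varepsilon$. For rational $\varepsilon$, a globally generated bundle tensored with an ample $\mathbb Q$-line bundle is ample in the $\mathbb Q$-twisted sense, because quotients of $E_\varepsilon$ restricted to curves have degree at least $\varepsilon\,H\cdot C>0$. Letting $\varepsilon\to0^+$ along rationals is enough for the limiting argument above.
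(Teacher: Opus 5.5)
Your proposal is correct and follows the paper's proof: the first inclusion via $c_1(L)c_1(M)=c_2(L\oplus M)$, the middle one via Lemma~\ref{lem:rank-reduction}, Corollary~\ref{cor:ample-threefold-HR} for the ample twists $E_\varepsilon$ and the limit $\varepsilon\to0^+$, and the last one via Lemma~\ref{lem:numerical-cone}. Treating rank one separately and restricting to rational $\varepsilon$ only makes explicit what the paper leaves implicit; note that ampleness of $E_\varepsilon$ should be justified by the standard fact that a nef bundle twisted by an ample $\mathbb Q$-class is ample, since positivity of quotient degrees on curves alone does not imply ampleness.
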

\begin{proof}
The inclusion $S^2\Nef^1(X)\subseteq\PL^2(X)$ follows from
$c_1(L)c_1(M)=c_2(L\oplus M)$ for globally generated line bundles
$L,M$.

For $\PL^2(X)\subseteq\croot^2(X)$, Lemma~\ref{lem:rank-reduction}
reduces the problem to a globally generated bundle $E$ of rank $r\ge2$. We fix an integral ample
divisor $H$ and for $\varepsilon>0$ we set $E_\varepsilon:=E\otimes\mathcal O_X(\varepsilon H)$. Since a globally generated bundle is nef,
the twist $E_\varepsilon$ is an ample $\R$-twisted vector bundle.
Corollary~\ref{cor:ample-threefold-HR} therefore gives
$c_2(E_\varepsilon)\in\croot^2(X)$.

A direct computation shows that
\[
c_2(E_\varepsilon)=c_2(E)+(r-1)\varepsilon Hc_1(E)
                       +\binom r2\varepsilon^2H^2.
\]
We have $c_2(E_\varepsilon)\to c_2(E)$ as $\varepsilon\to0^+$, and thus $c_2(E)\in\croot^2(X)$. This shows the second inclusion. The last inclusion follows from Lemma~\ref{lem:numerical-cone}.
\end{proof}

\Needspace{6\baselineskip}
\subsubsection{Strict inclusion on the Fujino--Sato threefold}\label{subsec:strict-inclusion}
Let $X=X_{FS}$ be the smooth projective toric threefold of
\cite[Example~4.2]{FS}. Its cohomology is generated by divisor classes
\cite[Theorem~12.4.4]{CLS}, so $H^{2,2}(X,\R)=N^2(X)_\R$.

We prove that $\PL^2(X)\subseteq\croot^2(X)$ is strict by constructing
a special class $\Gamma_*$ in the LYZ cone. If $\Gamma_*$ belonged to
the pliant cone $\PL^2(X)$, Lemma~\ref{lem:supporting-face-rigidity}
would give a sequence of Chern classes with prescribed limiting properties.
Lemma~\ref{lem:small-chern} would then yield a special factorization of
$\Gamma_*$, contradicting its construction.
Appendix~\ref{app:FS-data} gives the fan and intersection data used below.

\paragraph{The candidate class}
Let $N_1,\ldots,N_5$ be the nef basis in \eqref{eq:FS-psef-basis},
and let $D_1,\ldots,D_8$ denote the
invariant prime divisors, numbered as in \cite[Example~4.2]{FS}.
The cone computations in Appendix~\ref{app:FS-data} give
\begin{equation}\label{eq:FS-cones}
 \begin{aligned}
 \Nef^1(X)&=\Cone(N_1,\ldots,N_5),\\
 \Psef^1(X)&=\Cone(D_1,D_5,D_6,D_7,D_8).
 \end{aligned}
\end{equation}
We set $K:=N_1+\cdots+N_5$, an integral ample class.
For our example, we choose the integral divisor $M$ and define
$\Gamma_*$ by
\begin{align}
 M&:=-3N_1+2N_2-4N_3+N_4+6N_5,\\
 \Gamma_*&:=N_4M.
\end{align}
The values in \eqref{eq:FS-M-squares} and
\eqref{eq:FS-triple-values} give
\begin{equation}\label{eq:witness-numerics}
 M^2N_4=0,\qquad M^2N_i>0\quad(i\ne4),\qquad MK^2>0.
\end{equation}
Thus the curve class $M^2$ vanishes precisely on the nef ray generated by $N_4$.
In particular, $M\Gamma_*=0$ and $M^2H>0$ for every ample class $H$.

Equation~\eqref{eq:FS-exact-certificate} shows that the generators
of $\Psef^1(X)$ pair nonnegatively with $\Gamma_*$ and strictly
positively with $KM$. Moreover, $D_1\Gamma_*=2$, so $\Gamma_*\ne0$.
By \eqref{eq:FS-cones} and the duality
$\Nef^2(X)=(\Psef^1(X))^\vee$, we obtain
\begin{equation}\label{eq:witness-curve-positivity}
 0\ne\Gamma_*\in\Nef^2(X),\qquad
 KM\in\operatorname{int}\Nef^2(X).
\end{equation}
Since the ample cone is open, $K-\varepsilon D_1$ is ample, hence
pseudoeffective, for sufficiently small $\varepsilon>0$.
Thus $(K-\varepsilon D_1)\Gamma_*\ge0$, and
\[
 K\Gamma_*\ge\varepsilon D_1\Gamma_*=2\varepsilon>0,
\]
where $D_1\Gamma_*=2$ follows from \eqref{eq:FS-exact-certificate}
in Appendix~\ref{app:FS-data}.

\Needspace{4\baselineskip}
\begin{lemma}\label{lem:witness-LYZ}
The class $\Gamma_*=N_4M$ belongs to $\croot^2(X)$.
\end{lemma}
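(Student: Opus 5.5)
We need to show $\Gamma_*=N_4M\in\croot^2(X)$. The plan is to apply Lemma~\ref{lem:phase} to a perturbation of $\Gamma_*$ and then pass to the limit, using that $\croot^2(X)$ is closed.

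\textbf{Choice of classes.} Take $B:=\lambda N_4+K$ (or a similar ample class built from $N_4$) and $A:=uM$ with $u>0$ small. Then $AB=u(\lambda N_4M+KM)$. After dividing by $u\lambda$, this is $\Gamma_*+\lambda^{-1}KM$, which tends to $\Gamma_*$ as $\lambda\to\infty$. For fixed $\lambda$, the condition that $B\pm A$ is ample holds once $u$ is small, since $B$ is ample and the ample cone is open.

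\textbf{Interior of $\Nef^2$.} By \eqref{eq:witness-curve-positivity}, $\Gamma_*\in\Nef^2(X)$ and $KM\in\operatorname{int}\Nef^2(X)$. Hence $\lambda\Gamma_*+KM$ lies in the interior for every $\lambda>0$.

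\textbf{The two cubic inequalities.} We need
\[
A^2(3B-A)=u^2M^2(3B-uM)>0,\qquad B^2(B-3A)=B^3-3uB^2M>0.
\]
The second holds for small $u$ because $B^3>0$. For the first, as $u\to0$ the sign is governed by
\[
M^2B=\lambda M^2N_4+M^2K=M^2K>0,
\]
where we use $M^2N_4=0$ together with $M^2H>0$ for every ample $H$ from \eqref{eq:witness-numerics}. So for fixed $\lambda$ and small $u$, all hypotheses of Lemma~\ref{lem:phase} hold. Therefore
\[
u(\lambda\Gamma_*+KM)\in\croot^2(X).
\]
Rescaling by the positive factor $1/(u\lambda)$ and letting $\lambda\to\infty$ gives $\Gamma_*\in\croot^2(X)$, by closedness of the cone.

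\textbf{Expected obstacle.} The one delicate point is uniformity in $\lambda$. For each fixed $\lambda$, however, $u$ may be chosen depending on $\lambda$, because the conclusion of Lemma~\ref{lem:phase} is invariant under positive scaling of $AB$. So no uniform estimate is needed. The only computational inputs are the sign facts already recorded in \eqref{eq:witness-numerics} and \eqref{eq:witness-curve-positivity}, so the proof should be short.
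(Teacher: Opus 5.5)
Your proposal is correct and is essentially the paper's argument. The paper takes $H_u=N_4+uK$, a positive multiple of your $B$ with $\lambda=1/u$, notes that $\Gamma_u/H_u=M$ and $H_uM^2=uKM^2>0$, and then invokes Corollary~\ref{cor:HR-reconstruction} followed by closedness of $\croot^2(X)$; the proof of that corollary is exactly your direct application of Lemma~\ref{lem:phase} with $A$ a small multiple of $M$.
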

\begin{proof}
For $u>0$, we set
\[
 H_u:=N_4+uK,
 \qquad
 \Gamma_u:=H_uM=\Gamma_*+uKM.
\]
The class $H_u$ is ample, and \eqref{eq:witness-curve-positivity}
gives $\Gamma_u\in\operatorname{int}\Nef^2(X)$.
Moreover, $\Gamma_u/H_u=M$ and
$H_uM^2=uKM^2>0$ by \eqref{eq:witness-numerics}.
Corollary~\ref{cor:HR-reconstruction} gives
$\Gamma_u\in\croot^2(X)$, and closedness gives
$\Gamma_*\in\croot^2(X)$ as $u\to0^+$.
\end{proof}

\Needspace{4\baselineskip}
\paragraph{Limits of normalized Chern classes}
The weak Hodge--Riemann inequality for globally generated bundles
shows that $M$ is nonnegative on the pliant cone.

\begin{lemma}\label{lem:M-supports-pliant}
The divisor $M$ satisfies $M\gamma\ge0$ for every
$\gamma\in\PL^2(X)$, with $M\Gamma_*=0$.
\end{lemma}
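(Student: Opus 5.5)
We need $M\gamma\ge0$ for every $\gamma\in\PL^2(X)$. By Lemma~\ref{lem:rank-reduction}, $\PL^2(X)$ is the closed cone generated by $c_2(E)$ for globally generated $E$. Since $\gamma\mapsto M\gamma$ is linear and continuous, it suffices to prove $M\,c_2(E)\ge0$ for every globally generated bundle $E$. The equality $M\Gamma_*=M^2N_4=0$ is already recorded in \eqref{eq:witness-numerics}.

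\textbf{Step 1: reduce to ample twists.} Fix a globally generated bundle $E$ of rank $r\ge2$; for rank one, $c_2=0$ and there is nothing to prove. We follow the proof of Theorem~\ref{thm:pliant-inclusion}: for $\varepsilon>0$, the twist $E_\varepsilon=E\otimes\mathcal O_X(\varepsilon K)$ is ample, and $c_2(E_\varepsilon)\to c_2(E)$. So it suffices to show $M c_2(E_\varepsilon)\ge0$, and then let $\varepsilon\to0^+$.

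\textbf{Step 2: invoke the Hodge--Riemann pair.} By Corollary~\ref{cor:ample-threefold-HR}, the pair $(\Gamma,H):=(c_2(E_\varepsilon),c_1(E_\varepsilon))$ is a Hodge--Riemann pair, and $\Gamma\in\operatorname{int}\Nef^2(X)$. Put $A:=\Gamma/H$. The quadratic form $Q(D):=H A D$ on $N^1(X)_\R$ then satisfies:
\begin{itemize}
\item $Q(H)=H\Gamma>0$;
\item $Q(A)=HA^2>0$;
\item $Q(H)$ and $Q(A)$ are both computed against the mixed class $HA$.
\end{itemize}
The Hodge--Riemann property of $(\Gamma,H)$ in \cite{PRT} should mean exactly that the symmetric form $(D_1,D_2)\mapsto A D_1 D_2$ has Lorentzian signature $(1,\rho-1)$, or at least is negative semidefinite on the $H$-primitive part. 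I will take that as the working meaning.

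\textbf{Step 3: apply the reverse Cauchy--Schwarz inequality.} For a Lorentzian form $B(D_1,D_2)=AD_1D_2$ with a positive vector $H$ (namely $B(H,H)=AH^2>0$, which holds since $A$ is close to an ample class), we have, for every $D$,
\[
 B(H,D)^2\ge B(H,H)\,B(D,D).
\]
This is not directly what we need. The natural form to use is instead the $H$-weighted one,
\[
 (D_1,D_2)\mapsto HD_1D_2,
\]
which is Lorentzian by Hodge index. Write $M\Gamma=HAM$. The goal is to show $HAM\ge0$, using $HM^2>0$ from \eqref{eq:witness-numerics}, $HA^2>0$, and $HAH>0$ with $HMH=MK^2$-type positivity.

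By the Hodge index theorem, $A$ and $M$ each lie in the cone $\{D: HD^2>0\}$. This cone has two components, and $A$ and $M$ each lie in the component containing $H$, since $H^2A=H\Gamma>0$ and $H^2M>0$. For $H^2M>0$: ample $H$ pairs positively with the curve class $M^2$, but I need the sign of $H^2M$ itself. Here I would use $MK^2>0$ together with continuity along the ample cone. The plan is to argue that $H^2M$ cannot vanish on the ample cone: if it vanished at some ample $H$, the Hodge index inequality would force $HM^2\le0$, contradicting $HM^2>0$. So $H^2M$ has constant sign on the ample cone, equal to the sign at $K$, which is positive.

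Two vectors in the same component of the positive cone of a Lorentzian form pair positively. Hence $HAM>0$, that is, $Mc_2(E_\varepsilon)>0$.

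\textbf{Main obstacle.} The delicate point is Step 3: verifying that $A=\Gamma/H$ and $M$ lie in the same component of the positive cone for $HD_1D_2$, uniformly in $E$ and $\varepsilon$. The component of $A$ should follow from $H^2A=H\Gamma>0$. The component of $M$ needs the argument above that $H^2M$ has constant sign on the ample cone. That argument relies on $M^2H>0$ for all ample $H$, which holds because $M^2$ is positive on every nef generator except $N_4$, where it vanishes.
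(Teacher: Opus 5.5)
Your argument is correct, and it takes a genuinely different route from the paper.

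\textbf{The paper's route.} It first proves, for the untwisted bundle $E$, the implication $Ac_2(E)=0\Rightarrow A^2c_1(E)\le0$, by perturbing $A$ to $A_\varepsilon$ and letting $\varepsilon\to0^+$. It then refutes $Mc_2(E)<0$ by testing $A=M+tK$. This needs $c_1(E)M^2\ge0$, $c_1(E)K^2\ge0$ and $c_1(E)MK>0$. The last inequality requires both $c_1(E)\ne0$ (via nefness of $s_{(2)}(E)$, \cite[Lemma~3.7]{FL}) and $KM\in\operatorname{int}\Nef^2(X)$.

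\textbf{Your route.} You stay at the twisted level, where $H=c_1(E_\varepsilon)$ is ample and $(D_1,D_2)\mapsto HD_1D_2$ is a nondegenerate Lorentzian form. You check that $\Gamma/H$ and $M$ both lie in the component of its positive cone containing $H$:
\begin{itemize}
\item $H(\Gamma/H)^2>0$ and $H^2(\Gamma/H)=H\Gamma>0$ are exactly the Hodge--Riemann pair conditions;
\item $HM^2>0$ follows from the values of $M^2$ on the nef basis in \eqref{eq:witness-numerics};
\item $H^2M>0$ follows because $H^2M$ cannot vanish on the connected ample cone, together with $MK^2=959>0$.
\end{itemize}
This gives the strict inequality $Mc_2(E_\varepsilon)>0$, and only a scalar inequality is passed to the limit $\varepsilon\to0^+$.

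\textbf{Comparison.} Your route is shorter and uses fewer inputs: no $c_1(E)\ne0$, no \cite[Lemma~3.7]{FL}, and no $KM\in\operatorname{int}\Nef^2(X)$. It also avoids the possible degeneracy of $c_1(E)D_1D_2$ when $c_1(E)$ is merely nef. The paper's route has one advantage: it produces \eqref{eq:weak-Chern-HR} for the untwisted bundle as a byproduct, and this is reused in the proof of Lemma~\ref{lem:supporting-face-rigidity}. If you adopt your proof, that implication has to be established separately there, for instance by the paper's perturbation argument.

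\textbf{Small cleanups.}
\begin{itemize}
\item The detour in Steps~2--3 should be deleted: the guess that $AD_1D_2$ is Lorentzian and the claim that $A$ is ``close to an ample class'' are unjustified and not used.
\item $HA^2>0$ and $HM^2>0$ come from the Hodge--Riemann pair condition and from \eqref{eq:witness-numerics}, not from the Hodge index theorem.
\item No uniformity in $E$ or $\varepsilon$ is needed, since the argument is applied to each twist separately.
\end{itemize}
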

\begin{proof}
We have $M\Gamma_*=M^2N_4=0$ by \eqref{eq:witness-numerics}.
By Lemma~\ref{lem:rank-reduction}, it remains to prove $Mc_2(E)\ge0$
for every globally generated bundle $E$ with $c_2(E)\ne0$.
For such a bundle $E$ and a divisor class $A\in N^1(X)_\R$,
we first show that
\begin{equation}\label{eq:weak-Chern-HR}
 Ac_2(E)=0\quad\Longrightarrow\quad A^2c_1(E)\le0.
\end{equation}
Since $E$ is globally generated, the class $c_2(E)$ is represented by an
effective curve cycle \cite[Example~14.4.3(i)]{Fulton}.
As $c_2(E)\ne0$, ampleness of $K$ gives $Kc_2(E)>0$.

For $\varepsilon>0$, the twist
$E_\varepsilon:=E\otimes\mathcal O_X(\varepsilon K)$ is an ample $\R$-twisted bundle,
and Proposition~\ref{prop:PRT-threefold} gives
$c_2(E_\varepsilon)\in\operatorname{int}\Nef^2(X)$.
Since $K$ is ample and hence pseudoeffective, we obtain
$Kc_2(E_\varepsilon)>0$.
If $Ac_2(E)=0$, we define
\[
 A_\varepsilon:=A-
 \frac{Ac_2(E_\varepsilon)}{Kc_2(E_\varepsilon)}K.
\]
A direct computation gives
\[
 A_\varepsilon c_2(E_\varepsilon)
 =Ac_2(E_\varepsilon)-
 \frac{Ac_2(E_\varepsilon)}{Kc_2(E_\varepsilon)}
 Kc_2(E_\varepsilon)=0.
\]
Then Corollary~\ref{cor:ample-threefold-HR} gives
$A_\varepsilon^2c_1(E_\varepsilon)\le0$.
Moreover, $Ac_2(E_\varepsilon)\to Ac_2(E)=0$ and
$Kc_2(E_\varepsilon)\to Kc_2(E)>0$, so $A_\varepsilon\to A$.
We obtain \eqref{eq:weak-Chern-HR} by passing to the limit.

Since $E$ is globally generated, the class $c_1(E)$ is nef.
Hence $c_1(E)M^2\ge0$ by \eqref{eq:witness-numerics}.
By the definition of the pliant cone and
\cite[Lemma~3.7]{FL}, we have
\[
 c_1(E)^2-c_2(E)=s_{(2)}(E)\in\PL^2(X)\subseteq\Nef^2(X).
\]
Pairing with $K$ gives $Kc_1(E)^2\ge Kc_2(E)>0$, so $c_1(E)\ne0$.
The nonzero nef class $c_1(E)$ is pseudoeffective.
Since $KM\in\operatorname{int}\Nef^2(X)$ by
\eqref{eq:witness-curve-positivity}, we obtain $c_1(E)MK>0$.

If $Mc_2(E)<0$, we set $t:=-Mc_2(E)/(Kc_2(E))>0$.
Then $(M+tK)c_2(E)=0$, but
\[
 (M+tK)^2c_1(E)
 =c_1(E)M^2+2t c_1(E)MK+t^2c_1(E)K^2>0,
\]
contrary to \eqref{eq:weak-Chern-HR}.
Hence $Mc_2(E)\ge0$. This completes the proof.
\end{proof}

We will use this supporting inequality to constrain the Chern limits of
bundles whose second Chern classes approach $M\gamma=0$.

\Needspace{4\baselineskip}
\begin{lemma}\label{lem:supporting-face-rigidity}
Assume that $\Gamma_*\in\PL^2(X)$.
Then there are globally generated bundles $E_j$ of ranks at least two,
with $c_1(E_j)=\sum_{i=1}^5h_{i,j}N_i$, $h_{i,j}\ge0$,
and $\sum_{i=1}^5h_{i,j}>0$, such that
\begin{equation}
 \frac{c_1(E_j)}{\sum_{\ell=1}^5h_{\ell,j}}\longrightarrow N_4,
 \qquad
 \frac{c_2(E_j)}{Kc_2(E_j)}\longrightarrow\frac{\Gamma_*}{K\Gamma_*},
 \qquad
 \frac{Kc_2(E_j)}{\bigl(\sum_{\ell=1}^5h_{\ell,j}\bigr)^2}
 \longrightarrow0.
\end{equation}
\end{lemma}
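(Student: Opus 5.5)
The plan is to extract the bundles $E_j$ directly from the definition of $\PL^2(X)$ and to use the supporting inequality of Lemma~\ref{lem:M-supports-pliant} quantitatively.

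\textbf{Step 1: extracting normalized generators.} By Lemma~\ref{lem:rank-reduction}, $\Gamma_*$ is a limit of finite combinations $\sum_k a_{k}c_2(E_k)$ with $a_k>0$ and $E_k$ globally generated. We discard bundles with $c_2(E_k)=0$; these have rank at least two, since line bundles have $c_2=0$. We normalize each $c_2(E_k)$ by $Kc_2(E_k)>0$, so the normalized classes lie in the compact slice $\{\gamma\in\Nef^2(X):K\gamma=1\}$. By Carathéodory, each approximant uses at most $\dim N^2(X)_\R+1$ generators, and after passing to subsequences the normalized generators converge.

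Each generator satisfies $Mc_2(E_k)\ge0$ by Lemma~\ref{lem:M-supports-pliant}, while $M\Gamma_*=0$. Hence every limit generator $g$ carrying positive weight satisfies $Mg=0$. We then select one sequence $E_j$ along such a limit direction, with
\[
s_j:=\frac{Mc_2(E_j)}{Kc_2(E_j)}\longrightarrow0.
\]
At this point we only know $Mg=0$, not yet that $g$ is proportional to $\Gamma_*$; that is handled in Step 4.

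\textbf{Step 2: convergence of $c_1$ to the ray of $N_4$.} We apply the weak Hodge--Riemann implication \eqref{eq:weak-Chern-HR} to $A:=M-s_jK$, which satisfies $Ac_2(E_j)=0$. This gives
\[
c_1(E_j)M^2-2s_j\,c_1(E_j)MK+s_j^2\,c_1(E_j)K^2\le0.
\]
Write $c_1(E_j)=\sum_i h_{i,j}N_i$ with $h_{i,j}\ge0$ (nefness and \eqref{eq:FS-cones}) and $h_j:=\sum_i h_{i,j}$. Note $h_j>0$ because $c_1(E_j)\neq0$, as shown in the proof of Lemma~\ref{lem:M-supports-pliant}. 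Dividing by $h_j$, the middle and last terms are $O(s_j)$ uniformly. So
\[
\sum_i\frac{h_{i,j}}{h_j}\,M^2N_i\le Cs_j\to0.
\]
Since $M^2N_i>0$ for $i\ne4$ by \eqref{eq:witness-numerics}, we get $h_{i,j}/h_j\to0$ for $i\neq4$, hence $c_1(E_j)/h_j\to N_4$.

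\textbf{Step 3: the third limit.} We use $c_2(E_j)\le c_1(E_j)^2$ in $\Nef^2(X)$, which is $s_{(2)}\in\PL^2(X)$ as in the proof of Lemma~\ref{lem:M-supports-pliant}. This gives
\[
\frac{Kc_2(E_j)}{h_j^2}\le\frac{Kc_1(E_j)^2}{h_j^2}\longrightarrow KN_4^2.
\]
I expect the appendix data to give $N_4^2=0$, so that $N_4$ is fibre-like and the third limit is $0$. If instead $KN_4^2>0$, I would refine the bound via the Hodge--Riemann inequality with $A=N_4$ up to lower order terms.

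\textbf{Step 4: the $c_2$ direction, the main obstacle.} It remains to show that every limit $g$ of $c_2(E_j)/Kc_2(E_j)$ from Step 1 is $\Gamma_*/K\Gamma_*$, i.e.\ that the face $\{M=0\}$ of $\PL^2(X)$ reached by such limits is the ray of $\Gamma_*$.

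The approach I would try first is to pass to the limit in \eqref{eq:weak-Chern-HR} using the first limit. For every divisor $A$, the conditions $Ag=0$ and $g\neq0$ should force $A^2N_4\le0$ after normalization by $h_j$. Combined with $Mg=0$ and the intersection table of the appendix, this should pin $g$ down to the multiples of $N_4M$ in the plane cut out by these constraints. Positivity ($Kg=1$) then fixes the scalar.

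If this does not single out a ray, a fallback is to replace the arbitrary limit generator by the barycenter of the limit generators. Since every limit generator lies in the face $\{M=0\}$ and their weighted sum is $\Gamma_*$, I would mix the finitely many sequences via direct sums. The cross terms $c_1c_1'$ are of order $h^2$ and still lie in the face asymptotically by Step 2, because $N_4\cdot N_4$-type terms are negligible once $N_4^2=0$. This Step 4 is where I expect the real difficulty to lie.
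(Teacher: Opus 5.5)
Your Steps 1 and 2 are sound and differ only mildly from the paper. The paper avoids Carath\'eodory by taking, in each approximating sum $\sum_i a_{j,i}c_2(E_{j,i})$, the summand minimizing $Mc_2/Kc_2$; this ratio is at most $MG_j/KG_j\to0$. The paper also obtains $H=N_4$ only after extracting a limit of $c_1(E_j)/h_j$, whereas your choice $A=M-s_jK$ in \eqref{eq:weak-Chern-HR} gives the convergence along the whole sequence.

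The first genuine gap is Step~3, which rests on a false expectation. The appendix gives $N_4^2K=53$ in \eqref{eq:FS-triple-values}, and $N_4^3=6$ in \eqref{eq:N4-intersection-matrix}. So $Kc_2(E_j)/h_j^2\le Kc_1(E_j)^2/h_j^2\to53$ yields only boundedness. Your fallback cannot repair this. Relation \eqref{eq:weak-Chern-HR} is invariant under rescaling $c_1$ and $c_2$ independently, so no choice of $A$ in it controls $Kc_2(E_j)/h_j^2$; with $A=N_4-t_jK$ it merely constrains $\lim N_4c_2(E_j)/Kc_2(E_j)$.

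The missing idea is to pair the nef classes $c_2(E_j)$ and $c_1(E_j)^2-c_2(E_j)$ with an effective divisor that kills $N_4^2$ but not $\Gamma_*$. By \eqref{eq:FS-exact-certificate} and \eqref{eq:FS-triple-values}, $D_8N_4^2=0$ and $D_8\Gamma_*=6$, so
\[
 \frac{Kc_2(E_j)}{h_j^2}\cdot\frac{D_8c_2(E_j)}{Kc_2(E_j)}
 \le D_8\Bigl(\frac{c_1(E_j)}{h_j}\Bigr)^2\longrightarrow D_8N_4^2=0,
\]
while the second factor on the left tends to $D_8\Gamma_*/K\Gamma_*>0$. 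This needs the limit of the $c_2$-direction, so the third limit has to come after Step~4, not before it.

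In Step~4 you have the right ingredient: the limiting relation $Ag=0\Rightarrow A^2N_4\le0$, obtained from $A_j:=A-\frac{Ac_2(E_j)}{Kc_2(E_j)}K\to A$. But ``pinning $g$ down in the plane cut out by these constraints'' is not an argument, and the step that actually does it is missing. It is a polarization at $M$. If $Ag=0$, then $(M+tA)g=0$ for every $t\in\R$, and $N_4M^2=0$, $N_4M=\Gamma_*$ give
\[
 0\ge N_4(M+tA)^2=2t\,\Gamma_*A+t^2N_4A^2.
\]
Letting $t\to0^{\pm}$ forces $\Gamma_*A=0$. Thus $g^\perp\subseteq\Gamma_*^\perp$, and applying this to $A-(Ag)K$ together with $Kg=1$ gives $g=\Gamma_*/K\Gamma_*$.

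Your direct-sum fallback would not work either. It again assumes $N_4^2=0$, whereas the cross terms satisfy $c_1(E)c_1(E')\approx hh'N_4^2$ with $MN_4^2=N_4\Gamma_*=12>0$ by \eqref{eq:N4-intersection-matrix}. So the mixed classes do not stay asymptotically in the face $\{M=0\}$.
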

\begin{proof}
By Lemma~\ref{lem:rank-reduction}, the cone $\PL^2(X)$ is the closed convex
cone generated by the classes $c_2(E)$ of globally generated bundles.
Since $\Gamma_*/(K\Gamma_*)\in\PL^2(X)$, there are finite sums
\[
 G_j:=\sum_i a_{j,i}c_2(E_{j,i})\longrightarrow\frac{\Gamma_*}{K\Gamma_*},
 \qquad a_{j,i}>0,
\]
where each $E_{j,i}$ is globally generated and $c_2(E_{j,i})\ne0$.
In particular, $\operatorname{rk}(E_{j,i})\ge2$.
Each $c_2(E_{j,i})$ is represented by a nonzero effective curve cycle,
so ampleness of $K$ gives $Kc_2(E_{j,i})>0$.
Lemma~\ref{lem:M-supports-pliant} also gives $Mc_2(E_{j,i})\ge0$.
Moreover, $KG_j\to1$ and $MG_j\to0$.

We choose $E_j$ so that
$\frac{Mc_2(E_j)}{Kc_2(E_j)}
=\min_i\bigl\{\frac{Mc_2(E_{j,i})}{Kc_2(E_{j,i})}\bigr\}$.
Then
\[
 0\le\frac{Mc_2(E_j)}{Kc_2(E_j)}
 \le\sum_i\frac{a_{j,i}Kc_2(E_{j,i})}{KG_j}
             \frac{Mc_2(E_{j,i})}{Kc_2(E_{j,i})}
 =\frac{MG_j}{KG_j}\longrightarrow0.
\]
By compactness of $\{\eta\in\Nef^2(X):K\eta=1\}$,
we pass to a subsequence such that
\[
 \frac{c_2(E_j)}{Kc_2(E_j)}\longrightarrow\Gamma,
 \qquad K\Gamma=1,\qquad M\Gamma=0.
\]

Since $E_j$ is globally generated, the class $c_1(E_j)$ is nef.
We write $c_1(E_j)=\sum_{i=1}^5h_{i,j}N_i$, with $h_{i,j}\ge0$.
The class $c_1(E_j)^2-c_2(E_j)$ is pliant and hence nef by
\cite[Lemma~3.7]{FL}. Pairing with $K$ gives
\[
 0<Kc_2(E_j)\le Kc_1(E_j)^2.
\]
Thus $c_1(E_j)\ne0$ and $\sum_{\ell=1}^5h_{\ell,j}>0$.
The vectors
$\bigl(\frac{h_{1,j}}{\sum_{\ell=1}^5h_{\ell,j}},\ldots,
\frac{h_{5,j}}{\sum_{\ell=1}^5h_{\ell,j}}\bigr)$
have nonnegative entries summing to one, so they lie in the compact
standard simplex
\[
 \left\{(x_1,\ldots,x_5)\in\R_{\ge0}^5:
 \sum_{i=1}^5x_i=1\right\}.
\]
Within the subsequence along which
$c_2(E_j)/(Kc_2(E_j))\to\Gamma$,
we therefore choose a further subsequence indexed by $j_k$ such that
\[
 \frac{h_{i,j_k}}{\sum_{\ell=1}^5h_{\ell,j_k}}
 \longrightarrow h_i\quad(i=1,\ldots,5),
 \qquad h_i\ge0,\qquad \sum_{i=1}^5h_i=1.
\]
Relabelling $j_k$ as $j$, we obtain
\[
 \frac{c_1(E_j)}{\sum_{\ell=1}^5h_{\ell,j}}\longrightarrow
 H:=\sum_{i=1}^5h_iN_i.
\]

We claim that relation \eqref{eq:weak-Chern-HR} holds for the limit pair
$(\Gamma,H)$.
Indeed, suppose that a divisor class $A$ satisfies $A\Gamma=0$.
We define $A_j:=A-\frac{Ac_2(E_j)}{Kc_2(E_j)}K$. Then
\[
 A_j\longrightarrow A,\qquad A_jc_2(E_j)=0,\qquad
 A_j^2\frac{c_1(E_j)}{\sum_{\ell=1}^5h_{\ell,j}}\le0
\]
by \eqref{eq:weak-Chern-HR}. Passing to the limit gives
\begin{equation}\label{eq:limiting-Chern-HR}
 A\Gamma=0\quad\Longrightarrow\quad A^2H\le0.
\end{equation}

Inequality \eqref{eq:limiting-Chern-HR} first determines the limit
$H$ of the normalized first Chern classes.
Since $M\Gamma=0$, we can take $A=M$ in
\eqref{eq:limiting-Chern-HR}. Using $M^2N_4=0$ and
$M^2N_i>0$ for $i\ne4$ from \eqref{eq:witness-numerics}, we obtain
\[
 0\ge M^2H=\sum_{i\ne4}h_i(M^2N_i)\ge0.
\]
Every summand must vanish. Thus $h_i=0$ for $i\ne4$, and
$\sum_i h_i=1$ gives $h_4=1$. Therefore
\[
 H=N_4.
\]

Inequality \eqref{eq:limiting-Chern-HR}, together with $H=N_4$,
also determines the second Chern limit $\Gamma$.
We claim that every divisor class $A$
with $A\Gamma=0$ also satisfies $A\Gamma_*=0$.
Indeed, for such an $A$, we have
$(M+tA)\Gamma=0$ for every $t\in\R$. Applying
\eqref{eq:limiting-Chern-HR} with $H=N_4$ and using
$N_4M^2=0$ and $N_4M=\Gamma_*$ gives
\[
 \begin{aligned}
 0&\ge N_4(M+tA)^2\\
  &=N_4M^2+2tN_4MA+t^2N_4A^2\\
  &=2t\Gamma_*A+t^2N_4A^2.
 \end{aligned}
\]
Dividing by $t>0$ and letting $t\to0^+$ gives $\Gamma_*A\le0$.
Dividing by $t<0$ reverses the inequality, and letting $t\to0^-$
gives $\Gamma_*A\ge0$. Thus $\Gamma_*A=0$.
By the construction of $\Gamma$, we have $K\Gamma=1$.
For any divisor class $A$, the class $A-(A\Gamma)K$ therefore has zero
intersection with $\Gamma$. By the preceding claim, we have
\[
 0=(A-(A\Gamma)K)\Gamma_*
   =A\Gamma_*-(A\Gamma)K\Gamma_*.
\]
It follows that $A\Gamma=A\Gamma_*/(K\Gamma_*)$ for every divisor class $A$.
By numerical equivalence of curve classes, this means
$\Gamma=\Gamma_*/(K\Gamma_*)$.

It remains to prove
$Kc_2(E_j)/(\sum_{\ell=1}^5h_{\ell,j})^2\to0$.
We use the effective divisor $D_8$.
The values in \eqref{eq:FS-exact-certificate} and
\eqref{eq:FS-triple-values} give
\begin{equation}\label{eq:witness-D8}
 D_8N_4^2=0,\qquad D_8\Gamma_*>0.
\end{equation}
Both $c_2(E_j)$ and $c_1(E_j)^2-c_2(E_j)$ are pliant and hence nef.
Since $D_8$ is effective, pairing these two classes with $D_8$ gives
$0\le D_8c_2(E_j)\le D_8c_1(E_j)^2$, and thus
\[
 0\le\frac{Kc_2(E_j)}{\bigl(\sum_{\ell=1}^5h_{\ell,j}\bigr)^2}
       \frac{D_8c_2(E_j)}{Kc_2(E_j)}
 \le D_8\left(\frac{c_1(E_j)}{\sum_{\ell=1}^5h_{\ell,j}}\right)^2.
\]
Since $c_2(E_j)/(Kc_2(E_j))\to\Gamma_*/(K\Gamma_*)$
and $c_1(E_j)/(\sum_{\ell=1}^5h_{\ell,j})\to N_4$,
by \eqref{eq:witness-D8} we obtain
\[
 \frac{D_8c_2(E_j)}{Kc_2(E_j)}
 \longrightarrow\frac{D_8\Gamma_*}{K\Gamma_*}>0,\qquad
 D_8\left(\frac{c_1(E_j)}{\sum_{\ell=1}^5h_{\ell,j}}\right)^2
 \longrightarrow D_8N_4^2=0.
\]
In particular, $D_8c_2(E_j)>0$ for all sufficiently large $j$, so
\[
 0\le\frac{Kc_2(E_j)}{\bigl(\sum_{\ell=1}^5h_{\ell,j}\bigr)^2}
 \le D_8\left(\frac{c_1(E_j)}{\sum_{\ell=1}^5h_{\ell,j}}\right)^2
       \frac{Kc_2(E_j)}{D_8c_2(E_j)}
 \longrightarrow0.\qedhere
\]
\end{proof}

\Needspace{4\baselineskip}
\paragraph{A decomposition of the limit classes}

The following lemma turns the growth condition in
Lemma~\ref{lem:supporting-face-rigidity} into a factorization of $\Gamma_*$ by a
pseudoeffective divisor.

\Needspace{4\baselineskip}
\begin{lemma}\label{lem:small-chern}
Let $X$ be a smooth projective complex threefold, let $K$ be an
integral ample divisor, and let $E_j$ be globally generated vector
bundles of ranks $r_j\ge2$ with $Kc_2(E_j)>0$.
Assume that $H\in N^1(X)_\R$, $\Gamma\in N^2(X)_\R$,
and $m_j>0$ satisfy $H^2K>0$ and
\[
 \frac{c_1(E_j)}{m_j}\longrightarrow H,\qquad
 \frac{c_2(E_j)}{Kc_2(E_j)}\longrightarrow\Gamma,\qquad
 \frac{Kc_2(E_j)}{m_j^2}\longrightarrow0.
\]
Then $\Gamma=H B$ for some $B\in\Psef^1(X)$.
\end{lemma}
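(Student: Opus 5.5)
The plan is to realize each $c_2(E_j)$ geometrically as a curve lying on a divisor in $|\det E_j|$. Then the smallness condition $Kc_2(E_j)/m_j^2\to0$ should force, in the limit, the curve class to be the intersection of $H$ with a pseudoeffective divisor.

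\textbf{Degeneracy loci.} Since $E_j$ is globally generated of rank $r_j\ge2$, I would choose $r_j-1$ general sections. Their degeneracy locus is a (possibly empty) curve $C_j$ of pure dimension one whose class is $c_2(E_j)$ \cite[Example~14.4.3]{Fulton}, and there is an exact sequence
\[
0\longrightarrow\mathcal O_X^{\,r_j-1}\longrightarrow E_j\longrightarrow \mathcal I_{C_j}\otimes\det E_j\longrightarrow0 .
\]
A general section of $\mathcal I_{C_j}\otimes\det E_j$, coming from one further section of $E_j$, cuts out an effective divisor $S_j\in|\det E_j|$ containing $C_j$, so $[S_j]=c_1(E_j)$. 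I would work with the complete intersection structure: $c_2(E_j)$ is represented by a curve on $S_j$. I would then show that the class is of the form $c_1(E_j)\cdot B_j+R_j$, where $B_j$ is pseudoeffective and the remainder $R_j$ is controlled by the deviation of $c_1(E_j)^2-c_2(E_j)$ from its expected value.

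\textbf{A Hodge-index decomposition on the surfaces $S_j$.} The concrete choice is the following. Restrict $K$ and $c_1(E_j)$ to the components of $S_j$. On each component (normalized and resolved if necessary), decompose the effective curve $C_j$ orthogonally, with respect to the intersection form, into its projection onto the span of $c_1(E_j)|_{S_j}$ and a remainder. The projection coefficient should be a nonnegative multiple of $Kc_2(E_j)/Kc_1(E_j)^2$. The remainder has square bounded above by Hodge index, and its pushforward to $X$ is compared with the nef classes $c_2(E_j)$ and $c_1(E_j)^2-c_2(E_j)$, which are nef by \cite[Lemma~3.7]{FL}.

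\textbf{Normalization and limit.} After normalizing by $Kc_2(E_j)$ and using $c_1(E_j)/m_j\to H$ with $H^2K>0$, I would pass to the limit. Writing $\Gamma_j:=c_2(E_j)/(Kc_2(E_j))=(c_1(E_j)/m_j)\cdot B_j'$ with $B_j':=m_j\,[S'_j]/(Kc_2(E_j))$ effective, where $S'_j$ is the part of the divisor carrying the curve, I need two things. First, the $B_j'$ stay bounded, by pairing with $K\cdot H$ and using $H^2K>0$. Second, any limit $B$ is pseudoeffective and satisfies $\Gamma=HB$, by continuity of the intersection product. Boundedness should come from $K\cdot(c_1/m_j)\cdot B'_j=1$ together with positivity of $KH$ on the pseudoeffective cone; this needs $H$ to be big-ish in the direction of $K$, which is where $H^2K>0$ enters.

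\textbf{Main obstacle.} I expect the hardest step to be producing the effective divisors $B_j'$, with $c_2(E_j)$ genuinely equal to $c_1(E_j)\cdot B'_j$ up to an error that is $o(Kc_2(E_j))$. The naive inclusion $C_j\subset S_j$ only gives an effective curve on a surface in $|\det E_j|$, not a product with $c_1$. My first attempt would use the exact sequence above twice, with a second general choice of sections, so as to write $C_j$ as a component of $S_j\cap S'_j$ for two divisors in suitable linear systems. I would then bound the residual curve using $0\le c_1(E_j)^2-c_2(E_j)$ and the hypothesis $Kc_2(E_j)/m_j^2\to0$.
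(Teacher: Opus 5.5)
Your proposal has a genuine gap at exactly the step you flag as the main obstacle, and none of the mechanisms you suggest can close it.

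\textbf{Why your candidates fail.} Every divisor you build from sections of $E_j$ lies in $|\det E_j|$. Writing $C_j$ as a component of $S_j\cap S_j'$ therefore only gives $c_2(E_j)=c_1(E_j)^2-R_j$, with $R_j$ the residual curve. But $K(c_1(E_j)^2-c_2(E_j))/m_j^2\to H^2K>0$ while $Kc_2(E_j)/m_j^2\to0$. So both $c_1(E_j)^2$ and $R_j$ have $K$-degree of order $m_j^2\gg Kc_2(E_j)$, and the error is not $o(Kc_2(E_j))$.

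Passing to a component $S_j'$ gives no control either. When $S_j$ is irreducible, $B_j'=m_jc_1(E_j)/(Kc_2(E_j))$, and then $K\cdot(c_1(E_j)/m_j)\cdot B_j'=Kc_1(E_j)^2/Kc_2(E_j)\to\infty$.

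The orthogonal projection on $S_j$ bounds the square of the remainder by Hodge index. It gives no reason for the pushforward of the remainder to be asymptotically $H$ times a pseudoeffective class. That factorization has real content: it is what rules out $\Gamma_*=N_4M$ on the Fujino--Sato threefold.

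Your boundedness argument also fails in the intended application. There $H=N_4$ and $KN_4D_6=0$ for the nonzero effective divisor $D_6$, so $KH$ is not strictly positive on $\Psef^1(X)\setminus\{0\}$.

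\textbf{The missing idea is instability.} First reduce to globally generated rank-two reflexive sheaves by quotienting by $r_j-2$ general sections. The hypotheses give
\[
K\Delta(E_j)/m_j^2=4Kc_2(E_j)/m_j^2-(c_1(E_j)/m_j)^2K\to-H^2K<0,
\]
so by Bogomolov's inequality $E_j$ is $\mu_{K^2}$-unstable for large $j$. The saturated destabilizing line subsheaf yields an exact sequence $0\to\det(E_j)\otimes\mathcal O_X(-B_j)\to E_j\to I_{Z_j}(B_j)\to0$, with $I_{Z_j}(B_j)$ globally generated. Consequently:
\begin{itemize}
\item $B_j$ is effective without fixed part;
\item $B_j^2-[Z_j]$ is effective;
\item $c_2(E_j)=c_1(E_j)B_j-(B_j^2-[Z_j])$.
\end{itemize}
This $B_j$ is the small divisor your argument lacks. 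Your geometric picture was right, but with the wrong divisor: the zero locus of a general section of $E_j$ lies on a member of $|B_j|$, not merely of $|\det E_j|$.

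The required estimates then follow:
\begin{itemize}
\item Hodge index, together with the destabilizing inequality $(c_1(E_j)-2B_j)K^2>0$, gives $Kc_2(E_j)\ge\tfrac12c_1(E_j)B_jK$.
\item For small $\varepsilon>0$, Hodge index also gives $(c_1(E_j)/m_j-\varepsilon K)B_jK\ge0$, hence $\varepsilon K^2\cdot m_jB_j\le2Kc_2(E_j)$. Since the $K^2$-degree is a norm on $\Psef^1(X)$, the classes $m_jB_j/(Kc_2(E_j))$ are bounded.
\item Finally, $KB_j^2/Kc_2(E_j)\le4Kc_2(E_j)/(c_1(E_j)^2K)\to0$, which is exactly the $o(Kc_2(E_j))$ error estimate you need.
\end{itemize}
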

\begin{proof}
We claim that it suffices to construct effective divisors $B_j$ such
that $m_jB_j/(Kc_2(E_j))$ has a convergent subsequence and
$(c_2(E_j)-c_1(E_j)B_j)/(Kc_2(E_j))\to0$.
Indeed, we pass to a subsequence with $m_jB_j/(Kc_2(E_j))\to B$.
Since each $B_j$ is effective, the limit $B$ lies in $\Psef^1(X)$.
Using $c_1(E_j)/m_j\to H$, we obtain
\[
 \frac{c_2(E_j)}{Kc_2(E_j)}
 =\underbrace{\frac{c_1(E_j)}{m_j}\frac{m_jB_j}{Kc_2(E_j)}}
     _{\longrightarrow H B}
  +\underbrace{\frac{c_2(E_j)-c_1(E_j)B_j}{Kc_2(E_j)}}
     _{\longrightarrow 0}
 \longrightarrow H B.
\]
By hypothesis, we also have $c_2(E_j)/(Kc_2(E_j))\to\Gamma$,
and this gives $\Gamma=H B$.

To construct the desired divisors $B_j$, we first reduce to the case where
the sheaves $E_j$ are reflexive of rank two.
The global sections of $E_j$ span every fibre, so $h^0(X,E_j)\ge r_j$
and a general choice of $r_j-2$ global sections is linearly
independent at a general point of $X$.
These sections give an exact sequence
\[
 0\longrightarrow\mathcal O_X^{\,r_j-2}
 \longrightarrow E_j\longrightarrow F_j\longrightarrow0,
\]
where $F_j$ is globally generated of rank two, with
$c_1(F_j)=c_1(E_j)$ and $c_2(F_j)=c_2(E_j)$.
For $r_j=2$, we simply take $F_j=E_j$.
For $r_j\ge3$, general sections become linearly dependent only in
codimension at least three, so the quotient $F_j$ is locally free
away from finitely many points.
At each such point $x$, the exact sequence and the depth lemma give
$\operatorname{depth}_{\mathcal O_{X,x}}((F_j)_x)\ge2$.
Hence the sheaf $F_j$ is reflexive
\cite[Lemmas~10.72.6 and~31.13.10]{Stacks}.
Replacing $E_j$ by $F_j$, we may therefore assume that each $E_j$
is a globally generated reflexive sheaf of rank two, since its first
two Chern classes are unchanged.

We write $\Delta(E_j):=4c_2(E_j)-c_1(E_j)^2$ for the discriminant.
By hypothesis,
\[
 \frac{K\Delta(E_j)}{m_j^2}
 =4\frac{Kc_2(E_j)}{m_j^2}-\left(\frac{c_1(E_j)}{m_j}\right)^2K
 \longrightarrow-H^2K<0.
\]
After discarding finitely many terms, we have
\begin{equation}\label{eq:negative-discriminant}
 K\Delta(E_j)<0.
\end{equation}
The classical Bogomolov inequality therefore implies that every
$E_j$ is $\mu_{K^2}$-unstable.

Let $L_j\subset E_j$ be its saturated rank-one destabilizing subsheaf.
Since $E_j$ is reflexive, the saturated subsheaf $L_j$ is reflexive
\cite[Lemma~31.13.7]{Stacks}, hence a line bundle.
We choose a divisor $B_j$ such that
$\mathcal O_X(B_j)=\det(E_j)\otimes L_j^{-1}$.
The quotient $E_j/L_j$ is torsion-free of rank one with determinant
$\mathcal O_X(B_j)$, so it has the form $I_{Z_j}(B_j)$.
We obtain
\[
 0\longrightarrow\det(E_j)\otimes\mathcal O_X(-B_j)
 \longrightarrow E_j\longrightarrow I_{Z_j}(B_j)\longrightarrow0,
\]
where $Z_j$ has codimension at least two and $I_{Z_j}$ is its ideal sheaf.
Since $L_j$ destabilizes $E_j$, we have
\begin{equation}\label{eq:destabilizing-line-slope}
 (c_1(E_j)-2B_j)K^2
 =2\bigl(\mu_{K^2}(L_j)-\mu_{K^2}(E_j)\bigr)>0.
\end{equation}

\noindent Since $I_{Z_j}(B_j)$ is globally generated, its nonzero
global sections define a nonempty linear subsystem of $|B_j|$
with no base points outside $Z_j$.
Thus we may choose $B_j$ effective, and
$\operatorname{codim}Z_j\ge2$ implies that this subsystem has no fixed divisor.
Two general members therefore meet properly.
The class of their intersection cycle is $B_j^2$.
Their intersection contains $Z_j$ as a closed subscheme, so
$B_j^2-[Z_j]$ is effective, where $[Z_j]$ denotes the
codimension-two cycle of $Z_j$
\cite[Lemmas~42.29.5 and~42.10.4]{Stacks}.
In particular,
\begin{equation}\label{eq:Zbound}
 0\le K[Z_j]\le KB_j^2.
\end{equation}
By the exact sequence above, we have
\begin{equation}\label{eq:c2-decomp}
 c_2(E_j)=(c_1(E_j)-B_j)B_j+[Z_j]
        =c_1(E_j)B_j-B_j^2+[Z_j].
\end{equation}

\noindent Thus
\[
\begin{aligned}
 (c_1(E_j)-2B_j)^2K
 &=c_1(E_j)^2K-4c_1(E_j)B_jK+4B_j^2K\\
 &=c_1(E_j)^2K-4Kc_2(E_j)+4K[Z_j]\\
 &=-K\Delta(E_j)+4K[Z_j]>0,
\end{aligned}
\]
where the second equality follows from \eqref{eq:c2-decomp},
and the last inequality follows from \eqref{eq:negative-discriminant}
and $K[Z_j]\ge0$ in \eqref{eq:Zbound}.

We write
$c_1(E_j)-2B_j=a_jK+S_j$ and $B_j=b_jK+T_j$, where
$S_jK^2=T_jK^2=0$.
The Hodge index theorem and Cauchy--Schwarz give
$S_jT_jK\ge-\sqrt{(-S_j^2K)(-T_j^2K)}$, and thus
\[
\begin{aligned}
 (c_1(E_j)-2B_j)B_jK
 &=a_jb_jK^3+S_jT_jK\\
 &\ge a_jb_jK^3-\sqrt{(-S_j^2K)(-T_j^2K)}.
\end{aligned}
\]
By \eqref{eq:destabilizing-line-slope}, we have
$a_jK^3=(c_1(E_j)-2B_j)K^2>0$, so $a_j>0$.
The effectivity of $B_j$ gives $b_jK^3=B_jK^2\ge0$, so $b_j\ge0$.
The positivity of $(c_1(E_j)-2B_j)^2K=a_j^2K^3+S_j^2K$ gives
$-S_j^2K<a_j^2K^3$.
Similarly, \eqref{eq:Zbound} gives $B_j^2K=b_j^2K^3+T_j^2K\ge0$,
hence $-T_j^2K\le b_j^2K^3$.
Combining these inequalities gives
\[
 (c_1(E_j)-2B_j)B_jK
 \ge a_jb_jK^3-\sqrt{(-S_j^2K)(-T_j^2K)}\ge0.
\]
Thus $B_j^2K\le\tfrac12c_1(E_j)B_jK$, and
\begin{equation}\label{eq:degree-lower}
 Kc_2(E_j)=c_1(E_j)B_jK-B_j^2K+K[Z_j]
 \ge\tfrac12c_1(E_j)B_jK.
\end{equation}

\noindent Each class $c_1(E_j)/m_j$ is nef. Hence the limit $H$ is nef and
satisfies $H K^2\ge0$. Since $H^2K>0$ by hypothesis, the Hodge index theorem gives
$HK^2\ge\sqrt{(H^2K)K^3}>0$.
We can therefore fix $\varepsilon>0$ sufficiently small that, for all
sufficiently large $j$,
\[
 \left(\frac{c_1(E_j)}{m_j}-\varepsilon K\right)^2K>0,
 \qquad
 \left(\frac{c_1(E_j)}{m_j}-\varepsilon K\right)K^2>0.
\]
We write
\[
 \frac{c_1(E_j)}{m_j}-\varepsilon K=a'_jK+S'_j,
 \qquad S'_jK^2=0,
\]
and use the decomposition $B_j=b_jK+T_j$ above.
The preceding inequalities give $a'_j>0$ and
$-(S'_j)^2K<(a'_j)^2K^3$.
Moreover, $b_j\ge0$ and $-T_j^2K\le b_j^2K^3$, as shown above.
By the Hodge index theorem and Cauchy--Schwarz, we have
\[
\begin{aligned}
 \left(\frac{c_1(E_j)}{m_j}-\varepsilon K\right)B_jK
 &=a'_jb_jK^3+S'_jT_jK\\
 &\ge a'_jb_jK^3-\sqrt{\bigl(-(S'_j)^2K\bigr)(-T_j^2K)}\\
 &\ge0.
\end{aligned}
\]
Together with \eqref{eq:degree-lower}, this yields
\[
 0\le K^2\frac{m_jB_j}{Kc_2(E_j)}
 \le\frac{c_1(E_j)B_jK}{\varepsilon Kc_2(E_j)}
 \le\frac2\varepsilon.
\]
On $\Psef^1(X)$, the $K^2$-degree is the restriction of a norm on
$N^1(X)_\R$ \cite[Theorem~1.4(3)]{FL}.
This inequality therefore makes $m_jB_j/(Kc_2(E_j))$ a bounded sequence
in a finite-dimensional space. Passing to a subsequence, we obtain
$m_jB_j/(Kc_2(E_j))\to B$.

For the remaining estimate, \eqref{eq:c2-decomp} gives
$c_2(E_j)-c_1(E_j)B_j=-(B_j^2-[Z_j])$.
It therefore suffices to prove that
$(B_j^2-[Z_j])/(Kc_2(E_j))\to0$.
On $\Psef^2(X)$, the $K$-degree is the restriction of a norm on
$N^2(X)_\R$ \cite[Theorem~1.4(3)]{FL}.
Since these classes are effective, to prove $(B_j^2-[Z_j])/(Kc_2(E_j))\to0$,
it suffices to prove that their $K$-degrees tend to zero.

The Hodge index theorem gives
$(c_1(E_j)B_jK)^2\ge(c_1(E_j)^2K)(B_j^2K)$.
Combining this with $c_1(E_j)B_jK\le2Kc_2(E_j)$ from \eqref{eq:degree-lower},
we obtain
\[
 0\le\frac{K(B_j^2-[Z_j])}{Kc_2(E_j)}
 \le\frac{KB_j^2}{Kc_2(E_j)}
 \le\frac{4Kc_2(E_j)}{c_1(E_j)^2K}
 =\frac{4Kc_2(E_j)/m_j^2}{(c_1(E_j)/m_j)^2K}
 \longrightarrow0.
\]
By the reduction at the beginning of the proof, the lemma follows.
\end{proof}

We now apply this factorization to the Fujino--Sato threefold. The matrix
\eqref{eq:N4-intersection-matrix}, the basis change
\eqref{eq:FS-psef-basis}, and the values in
\eqref{eq:FS-triple-values} give, for the map
$N_4\cup-\colon N^1(X)_\R\to N^2(X)_\R$,
\begin{equation}\label{eq:N4-basic-data}
 \ker(N_4\cup-)=\R D_6,\qquad
 N_4^2K>0.
\end{equation}
The positivity allows us to apply Lemma~\ref{lem:small-chern}
with $H=N_4$, while the kernel determines all possible divisors $B$
in the resulting factorization.

\Needspace{4\baselineskip}
\begin{theorem}
On the Fujino--Sato threefold $X_{FS}$, the class
$\Gamma_*=N_4M$ belongs to
$\croot^2(X_{FS})\setminus\PL^2(X_{FS})$. In particular,
\[
 \PL^2(X_{FS})\subsetneq\croot^2(X_{FS}).
\]
\end{theorem}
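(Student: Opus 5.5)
Membership $\Gamma_*\in\croot^2(X_{FS})$ is exactly Lemma~\ref{lem:witness-LYZ}, so the content of the theorem is $\Gamma_*\notin\PL^2(X_{FS})$. We argue by contradiction and assume $\Gamma_*\in\PL^2(X_{FS})$.

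Lemma~\ref{lem:supporting-face-rigidity} then supplies globally generated bundles $E_j$ of rank at least two with $Kc_2(E_j)>0$. Put $m_j:=\sum_{\ell}h_{\ell,j}>0$. The lemma gives
\[
 c_1(E_j)/m_j\to N_4,\qquad
 c_2(E_j)/(Kc_2(E_j))\to\Gamma:=\Gamma_*/(K\Gamma_*),\qquad
 Kc_2(E_j)/m_j^2\to0.
\]
Since $N_4^2K>0$ by \eqref{eq:N4-basic-data}, Lemma~\ref{lem:small-chern} applies with $H=N_4$. It yields $B\in\Psef^1(X)$ with
\[
 \Gamma_*=(K\Gamma_*)\,N_4B=N_4B',\qquad B':=(K\Gamma_*)B\in\Psef^1(X),
\]
because $K\Gamma_*>0$.

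Next we compare this with the defining factorization $\Gamma_*=N_4M$. We get $N_4(B'-M)=0$. By \eqref{eq:N4-basic-data}, $\ker(N_4\cup-)=\R D_6$, so
\[
 B'=M+cD_6\quad\text{for some }c\in\R.
\]
So the contradiction must come from showing that no class $M+cD_6$ is pseudoeffective. By \eqref{eq:FS-cones}, $\Psef^1(X)=\Cone(D_1,D_5,D_6,D_7,D_8)$. Its dual is the cone of movable curves, which for a smooth projective variety is generated by classes such as $N_iN_k$ or more general curve classes.

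Our plan is to find curve classes $C_1,C_2\in\Nef^2(X)$, for instance products of nef divisors $N_iN_k$ or $N_4$ times a suitable class, with the following properties:
\[
 D_6C_1>0,\quad D_6C_2<0,\qquad
 \frac{MC_1}{D_6C_1}<\frac{MC_2}{D_6C_2}\ \text{(with signs arranged)}.
\]
Concretely, we want $(M+cD_6)C_1<0$ for all $c\le c_0$ and $(M+cD_6)C_2<0$ for all $c\ge c_0$. Since every pseudoeffective divisor pairs nonnegatively with nef curve classes, this excludes every value of $c$. Equivalently, we can work directly in coordinates with respect to $D_1,D_5,D_6,D_7,D_8$. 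We would expand $M$ in this basis using \eqref{eq:FS-psef-basis} and check that, for every $c$, the coefficients of $M+cD_6$ on $D_1,D_5,D_7,D_8$ are not all nonnegative. Here it matters that $D_6$ only changes one coordinate: the simplicial description of $\Psef^1$ as a cone on five generators in the five-dimensional space $N^1(X)_\R$ makes pseudoeffectivity simply the nonnegativity of all five coordinates. So it suffices to show that some coefficient of $M$ on $D_1,D_5,D_7,D_8$ is negative.

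The main obstacle is this last explicit check, which relies on the appendix data. We would read off from the basis change \eqref{eq:FS-psef-basis} the expression of $M=-3N_1+2N_2-4N_3+N_4+6N_5$ in terms of the $D_i$, and then verify that at least one of its coordinates other than the $D_6$-coordinate is negative. Once that is done, $B'$ cannot lie in $\Psef^1(X)$, which contradicts Lemma~\ref{lem:small-chern}. Hence $\Gamma_*\notin\PL^2(X_{FS})$, and together with Lemma~\ref{lem:witness-LYZ} this gives the strict inclusion.
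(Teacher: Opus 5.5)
Your argument is the paper's proof: Lemmas~\ref{lem:supporting-face-rigidity} and~\ref{lem:small-chern} (with $H=N_4$ and $N_4^2K>0$) give $\Gamma_*=N_4B'$ with $B'\in\Psef^1(X)$, the kernel $\R D_6$ forces $B'=M+cD_6$, and simpliciality of $\Psef^1(X)$ reduces the contradiction to finding a negative non-$D_6$ coordinate of $M$. The check you left open does go through, since substituting \eqref{eq:FS-psef-basis} gives $M=5D_1+7D_5+5D_6+2D_7-2D_8$, whose $D_8$-coefficient $-2$ does not depend on $c$; you should discard the preliminary two-curve plan, because $D_6$ is effective and so no $C_2\in\Nef^2(X)$ can satisfy $D_6C_2<0$.
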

\begin{proof}
By Lemma~\ref{lem:witness-LYZ}, $\Gamma_*\in\croot^2(X)$.
Suppose that $\Gamma_*\in\PL^2(X)$.
By Lemmas~\ref{lem:supporting-face-rigidity} and~\ref{lem:small-chern}, we have
\begin{equation}\label{eq:forbidden-factorization}
 \frac{\Gamma_*}{K\Gamma_*}=N_4B
 \qquad\text{for some }B\in\Psef^1(X).
\end{equation}
Since $\Gamma_*=N_4M$, equation \eqref{eq:forbidden-factorization}
implies $B-M/(K\Gamma_*)\in\ker(N_4\cup-)$.
By \eqref{eq:N4-basic-data}, this kernel is $\R D_6$, and thus
$B=M/(K\Gamma_*)+\lambda D_6$ for some $\lambda\in\R$.
Using \eqref{eq:FS-psef-basis}, we express $M$ in the
pseudoeffective basis as
\begin{equation}\label{eq:M-psef-coordinates}
 M=5D_1+7D_5+5D_6+2D_7-2D_8.
\end{equation}
\noindent Dividing \eqref{eq:M-psef-coordinates} by $K\Gamma_*$ and using
$B=M/(K\Gamma_*)+\lambda D_6$ gives
\[
 B=\frac5{K\Gamma_*}D_1+\frac7{K\Gamma_*}D_5
 +\left(\frac5{K\Gamma_*}+\lambda\right)D_6
 +\frac2{K\Gamma_*}D_7-\frac2{K\Gamma_*}D_8.
\]
\noindent By \eqref{eq:FS-cones}, the cone $\Psef^1(X)$ is generated by
the basis $D_1$, $D_5$, $D_6$, $D_7$, $D_8$.
Since $K\Gamma_*>0$, the negative $D_8$-coefficient contradicts
$B\in\Psef^1(X)$ in \eqref{eq:forbidden-factorization}.
Hence $\Gamma_*\notin\PL^2(X)$, and
Theorem~\ref{thm:pliant-inclusion} gives the strict inclusion.
\end{proof}

\Needspace{6\baselineskip}
\section{K\"ahler package from supercritical LYZ pairs}\label{sp:sec:small-phase}
In this section, we study the K\"ahler package for the Chern classes of
supercritical LYZ root classes. We first establish pointwise
Hodge--Riemann relations for the associated Chern forms, and then use
the techniques in \cite[Propositions~3.1--3.2]{DN} to prove the corresponding
cohomological results.

Let $([\alpha],[\omega])$ be a cohomological supercritical LYZ pair on a compact
K\"ahler $n$-fold $X$, and let $\mathcal R:=\Rpair{[\alpha]}{[\omega]}$.
We say that the Chern classes $c_m(\mathcal R)$ satisfy the K\"ahler package
if the following three properties hold for every bidegree $(p,q)$,
where $p,q\ge0$ and $1\le p+q\le n$.
\begin{enumerate}[label=(\arabic*),leftmargin=*]
\item \emph{Hard Lefschetz.}
Multiplication by $c_{n-p-q}(\mathcal R)$ gives an isomorphism
\[
 H^{p,q}(X)\longrightarrow H^{n-q,n-p}(X).
\]

\item \emph{Hodge--Riemann relations.}
We define the primitive space by
\[
 \mathcal P^{p,q}(X,\mathcal R):=\ker\bigl(c_{n-p-q+1}(\mathcal R)\cup-\colon
 H^{p,q}(X)\longrightarrow H^{n-q+1,n-p+1}(X)\bigr),
\]
where Chern classes of degree greater than $n-1$ are zero.
The same definition gives $\mathcal P^{0,0}(X,\mathcal R)=H^{0,0}(X)$.
For classes $a,b\in H^{p,q}(X)$, we put
\[
 Q_{\mathcal R}^{p,q}(a,b):=(\sqrt{-1})^{\,p-q}(-1)^{(p+q)(p+q-1)/2}
 \int_X a\cup\bar b\cup c_{n-p-q}(\mathcal R).
\]
The Hodge--Riemann relations require this form to be positive definite
on $\mathcal P^{p,q}(X,\mathcal R)$.

\item \emph{Orthogonal Lefschetz decomposition.}
For $p,q\ge1$, we set $m:=n-p-q+1$.
Assuming Hard Lefschetz in bidegree $(p,q)$, we define
\begin{equation}
 \mathsf L_{\mathcal R}\colon H^{p-1,q-1}(X)\longrightarrow H^{p,q}(X),
 \qquad
 \mathsf L_{\mathcal R}:=\bigl(c_{m-1}(\mathcal R)\cup-\bigr)^{-1}
 \circ\bigl(c_m(\mathcal R)\cup-\bigr).
\end{equation}
We require the decomposition
\[
 H^{p,q}(X)=\bigoplus_{i=0}^{\min\{p,q\}}
 \mathsf L_{\mathcal R}^{\,i}\mathcal P^{p-i,q-i}(X,\mathcal R).
\]
The primitive summand is $Q_{\mathcal R}^{p,q}$-orthogonal to all other summands.
When $p=0$ or $q=0$, the whole space is primitive.
\end{enumerate}
\begin{theorem*}[Theorem~\ref{sp:thm:global}, Corollary~\ref{cor:low-dimensional-package}]
For a cohomological supercritical LYZ pair on a compact K\"ahler $n$-fold,
the three properties above hold whenever $1\le p+q\le n$ and
$\min\{p,q\}\le1$.
As a consequence, the K\"ahler package holds when $n=3$ or $4$.
\end{theorem*}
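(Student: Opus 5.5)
We follow the two-step strategy announced at the start of this section: first a pointwise (linear-algebra) Hodge--Riemann statement for the Chern forms $c_k(\alpha,\omega)$, then a transfer to cohomology by the technique of \cite[Propositions~3.1--3.2]{DN}.

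\textbf{Step 1: pointwise Hodge--Riemann relations.} Fix a point $x\in X$ and the coframe diagonalizing $\alpha$, with eigenangles $\phi_i>0$ and $\sum_i\phi_i=\theta$. By Proposition~\ref{prop:chern} and \eqref{eq:schur-squarefree-expansion}, $c_k(\alpha,\omega)$ is, up to the factor $\binom{n-1}{k}$, the form $P_k(\kappa,\omega)$. Its coefficient on $\mathrm{d}z^I\wedge\mathrm{d}\bar z^I$ is a multiple of
\[
\frac{\sin\bigl(\theta-\sum_{i\in I}\phi_i\bigr)}{\sin\theta\prod_{i\in I}\sin\phi_i}\ge0,
\]
by the computation $p_m(\cot t-s)=\sin(\theta-mt)/(\sin\theta\sin^m t)$ extended multilinearly. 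The coefficient is strictly positive when $|I|\le n-1$.

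Thus $c_k(\alpha,\omega)$ is a diagonal strongly positive form, namely $e_k$ of the weights above. We must show that for $(p,q)$ with $p+q\le n$ and $\min\{p,q\}\le1$:
\begin{itemize}
\item wedging with $c_{n-p-q}(\alpha,\omega)$ is an isomorphism $\Lambda^{p,q}\to\Lambda^{n-q,n-p}$;
\item the form $Q$ is definite on the pointwise primitive space.
\end{itemize}

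For $\min\{p,q\}=0$ the primitive space is everything. The pairing $Q$ on a $(p,0)$-form $a=\sum a_I\,\mathrm{d}z^I$ reduces to $\sum_I|a_I|^2\cdot(\text{coefficient of }c_{n-p}\text{ on the complement of }I)$. This is positive because every complement has size $n-p\le n-1$.

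For $\min\{p,q\}=1$, say $q=1$, a $(p,1)$-form splits into a diagonal part, supported on $\mathrm{d}z^J\wedge\mathrm{d}\bar z^j$ with $j\in J$, and an off-diagonal part. The off-diagonal blocks again give a diagonal positive pairing. The diagonal part is organized by $K=J\setminus\{j\}$, and reduces to a quadratic form on $\R^{n-p+1}$ (indices outside $K$). Its Gram matrix has zero diagonal and off-diagonal entries equal to the positive coefficients of $c_{n-p-1}$.

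Primitivity, i.e.\ being killed by $c_{n-p}$, cuts out a hyperplane orthogonal to a positive vector. Negativity of the resulting quadratic form on that hyperplane is a Lorentzian signature statement: a symmetric matrix with nonnegative entries has exactly one positive eigenvalue. I would prove it using the explicit sine formula: the matrix entries are $e_{n-p-1}$-type expressions in the weights $w_i=\cot\phi_i$ relative to a shifted parameter. Alternatively, I would reduce by restriction to the case $p=0$ (i.e.\ $(1,1)$-forms) and use that $c_{n-2}$ corresponds to a mixed discriminant that is Lorentzian. This is the analog of \cite[Lemma~8.2]{CJY} combined with the equal-angle/averaging reduction of Section~\ref{sec:pliant}.

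\textbf{Step 2: globalization.} Given pointwise Hard Lefschetz and Hodge--Riemann relations for the closed strongly positive forms $c_k(\alpha,\omega)$, the argument of \cite{DN} produces the cohomological statements. Hard Lefschetz follows from Hodge theory with respect to a suitably twisted Laplacian, or from a Hodge-decomposition argument: each class has a harmonic-type representative on which the pointwise definiteness applies. The definiteness on primitive cohomology then follows by continuity and deformation to the Kähler case, along a path $(\alpha_t,\omega_t)$ of supercritical pairs connecting to the example used in Theorem~\ref{thm:analytic-cone-inclusions} (where $\alpha=\omega\cot(\pi/2n)$ and all $c_k$ are multiples of $[\omega]^k$). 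Along this path Hard Lefschetz holds throughout, so the signature is constant.

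\textbf{Step 3: Lefschetz decomposition and low dimensions.} The Lefschetz decomposition follows formally from Hard Lefschetz plus Hodge--Riemann in the two degrees involved, via the usual orthogonal-complement argument, since only $i\le1$ occurs. For $n=3,4$, every $(p,q)$ with $p+q\le n$ has $\min\{p,q\}\le1$, except $(2,2)$ when $n=4$. Hard Lefschetz and Hodge--Riemann in bidegree $(2,2)$ for $n=4$ are trivial statements about $c_0=1$ (the intersection form on middle cohomology), handled by the classical signature argument combined with the lower bidegrees.

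The main obstacle is the pointwise Lorentzian signature for the $(p,1)$ diagonal block, together with making the \cite{DN} globalization work for non-Kähler, merely strongly positive, forms.
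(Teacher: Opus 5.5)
Your overall architecture matches the paper: pointwise relations, then the Dinh--Nguy\^en globalization, then the orthogonal-complement argument, with the classical $(2,2)$ case for $n=4$. Your reduction of the $(p,1)$ case to zero-diagonal blocks indexed by $J$ with $|J|=p-1$ is also correct. However, the two load-bearing steps are not supplied, and the justification you give for the first one is false.

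\textbf{The pointwise Lorentzian lemma.} The principle you invoke, that a symmetric matrix with nonnegative entries has exactly one positive eigenvalue, is false. The direct sum of two copies of $\left(\begin{smallmatrix}0&1\\1&0\end{smallmatrix}\right)$ has eigenvalues $1,1,-1,-1$, and this persists if the zero off-diagonal entries are made slightly positive.

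Even a genuine signature-$(1,N-1)$ form need not be negative on the Euclidean hyperplane orthogonal to a positive vector. Take the $3\times3$ matrix with zero diagonal and all off-diagonal entries $1$ (eigenvalues $2,-1,-1$) and $g=(1,\varepsilon,\varepsilon)$. The vector $v=(2\varepsilon,-1,-1)$ is orthogonal to $g$ but has value $2-8\varepsilon>0$.

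So the statement to prove is specific. The Hessian of $F_{m+1}$ at the sub-tuple $\lambda_{J^c}$ must be negative definite on $\{\sum_i\partial_iF_{m+1}(\lambda_{J^c})z_i=0\}$. Here the angle sum is $\theta-\delta$ with $\delta=\sum_{i\in J}\phi_i$, which is positive once $p\ge2$, so the case $\delta>0$ cannot be avoided. The paper proves this in Lemma~\ref{sp:lem:phase-HR}, following Yuan:
\begin{enumerate}[label=(\alph*)]
\item substitute $w_i=v_i\sin^2\phi_i$ and expand the form explicitly;
\item treat $\delta=0$ and $\delta>0$ separately;
\item control the possibly negative coefficient $\sin(\delta+2\phi_1)/\sin^2\phi_1$ using monotonicity of $g(u)=\sin^2u/\sin(\delta+2u)$ and weighted Cauchy--Schwarz.
\end{enumerate}
The signature then follows from $\operatorname{tr}H=0$.

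Your fallbacks do not replace this. The root forms $D_j$ are not positive, so no mixed-discriminant statement applies to $c_{n-2}(\alpha,\omega)$. The equal-angle and averaging arguments of Section~\ref{sec:pliant} only control signs of coefficients, not signatures.

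\textbf{Cohomological Hard Lefschetz.} Pointwise bijectivity of $c_{m-1}(\alpha,\omega)\wedge-$ does not give bijectivity on cohomology. Your ``harmonic-type representative on which pointwise definiteness applies'' presupposes a pointwise primitive representative, and constructing one is the whole difficulty.

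Proposition~\ref{sp:prop:adjacent-global} produces such representatives only for classes in $\ker([\Psi]\cup-)$. It needs $\Psi\wedge-$ to be pointwise surjective, so that its kernel is a smooth subbundle, and $q_\Omega$ to be positive on that kernel. The argument then combines the estimate $\|\zeta\|^2\le C_1q_\Omega(\zeta,\zeta)+C_2\|\Psi\wedge\zeta\|^2$ with $Q_\Omega(\mathrm{d}\mathrm{d}^c\gamma,\mathrm{d}\mathrm{d}^c\gamma)=0$.

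The missing idea is the auxiliary choice $\Psi=c_{m-1}(\alpha,\omega)\wedge\kappa$. One shows that $\Psi\wedge-$ is pointwise surjective and that $q_{c_{m-1}}$ is positive on its kernel. This is Lemma~\ref{sp:lem:phase-HR}(ii), applied to the positive vector $(\lambda_i-\cot\theta)_i$. Then any class $a$ with $c_{m-1}(\mathcal R)\cup a=0$ lies in $\ker[\Psi]$, so it has a primitive representative. It also satisfies $Q_{\mathcal R}(a,a)=0$, which forces $a=0$.

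Your deformation alternative does not close this gap, for three reasons:
\begin{enumerate}[label=(\alph*)]
\item it assumes Hard Lefschetz along the path, which is the statement at issue;
\item it needs nondegeneracy of $Q$ on the primitive spaces along the path, which Hard Lefschetz alone does not give;
\item it needs a path of cohomological supercritical pairs to the K\"ahler example, which you do not construct.
\end{enumerate}

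Your final step (the decomposition, and the cases $n=3,4$) is fine once you note that $c_1(\mathcal R)=(n-1)[\kappa]$ is K\"ahler, since $\lambda_i-\cot\theta>0$ pointwise.
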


\subsection{Pointwise Hodge--Riemann relations}
In this subsection, we prove the pointwise Hodge--Riemann relations.
Let $(\alpha,\omega)$ be LYZ representatives with phase $\theta\in(0,\pi)$,
and set
$\kappa:=\alpha-\cot\theta\,\omega=c_1(\alpha,\omega)/(n-1)$.
At a fixed point, an $\omega$-orthonormal coframe diagonalizing $\alpha$ gives
\[
 \alpha=\sqrt{-1}\sum_{i=1}^n\lambda_i\,\mathrm{d}z^i\wedge\mathrm{d}\bar z^i,
 \qquad
 \omega=\sqrt{-1}\sum_{i=1}^n\mathrm{d}z^i\wedge\mathrm{d}\bar z^i.
\]
Here $\lambda_i=\cot\phi_i$, with $\phi_i>0$ and
$\sum_i\phi_i=\theta<\pi$.
We use the notation $[n]:=\{1,\ldots,n\}$ and let
$\Lambda^{p,q}$ denote the space of complex $(p,q)$-forms
at the chosen point, with $\Lambda^{p,q}=0$ if either index
lies outside $\{0,\ldots,n\}$.
For $0\le j<n$, evaluating
$P_j$ from \eqref{eq:poly} on $(\kappa,\omega)$ and expanding gives
\begin{equation}\label{sp:eq:P}
 c_j(\alpha,\omega)=(\sqrt{-1})^{\,j^2}\binom{n-1}{j}j!\sum_{|H|=j}
 \frac{\sin(\theta-\sum_{i\in H}\phi_i)}
 {\sin\theta\prod_{i\in H}\sin\phi_i}\,
 \mathrm{d}z^H\wedge\mathrm{d}\bar z^H.
\end{equation}
All these coefficients are positive. For fixed $0<\theta<\pi$ and
$N\ge0$, we write
\[
 F_N(x):=\operatorname{Re}\prod_{i=1}^N(x_i+\sqrt{-1})
 -\cot\theta\operatorname{Im}\prod_{i=1}^N(x_i+\sqrt{-1}).
\]
For $\lambda=(\cot\phi_i)_{i=1}^N$ with $0<\phi_i<\pi$, substituting
$\lambda_i+\sqrt{-1}=e^{\sqrt{-1}\phi_i}/\sin\phi_i$ gives
\begin{equation}\label{sp:eq:phase-polynomial}
 F_N(\lambda)
 =\frac{\cos(\sum_i\phi_i)-\cot\theta\sin(\sum_i\phi_i)}
 {\prod_i\sin\phi_i}
 =\frac{\sin(\theta-\sum_i\phi_i)}
 {\sin\theta\prod_i\sin\phi_i}.
\end{equation}
The first and second derivatives of $F_N$ give the
coefficients of the primitive condition and the pairing below.
For the proof of Lemma~\ref{sp:lem:phase-HR}, we follow Yuan's techniques in the proof of
convexity for supercritical special Lagrangian level sets
\cite[Lemma~2.1]{Yuan}.
This argument also appears in the proof of
\cite[Proposition~4.1]{FYZZ}.

\begin{lemma}\label{sp:lem:phase-HR}
Let $N\ge2$ and $\lambda_i=\cot\phi_i$, where $\phi_i>0$ and
$\sum_i\phi_i\le\theta<\pi$.
The Hessian matrix $H:=(\partial_i\partial_jF_N(\lambda))$,
with $H(z,w):=\sum_{i,j}H_{ij}z_i\bar w_j$, has signature $(1,N-1)$
as a Hermitian form.
The following hold.
\begin{enumerate}[label=(\roman*)]
\item The form $H$ is negative definite on
$\{z\in\C^N:\sum_i\partial_iF_N(\lambda)z_i=0\}$.
\item For every $x\in\R_{>0}^N$, the form $H$ is negative definite on
$\{z\in\C^N:H(z,x)=0\}$.
\end{enumerate}
\end{lemma}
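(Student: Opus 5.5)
The plan is to exploit the multi-affine structure of $F_N$ and reduce everything to one explicit real quadratic-form computation in the spirit of \cite[Lemma~2.1]{Yuan}. Write $s_i:=\sin\phi_i>0$, $\Sigma:=\sum_i\phi_i$ and $\psi:=\theta-\Sigma\in[0,\pi)$.

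\textbf{Step 1: explicit first and second derivatives.} Set $G(x):=\prod_i(x_i+\sqrt{-1})$. Then
\[
 F_N=\operatorname{Im}\bigl(e^{\sqrt{-1}\theta}\overline G\bigr)/\sin\theta .
\]
Since $G$ is multi-affine, $\partial_iG=G/(x_i+\sqrt{-1})$, $\partial_i\partial_jG=G/((x_i+\sqrt{-1})(x_j+\sqrt{-1}))$ for $i\ne j$, and $\partial_i^2G=0$. Substituting $\lambda_i+\sqrt{-1}=e^{\sqrt{-1}\phi_i}/s_i$ as in \eqref{sp:eq:phase-polynomial}, I expect, with the positive constant $c:=1/(\sin\theta\prod_is_i)$,
\[
 \partial_iF_N(\lambda)=c\,s_i\sin(\psi+\phi_i),\qquad
 H_{ij}=c\,s_is_j\sin(\psi+\phi_i+\phi_j)\ (i\ne j),\qquad H_{ii}=0.
\]
All angles $\psi+\phi_i$ and $\psi+\phi_i+\phi_j$ lie in $(0,\pi)$, because they are bounded above by $\theta<\pi$. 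Hence the gradient $g:=\nabla F_N(\lambda)$ has positive entries and $H$ has positive off-diagonal entries. In particular $H(x,x)>0$ for every $x\in\R^N_{>0}$ (using $N\ge2$), so $H$ has at least one positive direction.

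\textbf{Step 2: rank-two decomposition.} Put $v_i:=s_ie^{\sqrt{-1}\phi_i}$ and $A:=\operatorname{Im}(e^{\sqrt{-1}\psi}vv^{T})$, a real symmetric matrix of rank at most two. Then
\[
 H=c\bigl(A-\operatorname{diag}(d_i)\bigr),\qquad d_i:=s_i^2\sin(\psi+2\phi_i).
\]
Since $H$ is real symmetric, its Hermitian signature equals its real signature. Moreover, negativity on a subspace defined by real equations can be checked on real vectors, because the form splits over real and imaginary parts. For real $z$ with $\sum_ig_iz_i=0$, the constraint reads $\operatorname{Im}\bigl(e^{\sqrt{-1}\psi}\sum_iv_iz_i\bigr)=0$, so $e^{\sqrt{-1}\psi}\sum_iv_iz_i=r\in\R$. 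This gives
\[
 z^THz/c=\operatorname{Im}\bigl(e^{-\sqrt{-1}\psi}r^2\bigr)-\sum_id_iz_i^2
 =-r^2\sin\psi-\sum_id_iz_i^2 .
\]

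\textbf{Step 3: the main obstacle, and the conclusions.} If every $\psi+2\phi_i\le\pi$, then every $d_i\ge0$ and the expression in Step 2 is $\le0$. To get strictness one still needs a small argument about when equality holds, using that $d_i>0$ unless $\psi+2\phi_i=\pi$, which can happen for at most one $i$. The real difficulty is that $d_k<0$ for at most one index $k$ with $\phi_k>\pi/2-\psi/2$; at most one such index exists since $\sum_i\phi_i<\pi$.

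For that case I would use the constraint to eliminate $z_k$: from $\operatorname{Im}\bigl(e^{\sqrt{-1}\psi}\sum_iv_iz_i\bigr)=0$ we get
\[
 s_k\sin(\psi+\phi_k)z_k=-\sum_{i\ne k}s_i\sin(\psi+\phi_i)z_i .
\]
Substituting this back, and also expressing $r$ through the remaining $z_i$, should give a quadratic form in $(z_i)_{i\ne k}$. I would then try to prove it is negative definite by Cauchy--Schwarz with weights $d_i>0$ for $i\ne k$, using the trigonometric constraint $\phi_k<\pi-\psi-\sum_{i\ne k}\phi_i$. If this becomes messy, the fallback is Yuan's original route: induct on $N$ via the Schur complement at the large angle, or equivalently show directly that $g^TH^{-1}g>0$.

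Once (i) is established, $H$ is negative definite on the hyperplane $g^\perp$ and positive somewhere by Step 1, so it has signature $(1,N-1)$ and is nondegenerate. Statement (ii) is then the standard consequence: for $x\in\R^N_{>0}$ we have $H(x,x)>0$, so the $H$-orthogonal complement of $x$ is a hyperplane complementary to the positive line $\C x$. By Sylvester's law, $H$ restricted to it has signature $(0,N-1)$, i.e.\ it is negative definite.
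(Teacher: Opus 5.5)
\textbf{There is a genuine gap in the main case of (i).} Your Steps 1 and 2 are correct. The rank-two identity $z^THz/c=-r^2\sin\psi-\sum_id_iz_i^2$ also handles the paper's cases $\delta=0$ and $\delta>0$ at once: with $w_i:=s_i^2z_i$, the constraint gives $\sum_iw_i=-r\sin\psi$, so your $r^2\sin\psi$ is the paper's $(\sum_iw_i)^2/\sin\delta$. The signature count (positive direction from $H(x,x)>0$, where the paper uses $\operatorname{tr}H=0$) and the deduction of (ii) are fine.

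The gap is the only case where the lemma has real content: the unique index $k$ with $\psi+2\phi_k>\pi$, so that $d_k<0$. There you only assert that a reduced form ``should'' be negative definite by some Cauchy--Schwarz argument. The fallback does not fill the gap. The criterion $g^TH^{-1}g>0$ yields (i) only once signature $(1,N-1)$ is known, but you derive the signature from (i), so that route is circular. The Schur-complement induction that would supply the signature independently is not carried out. Without this case, (i), the signature and (ii) all remain unproved.

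\textbf{What is missing is a specific trigonometric estimate.} Set $g(u):=\sin^2u/\sin(\psi+2u)$. Then $d_iz_i^2=w_i^2/g(\phi_i)$ for every $i$, with $g(\phi_i)>0$ for $i\ne k$ and $g(\phi_k)<0$. Since $-w_k=\sum_{i\ne k}w_i+r\sin\psi$, weighted Cauchy--Schwarz gives
\[
 r^2\sin\psi+\sum_{i\ne k}d_iz_i^2\ \ge\ \frac{w_k^2}{\sin\psi+\sum_{i\ne k}g(\phi_i)}.
\]
So you must show $\sin\psi+\sum_{i\ne k}g(\phi_i)<-g(\phi_k)$. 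The paper proves this in three steps.
\begin{itemize}
\item On $(0,(\pi-\psi)/2)$, $\tan u$ is positive and strictly increasing, and $g(u)/\tan u=1/\bigl(2(\cos\psi+\sin\psi\cot 2u)\bigr)$ is positive and nondecreasing. Superadditivity of $\tan$ then gives $\sum_{i\ne k}g(\phi_i)\le g(\sigma)$, where $\sigma:=\sum_{i\ne k}\phi_i$.
\item Since $\sigma<\pi-\psi-\phi_k<(\pi-\psi)/2$ and $g$ is increasing, $g(\sigma)<g(\pi-\psi-\phi_k)$.
\item The identity $\sin\psi+g(\pi-\psi-\phi_k)=-g(\phi_k)$ closes the argument.
\end{itemize}
For $\psi=0$ the $r$-term disappears and $g(u)=\tfrac12\tan u$. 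The estimate becomes $\sum_{i\ne k}\tan\phi_i\le\tan\sigma<-\tan\phi_k$, which is the paper's separate $\delta=0$ argument. Your unified formulation would absorb this case, but only once this estimate is actually proved.
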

\begin{proof}
We first prove (i). Since the coefficients $\partial_iF_N(\lambda)$ are real,
the real and imaginary parts of a vector satisfying the primitive condition
in (i) both satisfy the same condition. Writing $z=\operatorname{Re}z+\sqrt{-1}\operatorname{Im}z$,
we expand the Hermitian form as
\[
\begin{aligned}
 H(z,z)
 &=H(\operatorname{Re}z,\operatorname{Re}z)
   +H(\operatorname{Im}z,\operatorname{Im}z)+\sqrt{-1}\bigl(H(\operatorname{Im}z,\operatorname{Re}z)
   -H(\operatorname{Re}z,\operatorname{Im}z)\bigr)\\
 &=H(\operatorname{Re}z,\operatorname{Re}z)
   +H(\operatorname{Im}z,\operatorname{Im}z),
\end{aligned}
\]
where the cross terms cancel because the matrix $H$ is real symmetric.
For $z\ne0$, at least one of $\operatorname{Re}z$ and $\operatorname{Im}z$ is nonzero. It therefore suffices to prove (i) for real vectors.

Direct differentiation gives
\[
\begin{aligned}
 \partial_iF_N(x)
 &=\operatorname{Re}\prod_{k\ne i}(x_k+\sqrt{-1})
 -\cot\theta\operatorname{Im}\prod_{k\ne i}(x_k+\sqrt{-1}),\\
 \partial_i\partial_jF_N(x)
 &=\operatorname{Re}\prod_{k\ne i,j}(x_k+\sqrt{-1})
 -\cot\theta\operatorname{Im}\prod_{k\ne i,j}(x_k+\sqrt{-1})
 \qquad(i\ne j).
\end{aligned}
\]
Since $\partial_iF_N(x)$ is independent of $x_i$, we have
$H_{ii}=\partial_i^2F_N(\lambda)=0$.
We put $\delta:=\theta-\sum_i\phi_i\ge0$.
Substituting $\lambda_k+\sqrt{-1}=e^{\sqrt{-1}\phi_k}/\sin\phi_k$ gives
\begin{equation}\label{eq:phase-derivatives}
\begin{aligned}
 \partial_iF_N(\lambda)
 &=\frac{\sin(\delta+\phi_i)\sin\phi_i}
 {\sin\theta\prod_k\sin\phi_k}>0,\\
 H_{ij}
 &=\frac{\sin(\delta+\phi_i+\phi_j)\sin\phi_i\sin\phi_j}
 {\sin\theta\prod_k\sin\phi_k}>0
 \qquad(i\ne j),
\end{aligned}
\end{equation}
since $0<\delta+\phi_i\le\delta+\phi_i+\phi_j\le\theta<\pi$.

We take $0\ne v\in\R^N$ satisfying the primitive condition in (i),
that is, $\sum_i\partial_iF_N(\lambda)v_i=0$, and set $w_i:=v_i\sin^2\phi_i$.
By \eqref{eq:phase-derivatives} and the identity
$\sin(\delta+\phi_i)=\sin\phi_i(\cos\delta+\lambda_i\sin\delta)$,
the primitive condition becomes
$\cos\delta\sum_iw_i+\sin\delta\sum_i\lambda_iw_i=0$.
By \eqref{eq:phase-derivatives},
\[
\begin{aligned}
 \sin\theta\left(\prod_i\sin\phi_i\right)H(v,v)
 &=\sum\nolimits_{i\ne j}
 \frac{\sin(\delta+\phi_i+\phi_j)}{\sin\phi_i\sin\phi_j}w_iw_j\\
 &=\sin\delta\Bigl[
 \bigl(\sum\nolimits_i\lambda_iw_i\bigr)^2
 -\bigl(\sum\nolimits_iw_i\bigr)^2\Bigr]\\
 &\quad+2\cos\delta\bigl(\sum\nolimits_i\lambda_iw_i\bigr)
 \bigl(\sum\nolimits_iw_i\bigr)
 -\sum\nolimits_i\frac{\sin(\delta+2\phi_i)}{\sin^2\phi_i}w_i^2.
\end{aligned}
\]

\noindent We relabel the indices so that $\lambda_1\le\cdots\le\lambda_N$,
or equivalently $\phi_1\ge\cdots\ge\phi_N>0$.

For $\delta=0$, the primitive condition gives $\sum_iw_i=0$.
Substituting $\delta=0$ into the preceding expansion gives
\[
\begin{aligned}
 -\sin\theta\left(\prod_i\sin\phi_i\right)H(v,v)
 &=-2\left(\sum_i\lambda_iw_i\right)
       \underbrace{\left(\sum_iw_i\right)}_{=0}
   +\sum_i\frac{\sin(2\phi_i)}{\sin^2\phi_i}w_i^2\\
 &=2\sum_i\cot\phi_i\,w_i^2\\ 
  &=2\sum_i\lambda_iw_i^2.
\end{aligned}
\]
Since $2\phi_2\le\phi_1+\phi_2\le\sum_i\phi_i<\pi$,
we have $\phi_i<\pi/2$ for $i\ge2$, hence
$0<\lambda_2\le\cdots\le\lambda_N$.
If $\lambda_1\ge0$, the conditions $w\ne0$ and $\sum_iw_i=0$
imply $(w_2,\ldots,w_N)\ne0$, so $\sum_i\lambda_iw_i^2>0$.
Otherwise, $\lambda_1<0$.
The bound $\sum_{i>1}\phi_i<\pi-\phi_1<\pi/2$ gives
$\sum_{i>1}\lambda_i^{-1}\le\tan(\sum_{i>1}\phi_i)<-\lambda_1^{-1}$.
Weighted Cauchy--Schwarz now yields
\[
 \sum_i\lambda_iw_i^2
 \ge\left(1+\lambda_1\sum_{i>1}\frac1{\lambda_i}\right)
       \sum_{i>1}\lambda_iw_i^2>0.
\]

For $\delta>0$, the primitive condition gives
$\sum_i\lambda_iw_i=-\cot\delta\sum_iw_i$.
Substituting this equality into the quadratic expansion gives
\[
 -\sin\theta\left(\prod_i\sin\phi_i\right)H(v,v)
 =\sum_i\frac{\sin(\delta+2\phi_i)}
                  {\sin^2\phi_i}w_i^2
   +\frac{(\sum_iw_i)^2}{\sin\delta}.
\]
For $i\ge2$, the ordering gives
\[
 0<\delta+2\phi_i\le\delta+\phi_1+\phi_i\le\theta<\pi,
\]
so the coefficients of $w_i^2$ are positive.
We now consider the coefficient of $w_1^2$.
Suppose first that $\delta+2\phi_1\le\pi$.
Then all diagonal coefficients are nonnegative.
If the right-hand side were zero, we would have $w_i=0$ for $i\ge2$
and $\sum_iw_i=0$, hence $w=0$.
Thus the right-hand side is positive for $w\ne0$ in this case.

Suppose now that $\delta+2\phi_1>\pi$.
Since $\delta+\phi_1<\pi$, we have $\delta+2\phi_1<2\pi$,
so the coefficient of $w_1^2$ is negative.
We must show that the sum of the remaining positive terms is greater than
the absolute value of this negative term.
We put
$\sigma:=\sum_{i>1}\phi_i<\pi-\delta-\phi_1<(\pi-\delta)/2$.
On $0<u<(\pi-\delta)/2$, we set
\[
 g(u):=\frac{\sin^2u}{\sin(\delta+2u)},\qquad
 \frac{g(u)}{\tan u}
 =\frac1{2\bigl(\cos\delta+\sin\delta\cot(2u)\bigr)}.
\]
Both $\tan u$ and $g(u)/\tan u$ are positive and strictly increasing.
In particular, $g$ is strictly increasing.
Since $\phi_i\le\sigma$ for $i>1$ and
$\sum_{i>1}\tan\phi_i\le\tan\sigma$, we have
\[
 \sum_{i>1}g(\phi_i)
 =\sum_{i>1}\frac{g(\phi_i)}{\tan\phi_i}\tan\phi_i
 \le\frac{g(\sigma)}{\tan\sigma}\sum_{i>1}\tan\phi_i
 \le g(\sigma).
\]
It follows that
\[
\begin{aligned}
 0<\sin\delta+\sum\nolimits_{i>1}g(\phi_i)
 &\le\sin\delta+g(\sigma)\\[-3pt]
 &<\sin\delta+g(\pi-\delta-\phi_1)\\
 &=-\frac{\sin^2\phi_1}{\sin(\delta+2\phi_1)}.
\end{aligned}
\]
If $w_1\ne0$, weighted Cauchy--Schwarz applied to $w_i$ for $i>1$ and
$-\sum_iw_i$, whose sum is $-w_1$, gives
\[
\begin{aligned}
 \sum_{i>1}\frac{w_i^2}{g(\phi_i)}
 +\frac{(\sum_iw_i)^2}{\sin\delta}
 &\ge\frac{w_1^2}{\sin\delta+\sum_{i>1}g(\phi_i)}\\
 &>-\frac{\sin(\delta+2\phi_1)}
 {\sin^2\phi_1}w_1^2.
\end{aligned}
\]
When $w_1=0$, we have $(w_2,\ldots,w_N)\ne0$, so
$\sum_{i>1}w_i^2/g(\phi_i)>0$.
Thus the right-hand side is positive for $w\ne0$ in this case as well,
and hence $H(v,v)<0$. This proves (i).

By (i), the matrix $H$ has at least $N-1$ negative eigenvalues.
Since $H_{ii}=0$, we have $\operatorname{tr}H=0$, so the remaining
eigenvalue is positive. Thus the matrix $H$ has signature $(1,N-1)$.
We now prove (ii). We fix $x\in\R_{>0}^N$.
By \eqref{eq:phase-derivatives}, $H_{ij}>0$ for $i\ne j$.
Since every $x_i>0$ and $N\ge2$,
$H(x,x)=2\sum_{i<j}H_{ij}x_ix_j>0$.
Thus the form $H$ is positive definite on $\C x$.
In the $H$-orthogonal decomposition
\[
 \C^N=\C x\oplus\{z\in\C^N:H(z,x)=0\},
\]
the restriction of $H$ to the second summand therefore has
signature $(0,N-1)$ and is negative definite.
\end{proof}

We next apply this Hessian inequality to the pairing on differential forms.
For $u,v\in\Lambda^{p,q}$, we define
\[
 q_{\mathcal R}^{p,q}(u,v)(\sqrt{-1})^{\,n^2}
 \mathrm{d}z^{[n]}\wedge\mathrm{d}\bar z^{[n]}
 :=(\sqrt{-1})^{\,p-q}(-1)^{(p+q)(p+q-1)/2}
 u\wedge\bar v\wedge c_{n-p-q}(\alpha,\omega).
\]
We omit the superscript when the bidegree is fixed.

\begin{proposition}\label{sp:prop:equality}
Assume $1\le p+q\le n$ and $\min\{p,q\}\le1$. We set
$m:=n-p-q+1$.
The following hold at every point.
\begin{enumerate}[label=(\roman*)]
\item The map $c_{m-1}(\alpha,\omega)\wedge-\colon\Lambda^{p,q}\to\Lambda^{n-q,n-p}$
is an isomorphism.
\item Both $c_m(\alpha,\omega)\wedge-$ and $c_{m-1}(\alpha,\omega)\wedge\kappa\wedge-$
from $\Lambda^{p,q}$ to $\Lambda^{n-q+1,n-p+1}$ are surjective.
\item The form $q_{\mathcal R}$ is positive definite on each of
$\ker(c_m(\alpha,\omega)\wedge-)$ and $\ker(c_{m-1}(\alpha,\omega)\wedge\kappa\wedge-)$.
\end{enumerate}
\end{proposition}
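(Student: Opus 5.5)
We work at a fixed point in the $\omega$-orthonormal coframe diagonalizing $\alpha$. Each of the forms $c_j(\alpha,\omega)$ and $\kappa$ is a diagonal combination of the $\mathrm{d}z^H\wedge\mathrm{d}\bar z^H$, by \eqref{sp:eq:P}. So wedging preserves the splitting of $\Lambda^{p,q}$ into blocks indexed by the ``support data'' of a monomial $\mathrm{d}z^I\wedge\mathrm{d}\bar z^J$.

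By symmetry (complex conjugation exchanges $p,q$ and preserves all three statements), we may assume $q\le p$, so $q\in\{0,1\}$. Write a monomial as $\mathrm{d}z^I\wedge\mathrm{d}\bar z^J$ with $|I|=p$, $|J|=q$, and let $C:=I\cap J$ and $S:=I\triangle J$. Wedging with any diagonal form $\mathrm{d}z^H\wedge\mathrm{d}\bar z^H$ with $H$ disjoint from $I\cup J$ preserves $S$ together with the way $S$ is split into $I\setminus J$ and $J\setminus I$. Hence all maps decompose into finite blocks indexed by this splitting.

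\textbf{Case $q=0$.} Here $C=\varnothing$ and the blocks are one-dimensional. The coefficient of $c_{m-1}\wedge\mathrm{d}z^I$ is $\binom{n-1}{m-1}(m-1)!F_{m-1}$ evaluated on the complementary angles. Since $|[n]\setminus I|=m-1$ and the sum of those angles is at most $\theta$, this coefficient is positive by \eqref{sp:eq:phase-polynomial}. This gives (i). For (ii), the target $\Lambda^{n+1,n-p+1}$ is zero, so both maps are trivially surjective. For (iii), the kernels are the whole space, and $q_{\mathcal R}$ is diagonal with positive entries after checking the sign convention $(\sqrt{-1})^{p}(-1)^{p(p-1)/2}$ against $(\sqrt{-1})^{n^2}$, which is the standard computation.

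\textbf{Case $q=1$.} Fix the disjoint data $I_0=I\setminus J$ with $|I_0|=p-1$, and let the remaining free index $k$ range over $T:=[n]\setminus I_0$, where $|T|=n-p+1=:N$. The relevant block is then spanned by the diagonal-type forms $\mathrm{d}z^{I_0}\wedge\mathrm{d}z^k\wedge\mathrm{d}\bar z^k$ for $k\in T$. The off-diagonal monomials ($J\not\subset I$) again form one-dimensional blocks and are handled exactly as in the case $q=0$. On the $T$-block, wedging with $c_{m-1}$ (with $m-1=N-2$) maps $\C^T$ to the top-degree block $\C^T$ of $\Lambda^{n-1,n-p}$; this target block is indexed by the omitted index in $T$. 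Up to positive constants and a factor $\prod_{i\notin T}$, the matrix of this map is $(\partial_i\partial_jF_N(\lambda_T))$ for $i\ne j$, with zero diagonal, i.e.\ the Hessian $H$. By Lemma~\ref{sp:lem:phase-HR} this matrix is nondegenerate of signature $(1,N-1)$, which gives (i).

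Similarly, $c_m\wedge-$ maps $\C^T$ to the one-dimensional top block, with coefficients $\partial_kF_N(\lambda_T)>0$. This map is surjective, and its kernel is the hyperplane of Lemma~\ref{sp:lem:phase-HR}(i). For $c_{m-1}\wedge\kappa$, the functional is $z\mapsto H(z,x)$, where $x$ represents $\kappa$ restricted to $T$. The main obstacle is that $x$ need not have positive entries, because $\kappa=\alpha-\cot\theta\,\omega$ has eigenvalues $\lambda_i-\cot\theta$. I would try to rewrite $c_{m-1}\wedge\kappa=\tfrac{1}{n-1}c_1c_{m-1}$ and use Newton-type identities so that the functional becomes $H(z,x')$ with $x'_k=\partial_kF$-like positive weights. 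Alternatively, I would show directly that $\sum_j H_{kj}(\lambda_j-\cot\theta)$ equals a positive multiple of $\partial_kF_N$ plus terms from indices outside $T$ that are positive by \eqref{sp:eq:phase-polynomial}. Once $x'\in\R^N_{>0}$ is established, Lemma~\ref{sp:lem:phase-HR}(ii) gives both surjectivity (since $H(x',x')>0$) and negative definiteness of $H$ on the kernel.

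Finally, $q_{\mathcal R}$ on the $T$-block equals a sign times $H$, and the sign $(\sqrt{-1})^{p-1}(-1)^{p(p+1)/2}$, compared against $(\sqrt{-1})^{n^2}$ and the reordering of $\mathrm{d}z^k\wedge\mathrm{d}\bar z^k$, should be $-1$. So the negative definiteness of $H$ on each kernel becomes positivity of $q_{\mathcal R}$, which proves (iii). The off-diagonal one-dimensional blocks contribute positive diagonal terms and lie in the kernels automatically, since their images vanish.
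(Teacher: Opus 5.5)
Most of your proposal is the paper's argument. This includes the block decomposition, the identification of $c_{m-1}\wedge-$ and of $q_{\mathcal R}$ on each block spanned by $\mathrm{d}z^{I_0}\wedge\mathrm{d}z^k\wedge\mathrm{d}\bar z^k$, $k\in T$, with (minus) the Hessian of $F_N$ at $\lambda_T$, and the treatment of $\ker(c_m\wedge-)$ via Lemma~\ref{sp:lem:phase-HR}(i). The sign you leave as ``should be $-1$'' is confirmed by \eqref{sp:eq:monomial-pairings}. You read off (i) directly from nondegeneracy of the block matrix, whereas the paper deduces it from nondegeneracy of $q_{\mathcal R}$; the two routes are equivalent. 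The gap is in the $\kappa$-parts of (ii) and (iii).

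\textbf{The gap.} You assert that $x=(\lambda_k-\cot\theta)_{k\in T}$ ``need not have positive entries'' and then defer to two workarounds that you do not carry out. The obstacle is illusory, and seeing this is the missing step.
\begin{itemize}
\item Since $n\ge2$, every $\phi_i>0$ and $\sum_i\phi_i=\theta$, each single angle satisfies $0<\phi_k<\theta<\pi$.
\item Because $\cot$ is strictly decreasing on $(0,\pi)$, this gives $\lambda_k-\cot\theta=\cot\phi_k-\cot\theta>0$.
\item So $x\in\R^N_{>0}$ already, and Lemma~\ref{sp:lem:phase-HR}(ii) applies verbatim. The inequality $H(x,x)>0$ (equivalently $\sum_{j\ne k}H_{kj}x_j>0$ for each $k$) gives surjectivity of $c_{m-1}\wedge\kappa\wedge-$ onto each one-dimensional target block. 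Negative definiteness of $H$ on $\{H(z,x)=0\}$ gives positivity of $q_{\mathcal R}$ on that kernel. This is exactly the paper's argument.
\end{itemize}

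Your workarounds would not have helped as stated. The form $\tfrac{1}{n-1}c_1\wedge c_{m-1}$ is literally the same as $c_{m-1}\wedge\kappa$, so rewriting changes nothing. The functional on a $T$-block involves only indices in $T$, so ``terms from indices outside $T$'' do not arise. Finally, writing the functional as a positive multiple of $\partial_kF_N$ plus positive terms would not put it in the form $H(\cdot,x')$ with $x'>0$ that the lemma requires.

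\textbf{A smaller imprecision.} For the one-dimensional blocks you justify $F_{m-1}>0$ by saying the complementary angles have sum ``at most $\theta$''. But $F_{m-1}$ vanishes when that sum equals $\theta$. You need the strict inequality, which holds because the complementary index set misses the nonempty set $I$. Also, there is no extra factor $\prod_{i\notin T}$: one has exactly $H_{kj}=F_{m-1}(\lambda_{T\setminus\{k,j\}})$.
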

\begin{proof}
If $q=0$, \eqref{sp:eq:P} gives
\[
 c_{n-p}(\alpha,\omega)=(\sqrt{-1})^{\,(n-p)^2}
 \sum_{|L|=n-p}\gamma_L\,\mathrm{d}z^L\wedge\mathrm{d}\bar z^L,
\]
with $\gamma_L>0$. For $0\ne u=\sum_{|I|=p}u_I\,\mathrm{d}z^I$,
we obtain
\[
 q_{\mathcal R}(u,u)=\sum_{\substack{I\sqcup L=[n]\\|I|=p}}
 \gamma_L|u_I|^2>0.
\]
The case $p=0$ follows by conjugation. In these cases, both maps
in (ii) have zero target, so (ii) and (iii) follow. The perfect
exterior-product pairing gives (i).
By conjugation, it remains to consider $q=1$ and $1\le p\le n-1$.
Then $m=n-p$.

For an index set $L\subset[n]$, write $L^c:=[n]\setminus L$ and let
$\lambda_L:=(\lambda_i)_{i\in L}$ denote the tuple of eigenvalues of
$\alpha$ with respect to $\omega$ indexed by $L$, in increasing index order.
For $u\in\Lambda^{p,1}$, we may decompose it as
\[
 u=\sum_{\substack{|I|=p\\i\in I^c}}
 a_{I,i}\,\mathrm{d}z^I\wedge\mathrm{d}\bar z^i
 +\sum_{\substack{|J|=p-1\\i\in J^c}}
 b_{J,i}\,\mathrm{d}z^J\wedge
 \mathrm{d}z^i\wedge\mathrm{d}\bar z^i.
\]
The forms $\eta_{I,i}:=\mathrm{d}z^I\wedge\mathrm{d}\bar z^i$ and
$\zeta_{J,i}:=\mathrm{d}z^J\wedge\mathrm{d}z^i\wedge\mathrm{d}\bar z^i$,
with the indices as above, form a basis of $\Lambda^{p,1}$.
We compute $q_{\mathcal R}$ with respect to this basis.
Using \eqref{sp:eq:phase-polynomial} in \eqref{sp:eq:P} gives
\begin{equation}\label{sp:eq:chern-coefficients}
 \frac{c_{m-1}(\alpha,\omega)}{\binom{n-1}{m-1}(m-1)!}
 =(\sqrt{-1})^{\,(m-1)^2}\sum_{|L|=m-1}F_{m-1}(\lambda_L)
 \,\mathrm{d}z^L\wedge\mathrm{d}\bar z^L.
\end{equation}
Using \eqref{sp:eq:chern-coefficients}, we obtain
\[
\begin{aligned}
 \eta_{I,i}\wedge\overline{\eta_{I,j}}\wedge c_{m-1}(\alpha,\omega)
 &=\begin{cases}
 -\mathrm{d}z^{I\cup\{i\}}\wedge
 \mathrm{d}\bar z^{I\cup\{i\}}\wedge c_{m-1}(\alpha,\omega),&i=j,\\
 0,&i\ne j,
 \end{cases}\\
 \zeta_{J,i}\wedge\overline{\zeta_{J,j}}\wedge c_{m-1}(\alpha,\omega)
 &=\begin{cases}
 \mathrm{d}z^{J\cup\{i,j\}}\wedge
 \mathrm{d}\bar z^{J\cup\{i,j\}}\wedge c_{m-1}(\alpha,\omega),&i\ne j,\\
 0,&i=j.
 \end{cases}
\end{aligned}
\]
The definition of $q_{\mathcal R}$ then gives
\begin{equation}\label{sp:eq:monomial-pairings}
\begin{aligned}
 \frac{q_{\mathcal R}(\eta_{I,i},\eta_{I',j})}{\binom{n-1}{m-1}(m-1)!}
 &=\begin{cases}
 F_{m-1}(\lambda_{(I\cup\{i\})^c}),&I=I',\ i=j,\\
 0,&\text{otherwise},
 \end{cases}\\
 q_{\mathcal R}(\eta_{I,i},\zeta_{J,j})&=0,\\
 \frac{q_{\mathcal R}(\zeta_{J,i},\zeta_{J',j})}{\binom{n-1}{m-1}(m-1)!}
 &=\begin{cases}
 -F_{m-1}(\lambda_{(J\cup\{i,j\})^c}),&J=J',\ i\ne j,\\
 0,&\text{otherwise}.
 \end{cases}
\end{aligned}
\end{equation}
For $|J|=p-1$, we define
$F_J(\lambda):=F_{m+1}(\lambda_{J^c})$ and
$H_J:=\bigl(\partial_i\partial_jF_J(\lambda)\bigr)_{i,j\in J^c}$.
Then
\begin{equation}\label{sp:eq:block-derivatives}
\begin{aligned}
 \partial_iF_J(\lambda)
 &=F_m(\lambda_L),
 &&J\sqcup\{i\}\sqcup L=[n],\\
 (H_J)_{ij}
 &=F_{m-1}(\lambda_L),
 &&J\sqcup\{i,j\}\sqcup L=[n],\quad i\ne j,\\
 (H_J)_{ii}&=0,
 &&i\in J^c.
\end{aligned}
\end{equation}
With $b_J:=(b_{J,i})_{i\in J^c}$, combining
\eqref{sp:eq:monomial-pairings} and
\eqref{sp:eq:block-derivatives} gives
\begin{equation}\label{sp:eq:direct-HR}
 \frac{q_{\mathcal R}(u,u)}{\binom{n-1}{m-1}(m-1)!}
 =\sum_{\substack{I\sqcup\{i\}\sqcup L=[n]\\|I|=p}}
 F_{m-1}(\lambda_L)|a_{I,i}|^2
 -\sum_{|J|=p-1}H_J(b_J,b_J).
\end{equation}
By \eqref{sp:eq:P}, $F_{m-1}(\lambda_L)>0$.
Since $|J^c|=m+1\ge2$ and $\sum_{i\in J^c}\phi_i\le\theta<\pi$,
Lemma~\ref{sp:lem:phase-HR} applies to $F_{m+1}$ at $\lambda_{J^c}$
and shows that $H_J$ has signature $(1,m)$.
Hence each $H_J$ is nondegenerate, and so is the pairing $q_{\mathcal R}$.
By the definition of $q_{\mathcal R}$,
\[
 q_{\mathcal R}(u,v)(\sqrt{-1})^{\,n^2}
 \mathrm{d}z^{[n]}\wedge\mathrm{d}\bar z^{[n]}
 =(\sqrt{-1})^{\,p-1}(-1)^{p(p+1)/2}
 \bigl(c_{m-1}(\alpha,\omega)\wedge u\bigr)\wedge\bar v.
\]
Consequently, if $c_{m-1}(\alpha,\omega)\wedge u=0$, then $q_{\mathcal R}(u,v)=0$ for
every $v\in\Lambda^{p,1}$, and nondegeneracy forces $u=0$.
The source and target have equal dimension, so this proves (i).

We now prove (ii) and (iii).
Since $\kappa=\sqrt{-1}\sum_i(\lambda_i-\cot\theta)
\,\mathrm{d}z^i\wedge\mathrm{d}\bar z^i$,
\eqref{sp:eq:chern-coefficients} gives
\[
 \frac{c_{m-1}(\alpha,\omega)\wedge\kappa}{\binom{n-1}{m-1}(m-1)!}
 =(\sqrt{-1})^{\,m^2}\sum_{|L|=m}
 \left(\sum_{j\in L}(\lambda_j-\cot\theta)
 F_{m-1}(\lambda_{L\setminus\{j\}})\right)
 \mathrm{d}z^L\wedge\mathrm{d}\bar z^L.
\]
We compute the wedge-product maps in (ii) with respect to the basis
$\eta_{I,i},\zeta_{J,i}$ of $\Lambda^{p,1}$ and the basis
\[
 \tau_J:=(\sqrt{-1})^{\,m+1}\mathrm{d}z^J\wedge
 \bigwedge_{k\in J^c}(\mathrm{d}z^k\wedge\mathrm{d}\bar z^k),
 \qquad |J|=p-1,
\]
of $\Lambda^{n,n-p+1}$.
Using the expansion above, \eqref{sp:eq:P}, and
\eqref{sp:eq:block-derivatives}, a direct computation on these basis
forms gives
\begin{equation}\label{sp:eq:diagonal-wedges}
\begin{aligned}
 c_m(\alpha,\omega)\wedge\eta_{I,i}
 &=0,\\
 c_{m-1}(\alpha,\omega)\wedge\kappa\wedge\eta_{I,i}
 &=0,\\
 \frac{c_m(\alpha,\omega)\wedge\zeta_{J,i}}{\binom{n-1}{m}m!}
 &=-\sqrt{-1}\,\partial_iF_J(\lambda)\tau_J,\\
 \frac{c_{m-1}(\alpha,\omega)\wedge\kappa\wedge\zeta_{J,i}}
 {\binom{n-1}{m-1}(m-1)!}
 &=-\sqrt{-1}\left(\sum_{j\in J^c}(H_J)_{ij}
 (\lambda_j-\cot\theta)\right)\tau_J.
\end{aligned}
\end{equation}
To prove (ii), fix $J$ with $|J|=p-1$ and $i\in J^c$.
By \eqref{eq:phase-derivatives}, we have $\partial_iF_J(\lambda)>0$,
$(H_J)_{ij}>0$ for $j\ne i$, and $(H_J)_{ii}=0$.
Moreover, $\phi_j<\theta$ gives $\lambda_j-\cot\theta>0$.
Since $|J^c|=m+1\ge2$, we obtain
\[
 \sum_{j\in J^c}(H_J)_{ij}(\lambda_j-\cot\theta)
 =\sum_{\substack{j\in J^c\\j\ne i}}
 (H_J)_{ij}(\lambda_j-\cot\theta)>0.
\]
The last equality in \eqref{sp:eq:diagonal-wedges} therefore shows that
the map $c_{m-1}(\alpha,\omega)\wedge\kappa\wedge-$ sends
$\zeta_{J,i}$ to a nonzero multiple of $\tau_J$.
The third equality in \eqref{sp:eq:diagonal-wedges} and
$\partial_iF_J(\lambda)>0$ give the same
conclusion for $c_m(\alpha,\omega)\wedge-$.
Every target basis vector $\tau_J$ therefore lies in the image of
each map. This proves (ii).

For (iii), we first express the two primitive conditions in terms of
the vectors $b_J$. By linearity, \eqref{sp:eq:diagonal-wedges} gives
\begin{equation}\label{sp:eq:direct-primitive}
\begin{aligned}
 \frac{c_m(\alpha,\omega)\wedge u}{\binom{n-1}{m}m!}
 &=-\sqrt{-1}\sum_{|J|=p-1}
 \left(\sum_{i\in J^c}\partial_iF_J(\lambda)b_{J,i}\right)\tau_J,\\
 \frac{c_{m-1}(\alpha,\omega)\wedge\kappa\wedge u}
 {\binom{n-1}{m-1}(m-1)!}
 &=-\sqrt{-1}\sum_{|J|=p-1}
 H_J\bigl(b_J,(\lambda_i-\cot\theta)_{i\in J^c}\bigr)\tau_J.
\end{aligned}
\end{equation}
Suppose first that $c_m(\alpha,\omega)\wedge u=0$.
The forms $\tau_J$ are linearly independent, so the first identity gives
\[
 \sum_{i\in J^c}\partial_iF_J(\lambda)b_{J,i}=0
 \qquad\text{for every }J.
\]
Lemma~\ref{sp:lem:phase-HR}(i) therefore gives
$H_J(b_J,b_J)\le0$, with equality only when $b_J=0$.

Now suppose that $c_{m-1}(\alpha,\omega)\wedge\kappa\wedge u=0$.
The second identity in \eqref{sp:eq:direct-primitive} and the same
linear independence give
\[
 H_J\bigl(b_J,(\lambda_i-\cot\theta)_{i\in J^c}\bigr)=0
 \qquad\text{for every }J.
\]
All entries of $(\lambda_i-\cot\theta)_{i\in J^c}$ are positive,
so Lemma~\ref{sp:lem:phase-HR}(ii) again gives
$H_J(b_J,b_J)\le0$, with equality only when $b_J=0$.

In either kernel, every term on the right-hand side of
\eqref{sp:eq:direct-HR} is nonnegative.
For $u\ne0$, its basis expansion has either a nonzero coefficient
$a_{I,i}$ or a nonzero vector $b_J$.
In the first case, the corresponding term
$F_{m-1}(\lambda_L)|a_{I,i}|^2$ is positive;
in the second, $-H_J(b_J,b_J)>0$.
Hence $q_{\mathcal R}(u,u)>0$ for every nonzero $u$ in either kernel,
proving (iii).
\end{proof}

\subsection{K\"ahler package on cohomology}
To obtain the cohomological relations, we use the construction of
primitive representatives by Dinh--Nguy\^en
\cite[Propositions~3.1--3.2]{DN} for a pair of closed forms.
We write $\Lambda^{p,q}T^*X$ for the smooth bundle of complex
$(p,q)$-forms, whose fibre at $x$ is $\Lambda^{p,q}T_x^*X$.
We fix a background K\"ahler form and denote its volume form by
$\mathrm{vol}$.
For every bidegree $(p,q)$, we equip $\Lambda^{p,q}T^*X$ with the
Hermitian metric $h_0$ induced by the background K\"ahler form.
We denote the associated pointwise and $L^2$ norms by $\|\cdot\|$
and $\|\cdot\|_{L^2}$, respectively.
For $(p,q)$-forms $u,v$ and a real $(n-p-q,n-p-q)$-form $\Omega$, we set
\[
\begin{aligned}
 q_\Omega^{p,q}(u,v)\,\mathrm{vol}
 &:=(\sqrt{-1})^{\,p-q}(-1)^{(p+q)(p+q-1)/2}
 u\wedge\bar v\wedge\Omega,\\
 Q_\Omega^{p,q}(u,v)
 &:=\int_Xq_\Omega^{p,q}(u,v)\,\mathrm{vol}.
\end{aligned}
\]
When $\Omega$ is closed, Stokes' theorem shows that the restriction
of $Q_\Omega^{p,q}$ to closed forms descends to the cohomological
pairing
\[
 Q_{[\Omega]}^{p,q}([u],[v]):=Q_\Omega^{p,q}(u,v).
\]
We use the convention
$\mathrm{d}\mathrm{d}^c:=\sqrt{-1}\partial\bar\partial$.

\begin{proposition}\label{sp:prop:adjacent-global}
Let $X$ be a compact K\"ahler $n$-fold, and let $\Psi$, $\Omega$ be
smooth real closed forms of bidegrees $(m,m)$, $(m-1,m-1)$,
where $1\le m\le n$.
We fix $p,q\ge0$ with $p+q=n-m+1$.
Assume that the following hold at every point $x\in X$.
\begin{enumerate}[label=(\roman*)]
\item The map $\Psi_x\wedge-\colon \Lambda^{p,q}T_x^*X\to
\Lambda^{p+m,q+m}T_x^*X$ is surjective.
\item The form $q_\Omega^{p,q}$ is positive definite on $\ker(\Psi_x\wedge-)$.
\end{enumerate}
Then every class in $\ker([\Psi]\cup-)$ has a smooth closed
representative $u$ with $\Psi\wedge u=0$.
In particular, the form $Q_{[\Omega]}^{p,q}$ is positive definite on this cohomological kernel.
\end{proposition}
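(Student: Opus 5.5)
Following Dinh--Nguy\^en, the plan is to start from an arbitrary closed representative of the class and correct it by a $\mathrm{dd}^c$-exact term, chosen by solving an elliptic equation whose ellipticity comes from the pointwise hypotheses (i) and (ii).

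\emph{Setting up the correction.} Let $a\in\ker([\Psi]\cup-)\subset H^{p,q}(X)$ and pick a smooth closed $(p,q)$-form $u_0$ representing $a$. Then $\Psi\wedge u_0$ is a closed $(n-q+1,n-p+1)$-form that is exact. By the $\partial\bar\partial$-lemma on the compact K\"ahler manifold $X$ it is $\mathrm{dd}^c$-exact: $\Psi\wedge u_0=\mathrm{dd}^c\beta$ for some $(n-q,n-p)$-form $\beta$. We look for $u=u_0+\mathrm{dd}^c v$ with $v$ a $(p-1,q-1)$-form (a $\partial$- or $\bar\partial$-exact correction when $p$ or $q$ is $0$). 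Since $\Psi$ is closed, $\Psi\wedge\mathrm{dd}^c v=\mathrm{dd}^c(\Psi\wedge v)$. It therefore suffices to find $v$ with
\[
 \Psi\wedge v=-\beta+\gamma,\qquad \mathrm{dd}^c\gamma=0 .
\]
By (i) the bundle map $\Psi\wedge-$ is surjective at each point, and its kernel has constant rank, so we obtain a smooth splitting and can solve $\Psi\wedge v=-\beta$ pointwise. The difficulty is that the pointwise solution is not of the required form: we need the correction to be $\mathrm{dd}^c$-exact \emph{after} wedging.

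\emph{The elliptic argument.} Following \cite[Proposition~3.1]{DN}, we instead minimize, or solve the Euler--Lagrange equation for, the projection onto $\ker(\Psi\wedge-)$. Concretely, we use the $q_\Omega$-orthogonal complement of the kernel. Hypothesis (ii) says $q_\Omega$ is positive definite on $\ker(\Psi_x\wedge-)$. Together with (i), the operator
\[
 v\longmapsto \pi\bigl(\mathrm{dd}^c v\bigr),
\]
where $\pi$ is the projection to the complement of the kernel, composed with its adjoint, is a second-order operator with injective symbol on the relevant subbundle, so Fredholm theory applies. We solve $\Psi\wedge(u_0+\mathrm{dd}^c v)=0$ by Hodge-theoretic duality. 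The equation has a solution iff the right-hand side is orthogonal to the cokernel. Elements of the cokernel are forms $w$ with $\mathrm{dd}^c(\Psi\wedge w)$-type adjoint vanishing; pairing with $u_0$ and using $[\Psi\wedge u_0]=0$ kills the obstruction, by Stokes. Smoothness follows from elliptic regularity.

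\emph{Conclusion and main obstacle.} Once $u$ is closed, cohomologous to $u_0$, and satisfies $\Psi\wedge u=0$ pointwise, hypothesis (ii) gives $q_\Omega(u,u)>0$ wherever $u\ne0$. Hence $Q_{[\Omega]}(a,a)=\int_X q_\Omega(u,u)\,\mathrm{vol}\ge0$, with equality only if $u\equiv0$, that is, $a=0$. Positive definiteness on the cohomological kernel follows, since the pairing is defined via representatives by Stokes. The main obstacle is the ellipticity and solvability step: one must show that the operator built from $\mathrm{dd}^c$ and the pointwise projection is elliptic and has no obstructing cokernel. I would mimic \cite[Proposition~3.2]{DN}. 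The positivity (ii) supplies a coercive energy $\int q_\Omega$ on the kernel bundle, and surjectivity (i) makes the kernel a smooth subbundle of constant rank. Together these give the a priori estimate needed for a variational solution.
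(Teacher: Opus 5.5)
Your setup (correct a closed representative $u_0$ by a $\mathrm{d}\mathrm{d}^c$-exact term, and use the $\mathrm{d}\mathrm{d}^c$-lemma to write $\Psi\wedge u_0=\mathrm{d}\mathrm{d}^c\beta$) agrees with the paper, and so does your final positivity step. There is a genuine gap, however. The existence and regularity of the correction is the whole content of the proposition, and you only assert it; the mechanism you describe for it does not work.

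The operator $v\mapsto\Psi\wedge\mathrm{d}\mathrm{d}^c v$ is not elliptic in general. The same holds for its projected version, and for its composition with its adjoint, which would be fourth order anyway. For $p\ge2$ its principal symbol $v\mapsto\Psi\wedge\xi^{1,0}\wedge\xi^{0,1}\wedge v$ (up to a constant) kills every $v=\xi^{1,0}\wedge a$. This reflects the gauge freedom $v\mapsto v+\partial a$, so there is no injective symbol and no Fredholm theory to invoke.

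Your cokernel/Stokes remark is the correct annihilator condition. But solvability from it needs closed range, i.e.\ an a priori estimate. The phrase ``(ii) gives a coercive energy on the kernel bundle'' does not supply one: positivity of $q_\Omega$ on $P:=\ker(\Psi\wedge-)$ says nothing about forms outside $P$, and the forms you must control, $\mathrm{d}\mathrm{d}^c\gamma$, do not lie in $P$. Also, (i) does not let you solve $\Psi\wedge v=-\beta$ pointwise. It concerns $\Psi\wedge-$ on $\Lambda^{p,q}$, not on $\Lambda^{p-1,q-1}$; for $n=3$, $p=q=1$ this would require a surjection $\Lambda^{0,0}\to\Lambda^{2,2}$.

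The missing idea is as follows.
\begin{enumerate}
\item From (i), (ii), the splitting $\Lambda^{p,q}=P\oplus P^{\perp_{q_\Omega}}$ and compactness, one gets the pointwise inequality $\|\zeta\|^2\le C_1q_\Omega(\zeta,\zeta)+C_2\|\Psi\wedge\zeta\|^2$ for all $\zeta$. By conjugation it also holds in bidegree $(q,p)$.
\item Apply this to $\zeta=\mathrm{d}\mathrm{d}^c\gamma$ for test forms $\gamma$ of bidegree $(q-1,p-1)$ and integrate. The $q_\Omega$ term integrates to zero by Stokes, because $\mathrm{d}\Omega=0$ and $\mathrm{d}\mathrm{d}^c\gamma$ is exact. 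This gives $\|\mathrm{d}\mathrm{d}^c\gamma\|_{L^2}\le C\|\Psi\wedge\mathrm{d}\mathrm{d}^c\gamma\|_{L^2}$.
\item Using $\int_X\gamma\wedge\Psi\wedge u_0=\pm\int_X\mathrm{d}\mathrm{d}^c\gamma\wedge\beta$, the functional $\Psi\wedge\mathrm{d}\mathrm{d}^c\gamma\mapsto\int_X\gamma\wedge\Psi\wedge u_0$ is well defined and bounded.
\item Hahn--Banach and $L^2$ duality then give an $L^2$ form $v$ solving the equation weakly.
\end{enumerate}

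Regularity also needs a specific argument that you do not supply. Since $v$ is only $L^2$ and its equation is not elliptic, one works with $u$ instead. One shows that $u$ satisfies $\partial u=0$ and $\partial_h^*u=0$, for a metric $h$ that extends $q_\Omega|_P$; this uses $u\in P$ and $\partial\Omega=0$. Elliptic regularity for $\partial\partial_h^*+\partial_h^*\partial$ then gives smoothness. That your sketch never uses $\mathrm{d}\Omega=0$ in the analysis is a symptom of the missing estimate.
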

\begin{proof}
If $p=0$ or $q=0$, every closed representative $u$ is primitive by
degree, and $Q_\Omega(u,u)$ is its squared $L^2$ norm induced by
$q_\Omega$.
We therefore assume $p,q\ge1$.

Fix a smooth closed $(p,q)$-form $f$ with $[\Psi]\cup[f]=0$.
We seek a primitive representative $u=f-\mathrm{d}\mathrm{d}^cv$
by solving
\[
 \Psi\wedge\mathrm{d}\mathrm{d}^cv=\Psi\wedge f.
\]
For every smooth $(q-1,p-1)$-form $\gamma$, testing this equation
and integrating by parts gives the required identity
\[
 \int_X\bigl(\Psi\wedge\mathrm{d}\mathrm{d}^c\gamma\bigr)\wedge v
 =\int_X\gamma\wedge\Psi\wedge f.
\]
Let $\mathcal E$ be the linear subspace of
$L^2(X,\Lambda^{n-p+1,n-q+1}T^*X)$ consisting of the forms
$\Psi\wedge\mathrm{d}\mathrm{d}^c\gamma$, where $\gamma$ is a smooth
$(q-1,p-1)$-form. We consider the functional
$\ell\bigl(\Psi\wedge\mathrm{d}\mathrm{d}^c\gamma\bigr)
:=\int_X\gamma\wedge\Psi\wedge f$ on $\mathcal E$.
To show that it is well defined and bounded, we will prove that
there is a constant $C_f>0$ such that
\[
 \left|\int_X\gamma\wedge\Psi\wedge f\right|
 \le C_f\|\Psi\wedge\mathrm{d}\mathrm{d}^c\gamma\|_{L^2}
\]
for every smooth $(q-1,p-1)$-form $\gamma$.
Indeed, if $\Psi\wedge\mathrm{d}\mathrm{d}^c\gamma_1
=\Psi\wedge\mathrm{d}\mathrm{d}^c\gamma_2$, applying this bound to
$\gamma_1-\gamma_2$ gives
$\int_X(\gamma_1-\gamma_2)\wedge\Psi\wedge f=0$.
Thus the two choices give the same value of $\ell$, so $\ell$ is
well defined. The same bound gives its continuity.
Once it is proved, Hahn--Banach extends $\ell$ to a bounded linear
functional $\widetilde\ell$ on $L^2(X,\Lambda^{n-p+1,n-q+1}T^*X)$.
By $L^2$-duality, as in \cite[Proposition~3.1]{DN}, this extension
has the form $\widetilde\ell(\eta)=\int_X\eta\wedge v$ for a
$(p-1,q-1)$-form $v$ in $L^2$.
Its agreement with $\ell$ on $\mathcal E$ is exactly the tested
identity above, so $v$ solves the desired equation weakly.
We will then prove that $u$ is smooth by elliptic regularity.

We now establish the required estimate.
For the smooth bundle map
$\Psi\wedge-\colon\allowbreak\Lambda^{p,q}T^*X\to\allowbreak
\Lambda^{p+m,q+m}T^*X$,
fibrewise surjectivity follows from (i), so its rank is constant.
Its kernel $P$ is therefore a smooth subbundle of $\Lambda^{p,q}T^*X$.
By (ii), the restriction $q_\Omega|_P$ is positive definite, so
\[
 \Lambda^{p,q}T^*X=P\oplus P^{\perp_{q_\Omega}}.
\]
On the unit sphere bundle of $P^{\perp_{q_\Omega}}$, the continuous function
$z\mapsto\|\Psi\wedge z\|$ is strictly positive since
$P=\ker(\Psi\wedge-)$.
This sphere bundle is compact.
Consequently, the function attains a positive minimum $\delta$,
and the continuous function $z\mapsto|q_\Omega(z,z)|$ attains a
finite maximum $M$ on the same compact sphere bundle.
For nonzero $z\in P_x^{\perp_{q_\Omega}}$, we apply these bounds to
$z/\|z\|$ to obtain
\begin{equation}\label{sp:eq:complement-bounds}
 \|\Psi\wedge z\|\ge\delta\|z\|,
 \qquad |q_\Omega(z,z)|\le M\|z\|^2.
\end{equation}
On the other hand, (ii) and compactness of the unit sphere bundle
of $P$ give a uniform constant $a>0$ such that, for every $w\in P_x$,
\begin{equation}\label{sp:eq:primitive-lower-bound}
 q_\Omega(w,w)\ge a\|w\|^2.
\end{equation}
We now fix $x\in X$ and write any $\zeta\in\Lambda^{p,q}T_x^*X$ as
$\zeta=w+z$, with $w\in P_x$ and $z\in P_x^{\perp_{q_\Omega}}$.
The $q_\Omega$-orthogonality of the two summands and
$P_x=\ker(\Psi_x\wedge-)$ give
\begin{equation}\label{sp:eq:primitive-splitting}
 q_\Omega(\zeta,\zeta)=q_\Omega(w,w)+q_\Omega(z,z),
 \qquad \Psi\wedge \zeta=\Psi\wedge z.
\end{equation}
It follows that
\begin{equation}\label{sp:eq:pointwise-estimate}
\begin{aligned}
 \|\zeta\|^2
 &\le2\|w\|^2+2\|z\|^2\\
 &\stackrel{(1)}{\le}\frac{2}{a}q_\Omega(w,w)+2\|z\|^2\\
 &\stackrel{(2)}{=}\frac{2}{a}q_\Omega(\zeta,\zeta)-\frac{2}{a}q_\Omega(z,z)+2\|z\|^2\\
 &\le\frac{2}{a}q_\Omega(\zeta,\zeta)
      +2\left(1+\frac{M}{a}\right)\|z\|^2\\
 &\le C_1q_\Omega(\zeta,\zeta)+C_2\|\Psi\wedge \zeta\|^2,
\end{aligned}
\end{equation}
where (1) follows from \eqref{sp:eq:primitive-lower-bound}
and (2) from
\eqref{sp:eq:primitive-splitting}.
Here $C_1:=2/a$ and $C_2:=2(1+M/a)/\delta^2$.
Since $\Psi$ and $\Omega$ are real, conjugation preserves
the norms and satisfies
$q_\Omega^{q,p}(\xi,\xi)=q_\Omega^{p,q}(\bar\xi,\bar\xi)$.
Applying \eqref{sp:eq:pointwise-estimate} to $\bar\xi$ therefore gives the
same estimate for a $(q,p)$-form $\xi$.

To obtain an $L^2$ estimate, we choose a smooth $(q-1,p-1)$-form $\gamma$.
Since $\mathrm{d}\Omega=0$, Stokes' theorem gives
\[
 Q_\Omega^{q,p}(\mathrm{d}\mathrm{d}^c\gamma,\mathrm{d}\mathrm{d}^c\gamma)
 = (\sqrt{-1})^{\,q-p}(-1)^{(p+q)(p+q-1)/2}
 \int_X\mathrm{d}\mathrm{d}^c\gamma\wedge
 \overline{\mathrm{d}\mathrm{d}^c\gamma}\wedge\Omega=0.
\]
By \eqref{sp:eq:pointwise-estimate},
\[
 \|\mathrm{d}\mathrm{d}^c\gamma\|_{L^2}^2
 \le C_1Q_\Omega^{q,p}(\mathrm{d}\mathrm{d}^c\gamma,\mathrm{d}\mathrm{d}^c\gamma)
      +C_2\|\Psi\wedge\mathrm{d}\mathrm{d}^c\gamma\|_{L^2}^2
 =C_2\|\Psi\wedge\mathrm{d}\mathrm{d}^c\gamma\|_{L^2}^2,
\]
and thus
\[
 \|\mathrm{d}\mathrm{d}^c\gamma\|_{L^2}
 \le C\|\Psi\wedge\mathrm{d}\mathrm{d}^c\gamma\|_{L^2}.
\]
The $\mathrm{d}\mathrm{d}^c$-lemma gives a smooth $(n-q,n-p)$-form $g$
such that $\mathrm{d}\mathrm{d}^c g=\Psi\wedge f$.
For every smooth $(q-1,p-1)$-form $\gamma$, Stokes' theorem gives
\[
 \left|\int_X\gamma\wedge\Psi\wedge f\right|
 =\left|\int_X\mathrm{d}\mathrm{d}^c\gamma\wedge g\right|
 \le C\|g\|_{L^2}\|\Psi\wedge\mathrm{d}\mathrm{d}^c\gamma\|_{L^2}.
\]
This proves the required bound for $\ell$.
The construction above therefore gives a $(p-1,q-1)$-form $v$ in
$L^2$ satisfying $\Psi\wedge\mathrm{d}\mathrm{d}^cv=\Psi\wedge f$
in the weak sense.

We set $u:=f-\mathrm{d}\mathrm{d}^c v$, with derivatives understood
in the weak sense. Then $\partial u=\bar\partial u=0$ as $f$ is
$\mathrm{d}$-closed.
The weak equation above gives $\Psi\wedge u=0$, so $u$ is a
distributional section of $P$.
To prove that $u$ is smooth, we extend $q_\Omega|_P$ to a positive
Hermitian metric $h$ on $\Lambda^{p,q}T^*X$.
For $z=z_P+z_\perp$ and $w=w_P+w_\perp$ in
$P_x\oplus P_x^{\perp_{q_\Omega}}$, using the induced metric $h_0$, we define
\[
 h(z,w):=q_\Omega(z_P,w_P)+h_0(z_\perp,w_\perp).
\]
For $w\in P_x$, this gives
\begin{equation}\label{sp:eq:primitive-metric}
 h(z,w)=q_\Omega(z_P,w)=q_\Omega(z,w).
\end{equation}

\noindent
Let $\partial_h^*$ denote the formal adjoint of $\partial$, using
$h$ in bidegree $(p,q)$ and $h_0$ in every other bidegree.
Since $u$ is a distributional section of $P$,
\eqref{sp:eq:primitive-metric} and distributional integration by parts
give, for every smooth $(p-1,q)$-form $\beta$,
\[
\begin{aligned}
 \langle\partial\beta,u\rangle_{L^2(h)}
 &=\int_X h(\partial\beta,u)\,\mathrm{vol}
 =\int_X q_\Omega(\partial\beta,u)\,\mathrm{vol}\\
 &=(\sqrt{-1})^{\,p-q}(-1)^{(p+q)(p+q-1)/2}
   \int_X\partial\beta\wedge\bar u\wedge\Omega\\
 &=(\sqrt{-1})^{\,p-q}(-1)^{(p+q)(p+q-1)/2}
   \int_X\partial(\beta\wedge\bar u\wedge\Omega)=0,
\end{aligned}
\]
where we use $\partial\bar u=\overline{\bar\partial u}=0$ and
$\partial\Omega=0$.
By the definition of the formal adjoint, for every smooth
$(p-1,q)$-form $\beta$,
\[
 \langle\beta,\partial_h^*u\rangle_{L^2(h_0)}
 =\langle\partial\beta,u\rangle_{L^2(h)}=0.
\]
Hence $\partial_h^*u=0$ in the distributional sense.
Together with $\partial u=0$, it follows that
$(\partial\partial_h^*+\partial_h^*\partial)u=0$.
As in \cite[Proposition~3.2]{DN}, elliptic regularity for this
Laplacian implies that $u$ is smooth.

Finally, for every smooth closed $(n-p,n-q)$-form $\chi$, integration by parts gives
\[
 \int_X(u-f)\wedge\chi
 =-\int_Xv\wedge\mathrm{d}\mathrm{d}^c\chi=0.
\]
Poincar\'e duality therefore gives $[u]=[f]$.
For $[f]\ne0$, this smooth representative is nonzero, and (ii) gives
\[
 Q_{[\Omega]}([f],[f])=\int_Xq_\Omega(u,u)\,\mathrm{vol}>0.
\]
\end{proof}

\begin{theorem}\label{sp:thm:global}
Let $([\alpha],[\omega])$ be a cohomological supercritical LYZ pair on a compact
K\"ahler $n$-fold, $n\ge2$, and let
$\mathcal R:=\Rpair{[\alpha]}{[\omega]}$. For $p,q\ge0$ with
$1\le p+q\le n$ and $\min\{p,q\}\le1$, the following hold.
\begin{enumerate}[label=(\roman*)]
\item Multiplication by $c_{n-p-q}(\mathcal R)$ is an isomorphism
$H^{p,q}(X)\longrightarrow H^{n-q,n-p}(X)$.
\item The form $Q_{\mathcal R}^{p,q}$ is positive definite on
$\mathcal P^{p,q}(X,\mathcal R)$.
\item If $p,q\ge1$, there is a $Q_{\mathcal R}^{p,q}$-orthogonal decomposition
\[
 H^{p,q}(X)=\mathcal P^{p,q}(X,\mathcal R)\mathbin{\oplus^{\perp}}
 \mathsf L_{\mathcal R} H^{p-1,q-1}(X).
\]
If $p=0$ or $q=0$, the whole space is primitive.
\end{enumerate}
\end{theorem}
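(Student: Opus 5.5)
The plan is to combine the pointwise statements of Proposition~\ref{sp:prop:equality} with the global construction of Proposition~\ref{sp:prop:adjacent-global}, applied to the LYZ representatives $(\alpha,\omega)$. Here $c_j(\alpha,\omega)$ are closed real forms representing $c_j(\mathcal R)$. Fix $(p,q)$ with $\min\{p,q\}\le1$, $1\le p+q\le n$, and put $m:=n-p-q+1$. By conjugation, and since $\alpha,\omega$ are real, it suffices to treat $q\le1$. The case $q=0$ (or $p=0$) is essentially formal: every class is primitive by degree, and holomorphic forms are their unique harmonic representatives. Positivity of $Q$ on such a representative then follows from the pointwise positivity of $q_{\mathcal R}$ in Proposition~\ref{sp:prop:equality}(iii). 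Hard Lefschetz in this case follows because the pairing $Q$ is nondegenerate on $H^{p,0}$ and $\dim H^{p,0}=\dim H^{n,n-p}$ by Serre duality.

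For $q=1$, I would first prove (ii). Apply Proposition~\ref{sp:prop:adjacent-global} with $\Psi:=c_m(\alpha,\omega)$ and $\Omega:=c_{m-1}(\alpha,\omega)$. The pointwise hypotheses (i) and (ii) there are exactly parts (ii) and (iii) of Proposition~\ref{sp:prop:equality}. This yields positive definiteness of $Q^{p,1}_{\mathcal R}$ on $\mathcal P^{p,1}(X,\mathcal R)=\ker(c_m(\mathcal R)\cup-)$. Running the same argument with $\Psi:=c_{m-1}(\alpha,\omega)\wedge\kappa$ shows that $Q$ is also positive definite on $\ker(c_{m-1}(\mathcal R)[\kappa]\cup-)$. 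I will use this second kernel for Hard Lefschetz.

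To prove (i) for $q=1$, I would show that $c_{m-1}(\mathcal R)\cup-\colon H^{p,1}\to H^{n-1,n-p}$ is injective; equal dimensions by Serre duality then give bijectivity. Suppose $c_{m-1}(\mathcal R)\cup a=0$. Then $Q(a,b)=0$ for every $b$, and in particular $a$ lies in $\ker(c_{m-1}[\kappa]\cup-)$. Positivity on that kernel gives $Q(a,a)>0$ unless $a=0$, while $Q(a,a)=0$. Hence $a=0$. I would also check that the $\mathcal P^{0,0}$ and $H^{p-1,0}$ pieces behave; these are already covered by the $q=0$ case with $m$ replaced by $m+1$.

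For (iii), assume $p\ge1$. The map $\mathsf L_{\mathcal R}$ is defined by (i) and injects $H^{p-1,0}$, because $c_m\cup-$ is an isomorphism on $H^{p-1,0}$ by the $q=0$ case in bidegree $(p-1,0)$. Orthogonality follows from the definitions: for $a$ primitive and $b=\mathsf L_{\mathcal R}c$, we have $c_{m-1}b=c_m c$, so
\[
Q(a,b)=\pm\int a\wedge\bar c\wedge c_m=0,
\]
using $c_m a=0$. The sum is direct because $Q$ is positive definite on $\mathcal P$ and $\mathcal P\perp\operatorname{im}\mathsf L_{\mathcal R}$; therefore any element of the intersection is $Q$-isotropic and positive, hence zero. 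For the dimension count, the kernel of $c_m\cup-$ from $H^{p,1}$ to $H^{n,n-p+1}\cong\overline{H^{p-1,0}}^{\,*}$ has codimension at most $h^{p-1,0}$, which equals $\dim\operatorname{im}\mathsf L_{\mathcal R}$.

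The main obstacle I expect is the Hard Lefschetz step. It needs positivity on the auxiliary kernel $\ker(c_{m-1}\kappa)$ rather than the primitive kernel, because $c_{m-1}\cup a=0$ does not imply $c_m\cup a=0$. This is exactly why Proposition~\ref{sp:prop:equality} was set up with $\kappa$.
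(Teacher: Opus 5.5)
Your arguments for (i) and (ii) match the paper's. You apply Proposition~\ref{sp:prop:adjacent-global} to $(c_m,c_{m-1})$ and to $(c_{m-1}\wedge\kappa,c_{m-1})$, and you get injectivity of $c_{m-1}(\mathcal R)\cup-$ from positivity on the auxiliary kernel.

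The gap is in (iii). Your dimension count needs $\dim\operatorname{im}\mathsf L_{\mathcal R}=h^{p-1,0}$. You justify injectivity of $\mathsf L_{\mathcal R}$ by saying that $c_m\cup-$ is an isomorphism on $H^{p-1,0}$ ``by the $q=0$ case in bidegree $(p-1,0)$''. That case is about multiplication by $c_{n-p+1}(\mathcal R)=c_{m+1}(\mathcal R)$ into $H^{n,n-p+1}(X)$, not by $c_m(\mathcal R)$. The map $c_m(\mathcal R)\cup-$ sends $H^{p-1,0}(X)$ into $H^{n-1,n-p}(X)$, which has dimension $h^{p,1}$. So it is not an isomorphism in general, and nothing you have proved gives its injectivity. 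Without this, your count only gives $\dim\mathcal P^{p,1}+\dim\operatorname{im}\mathsf L_{\mathcal R}\le h^{p,1}$, and the decomposition does not follow.

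The step is true and easy to repair; here are three ways.
\begin{enumerate}
\item Direct injectivity. If $a\in H^{p-1,0}(X)$ satisfies $c_m(\mathcal R)\cup a=0$, then also $c_m(\mathcal R)[\kappa]\cup a=0$, so $Q^{p-1,0}_{c_m(\alpha,\omega)\wedge\kappa}(a,a)=0$. But $c_m(\alpha,\omega)\wedge\kappa$ is diagonal in the adapted coframe, with coefficients $\sum_{j\in L}(\lambda_j-\cot\theta)F_m(\lambda_{L\setminus\{j\}})>0$, as in the proof of Proposition~\ref{sp:prop:equality}. The $q=0$ pointwise computation then gives $Q^{p-1,0}_{c_m(\alpha,\omega)\wedge\kappa}(a,a)>0$ for holomorphic $a\ne0$, so $a=0$.
\item A transpose argument that needs no injectivity. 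Under Serre duality, $c_m\cup-\colon H^{p,1}\to H^{n,n-p+1}$ is the transpose of $c_m\cup-\colon H^{0,p-1}\to H^{n-p,n-1}$. After conjugation, the rank of the latter equals $\dim\operatorname{im}\mathsf L_{\mathcal R}$. Hence $\dim\mathcal P^{p,1}+\dim\operatorname{im}\mathsf L_{\mathcal R}=h^{p,1}$ exactly, which closes your count.
\item The paper's route. Part (i) makes $Q^{p,q}_{\mathcal R}$ nondegenerate, and Poincar\'e duality gives $(\operatorname{im}\mathsf L_{\mathcal R})^{\perp}=\ker(c_m(\mathcal R)\cup-)=\mathcal P^{p,q}(X,\mathcal R)$. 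Taking orthogonal complements identifies $\operatorname{im}\mathsf L_{\mathcal R}$ with $\mathcal P^{p,q}(X,\mathcal R)^{\perp}$, with no need for $\mathsf L_{\mathcal R}$ to be injective.
\end{enumerate}
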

\begin{proof}
Choose representatives $\alpha,\omega$ of $[\alpha],[\omega]$, respectively,
such that $(\alpha,\omega)$ is a supercritical LYZ pair.
Then we set $m:=n-p-q+1$ and
$\Omega:=c_{m-1}(\alpha,\omega)$, so
$[\Omega]=c_{m-1}(\mathcal R)$.

By Proposition~\ref{sp:prop:equality}, we first apply Proposition~\ref{sp:prop:adjacent-global} to
$(\Psi,\Omega)=(\Omega\wedge\kappa,\Omega)$. It gives positivity of
$Q_{[\Omega]}$ on the kernel of $[\Omega\wedge\kappa]$.
If $[\Omega]\cup a=0$, then $a$ belongs to the kernel of $[\Omega\wedge\kappa]$ and
$Q_{[\Omega]}(a,a)=0$, so $a=0$.
The source and target of $[\Omega]\cup-$ have equal dimensions
by Hodge symmetry. Hence the map is an
isomorphism, proving (i). By Proposition~\ref{sp:prop:equality}, we may apply Proposition~\ref{sp:prop:adjacent-global} again to $(c_m(\alpha,\omega),c_{m-1}(\alpha,\omega))$, which proves (ii).

For (iii), when either index is zero, the primitive map has zero target, so we may assume $p,q\ge1$. By (i), the form $Q_{\mathcal R}^{p,q}$ is
nondegenerate and $\mathsf L_{\mathcal R}$ is defined.
By (ii), its restriction to $\mathcal P^{p,q}(X,\mathcal R)$ is positive
definite. Hence this subspace meets its orthogonal complement only at zero,
and nondegeneracy of $Q_{\mathcal R}^{p,q}$ gives
\[
 H^{p,q}(X)=\mathcal P^{p,q}(X,\mathcal R)\mathbin{\oplus^{\perp}}
 \bigl(\mathcal P^{p,q}(X,\mathcal R)\bigr)^{\perp_{Q_{\mathcal R}}}.
\]
It remains to identify the second summand with
$\operatorname{im}\mathsf L_{\mathcal R}$.
For $a\in H^{p,q}(X)$, the defining identity of $\mathsf L_{\mathcal R}$
shows that $a\in(\operatorname{im}\mathsf L_{\mathcal R})^{\perp_{Q_{\mathcal R}}}$
if and only if
\[
 \int_X a\cup c_m(\mathcal R)\cup\bar y=0
 \qquad\text{for every }y\in H^{p-1,q-1}(X).
\]
By Poincar\'e duality, this is equivalent to $c_m(\mathcal R)\cup a=0$.
Hence
\[
 \bigl(\operatorname{im}\mathsf L_{\mathcal R}\bigr)^{\perp_{Q_{\mathcal R}}}
 =\ker\bigl(c_m(\mathcal R)\cup-\bigr)
 =\mathcal P^{p,q}(X,\mathcal R).
\]
Taking orthogonal complements gives
$\bigl(\mathcal P^{p,q}(X,\mathcal R)\bigr)^{\perp_{Q_{\mathcal R}}}
=\operatorname{im}\mathsf L_{\mathcal R}$, and this proves (iii).
\end{proof}

\Needspace{6\baselineskip}
Combining Theorem~\ref{sp:thm:global} with the classical
middle-degree relations gives the following consequence in dimensions
three and four.

\begin{corollary}\label{cor:low-dimensional-package}
Let $([\alpha],[\omega])$ be a cohomological supercritical LYZ pair on a compact
K\"ahler $n$-fold, where $n=3$ or $4$.
We set $\mathcal R:=\Rpair{[\alpha]}{[\omega]}$. For $p,q\ge0$, the following hold.
\begin{enumerate}[label=(\roman*)]
\item For $1\le p+q\le n$, multiplication by $c_{n-p-q}(\mathcal R)$
is an isomorphism
\[
 H^{p,q}(X)\longrightarrow H^{n-q,n-p}(X).
\]
\item For $1\le p+q\le n$, the form $Q_{\mathcal R}^{p,q}$ is positive
definite on $\mathcal P^{p,q}(X,\mathcal R)$.
\item For $0\le p+q\le n$, there is a Lefschetz decomposition
\[
 H^{p,q}(X)=\bigoplus_{r=0}^{\min\{p,q\}}
 \mathsf L_{\mathcal R}^{\,r}\mathcal P^{p-r,q-r}(X,\mathcal R).
\]
\end{enumerate}
\end{corollary}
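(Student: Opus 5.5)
The plan is to reduce everything to Theorem~\ref{sp:thm:global}, checking which bidegrees $(p,q)$ with $1\le p+q\le n$ have $\min\{p,q\}\ge2$. For $n=3$ there are none: $p,q\ge2$ forces $p+q\ge4>3$. So (i) and (ii) follow directly from Theorem~\ref{sp:thm:global}(i),(ii). For $n=4$ the only exceptional bidegree is $(p,q)=(2,2)$, where $p+q=n=4$. There, multiplication by $c_0(\mathcal R)=1$ is the identity $H^{2,2}\to H^{2,2}$, so Hard Lefschetz is trivial. The primitive space is $\ker(c_1(\mathcal R)\cup-\colon H^{2,2}\to H^{3,3})$ with $c_1(\mathcal R)=(n-1)[\kappa]$. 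The form is
\[
Q^{2,2}_{\mathcal R}(a,b)=(-1)^{6}\int_X a\cup\bar b=\int_X a\cup\bar b.
\]
This is the classical middle-degree relation for the single class $[\kappa]$. By the Corollary following Theorem~\ref{thm:analytic-cone-inclusions}, $c_1(\mathcal R)=(n-1)[\kappa]$ with $[\kappa]$ Kähler, because $[\kappa]$ is represented by $\kappa=\alpha-\cot\theta\,\omega$, which is positive since each $\phi_i<\theta$ gives $\lambda_i>\cot\theta$. The classical Hodge--Riemann bilinear relations for the Kähler class $[\kappa]$ in bidegree $(2,2)$ on a $4$-fold give positivity of $\int_X a\cup\bar a$ on $[\kappa]$-primitive classes, with the sign $(\sqrt{-1})^{0}(-1)^{4\cdot3/2}=1$. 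This settles (ii).

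For (iii), the case $\min\{p,q\}=0$ is trivial, since the whole space is primitive. When $\min\{p,q\}=1$, Theorem~\ref{sp:thm:global}(iii) gives $H^{p,q}=\mathcal P^{p,q}\oplus\mathsf L_{\mathcal R}H^{p-1,q-1}$, and $H^{p-1,q-1}$ has a zero index, so it is entirely primitive. This yields the full decomposition. I will need $\mathsf L_{\mathcal R}$ to be injective on $H^{p-1,q-1}$ so that the sum is direct with the expected summands. Injectivity holds because $c_m(\mathcal R)\cup-$ on $H^{p-1,q-1}$ is Hard Lefschetz in bidegree $(p-1,q-1)$, where $p-1+q-1=n-m$, which is part (i).

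The remaining case is $(2,2)$ on a $4$-fold, where $m=n-p-q+1=1$, $c_{m-1}=c_0=1$, and $\mathsf L_{\mathcal R}=c_1(\mathcal R)\cup-$ is cup product with $3[\kappa]$. The decomposition
\[
H^{2,2}=\mathcal P^{2,2}\oplus[\kappa]\mathcal P^{1,1}\oplus[\kappa]^2H^{0,0}
\]
must then be compared with the paper's $\mathcal P^{1,1}(X,\mathcal R)=\ker(c_2(\mathcal R)\cup-\colon H^{1,1}\to H^{4,4})$ and $\mathsf L_{\mathcal R}$ on $H^{1,1}$, which is defined using bidegree $(1,1)$ with $m=3$ and not by $[\kappa]$. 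This mismatch is the main obstacle: the operator $\mathsf L_{\mathcal R}$ depends on the bidegree, so iterating it mixes $(c_2)^{-1}c_3$ with $c_1$. I would argue abstractly instead. By (ii) the primitive part $\mathcal P^{2,2}$ is $Q$-positive and $Q$ is nondegenerate, so $H^{2,2}=\mathcal P^{2,2}\oplus(\mathcal P^{2,2})^{\perp}$. As in the proof of Theorem~\ref{sp:thm:global}(iii), $(\mathcal P^{2,2})^\perp=\operatorname{im}(c_1(\mathcal R)\cup-)$ on $H^{1,1}$. Then $H^{1,1}=\mathcal P^{1,1}\oplus\mathsf L_{\mathcal R}H^{0,0}$ by Theorem~\ref{sp:thm:global}(iii), and $c_1(\mathcal R)\cup-$ is injective on $H^{1,1}$ by Hard Lefschetz for the Kähler class $[\kappa]$. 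Applying $c_1(\mathcal R)\cup-$ to this splitting gives the three-term direct sum, where the outer $\mathsf L_{\mathcal R}$ is $c_1\cup-$ and the inner one is the $(1,1)$-level operator, matching the notation $\mathsf L^{\,r}_{\mathcal R}$ read degree by degree. I expect the only care needed is this interpretation of $\mathsf L^{\,2}_{\mathcal R}$, together with verifying that the $(1,1)$-level summands are indeed direct after applying $c_1\cup-$, which holds by its injectivity.
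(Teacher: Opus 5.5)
Your proposal is correct and follows the paper's proof. Theorem~\ref{sp:thm:global} covers every bidegree except $(2,2)$ on a fourfold. There, $c_0(\mathcal R)=1$ and $c_1(\mathcal R)=3[\kappa]$ with $[\kappa]$ K\"ahler reduce (i)--(ii) to the classical relations. Part (iii) then follows from $H^{2,2}(X)=\mathcal P^{2,2}(X,\mathcal R)\oplus c_1(\mathcal R)H^{1,1}(X)$ by substituting the $(1,1)$-decomposition and using injectivity of $c_1(\mathcal R)\cup-$ on $H^{1,1}(X)$.

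One small slip concerns your justification of injectivity of $\mathsf L_{\mathcal R}$ on $H^{p-1,q-1}(X)$ when $\min\{p,q\}=1$. Since $(p-1)+(q-1)=n-m-1$, Hard Lefschetz in that bidegree concerns $c_{m+1}(\mathcal R)$, not $c_m(\mathcal R)$. This injectivity is not needed, however: Theorem~\ref{sp:thm:global}(iii) already gives the direct sum, and $H^{p-1,q-1}(X)=\mathcal P^{p-1,q-1}(X,\mathcal R)$ in that case.
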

\begin{proof}
Theorem~\ref{sp:thm:global} proves (i) and (ii) except in bidegree
$(p,q)=(2,2)$ when $n=4$.
In this remaining case, we have $c_0(\mathcal R)=1$, and the class $c_1(\mathcal R)=3[\kappa]$ is K\"ahler.
Thus the Lefschetz map is the identity, while the primitive
space and pairing are those of the classical Hodge--Riemann
relations. Moreover, $\mathsf L_{\mathcal R}=c_1(\mathcal R)\cup-$, so classical
Lefschetz decomposition gives
\[
 H^{2,2}(X)=\mathcal P^{2,2}(X,\mathcal R)\mathbin{\oplus^{\perp}}
 \mathsf L_{\mathcal R} H^{1,1}(X).
\]
Substituting the decomposition of $H^{1,1}(X)$ from
Theorem~\ref{sp:thm:global}(iii) gives (iii), since multiplication
by $c_1(\mathcal R)$ is injective on $H^{1,1}(X)$ by classical Hard Lefschetz.
In degree zero, the decomposition is
$H^{0,0}(X)=\mathcal P^{0,0}(X,\mathcal R)$ by definition.
\end{proof}

These low-dimensional cases suggest that the same package should
hold in arbitrary dimension.

\begin{conjecture}\label{sp:conj:full-package}
Let $([\alpha],[\omega])$ be a cohomological supercritical LYZ pair on a compact
K\"ahler $n$-fold, $n\ge2$, and let
$\mathcal R:=\Rpair{[\alpha]}{[\omega]}$. For $p,q\ge0$, the following hold.
\begin{enumerate}[label=(\roman*)]
\item For $1\le p+q\le n$, multiplication by $c_{n-p-q}(\mathcal R)$
is an isomorphism
\[
 H^{p,q}(X)\longrightarrow H^{n-q,n-p}(X).
\]
\item For $1\le p+q\le n$, the form $Q_{\mathcal R}^{p,q}$ is positive
definite on $\mathcal P^{p,q}(X,\mathcal R)$.
\item For $0\le p+q\le n$, there is a Lefschetz decomposition
\[
 H^{p,q}(X)=\bigoplus_{r=0}^{\min\{p,q\}}
 \mathsf L_{\mathcal R}^{\,r}\mathcal P^{p-r,q-r}(X,\mathcal R).
\]
\end{enumerate}
\end{conjecture}

\begin{remark}
For every dimension $n\ge2$, we can prove Conjecture~\ref{sp:conj:full-package}
when the phase is sufficiently small. Work on the general case is
in progress.
\end{remark}

\appendix
\section{Toric data for the Fujino--Sato example}\label{app:FS-data}
We give the data used in Section~\ref{subsec:strict-inclusion}.
The variety and the ray numbering are those of
\cite[Example~4.2]{FS}.

\subsection*{Fan and divisor classes}
The primitive ray generators are
\[
\begin{alignedat}{2}
 v_1&=(1,0,0),       &\qquad v_5&=(1,-1,-2),\\
 v_2&=(0,1,0),       &\qquad v_6&=(1,0,-1),\\
 v_3&=(0,0,1),       &\qquad v_7&=(0,-1,-2),\\
 v_4&=(-1,-1,-1),    &\qquad v_8&=(0,0,-1).
\end{alignedat}
\]
We write $ijk$ for $\Cone(v_i,v_j,v_k)$. The maximal cones are
\[
\begin{array}{rrrrrr}
 123,&126,&134,&145,&156,&234,\\
 248,&256,&258,&457,&478,&578.
\end{array}
\]
They arise from the successive subdivisions in
\cite[Example~4.2]{FS}. The linear divisor relations are
\[
\begin{aligned}
 D_1-D_4+D_5+D_6&=0,\\
 D_2-D_4-D_5-D_7&=0,\\
 D_3-D_4-2D_5-D_6-2D_7-D_8&=0.
\end{aligned}
\]
Together with the Stanley--Reisner relations and
$\deg(D_iD_jD_k)=1$ for every maximal cone $ijk$, these determine
the Chow ring and its intersection numbers
\cite[Theorem~12.5.3 and~(12.5.9)]{CLS}.

The following table gives the nef basis and the expressions of the
pseudoeffective generators in that basis.
\begingroup
\renewcommand{\arraystretch}{1.35}
\begin{equation}\label{eq:FS-psef-basis}
\begin{array}{@{}l@{\qquad}l@{}}
 \text{Nef generators} & \text{Pseudoeffective generators}\\[2pt]
 N_1=D_3              & D_1=-2N_1+N_2\\
 N_2=D_1+2D_3         & D_5=N_1+N_2-N_3+N_4\\
 N_3=2D_1+4D_3+D_6    & D_6=-2N_2+N_3\\
 N_4=D_3+D_4          & D_7=-2N_4+N_5\\
 N_5=2D_3+2D_4+D_7    & D_8=N_3+N_4-2N_5
\end{array}
\end{equation}
\endgroup
The nef cone is computed from invariant curves using the wall
relations \cite[Proposition~6.4.4]{CLS}, the primitive relations in
\cite[Example~4.2]{FS}, and the nef-cone criterion
\cite[Theorem~6.3.12]{CLS}. By \cite[Lemma~15.1.8]{CLS}, the
pseudoeffective cone is generated by the invariant prime divisor classes.
Together these computations give \eqref{eq:FS-cones}.

\subsection*{Intersection numbers}
The Chow ring gives the following intersection matrix.

\begingroup
\renewcommand{\arraystretch}{1.35}
\setlength{\arraycolsep}{8pt}
\begin{equation}\label{eq:N4-intersection-matrix}
 \bigl(N_iN_jN_4\bigr)_{i,j}=
 \begin{pmatrix}
 2&6&12&4&8\\
 6&15&30&10&20\\
 12&30&60&20&40\\
 4&10&20&6&13\\
 8&20&40&13&26
 \end{pmatrix}.
\end{equation}
\endgroup

We obtain the following pairings of $\Gamma_*$ and $KM$ with the
pseudoeffective generators.
\begingroup
\renewcommand{\arraystretch}{1.4}
\setlength{\arraycolsep}{10pt}
\begin{equation}\label{eq:FS-exact-certificate}
\begin{array}{c@{\qquad}rrrrr}
          &D_1&D_5&D_6&D_7&D_8\\ \hline
 \Gamma_* &2&0&0&1&6\\
 KM       &19&12&2&1&19
\end{array}
\end{equation}
\endgroup

The next table lists the values of $M^2$ on the nef generators.

\begingroup
\renewcommand{\arraystretch}{1.4}
\setlength{\arraycolsep}{10pt}
\begin{equation}\label{eq:FS-M-squares}
\begin{array}{c@{\qquad}rrrrr}
      &N_1&N_2&N_3&N_4&N_5\\ \hline
 M^2  &25&41&86&0&1
\end{array}
\end{equation}
\endgroup

The remaining triple intersections are recorded below.
\begingroup
\renewcommand{\arraystretch}{1.4}
\setlength{\arraycolsep}{16pt}
\begin{equation}\label{eq:FS-triple-values}
\begin{array}{ccc}
 MK^2 & N_4^2K & D_8N_4^2\\ \hline
 959 &53 &0
\end{array}
\end{equation}
\endgroup

\end{document}